\def\nicecolourscheme{\shadedraw[top color=blue!0, bottom color=blue!20]}
\def\nicecolourschemedash{\shadedraw[top color=blue!0, bottom color=blue!20, draw=blue!50!black, dashed]}
\newtheorem{theorem}{Theorem}[section]
\newtheorem{proposition}[theorem]{Proposition}
\newtheorem{lemma}[theorem]{Lemma}
\newtheorem{corollary}[theorem]{Corollary}
\newtheorem*{theoremn}{Theorem}
\theoremstyle{definition}
\newtheorem{definition}[theorem]{Definition}
\newtheorem{example}[theorem]{Example}
\newtheorem{remark}[theorem]{Remark}
\numberwithin{equation}{section}
\def\res{\operatorname{Res}}
\def\sg{\operatorname{sg}}
\def\Inj{\operatorname{Inj}}
\def\Proj{\operatorname{Proj}}
\def\Ker{\operatorname{Ker}}
\def\Im{\operatorname{Im}}
\def\can{\operatorname{can}}
\def\HH{\operatorname{HH}}
\def\K{\mathbf{K}}
\def\sing{\operatorname{Sg}}
\def\Hom{\operatorname{Hom}}
\def\uHom{\underline{\Hom}}
\def\Modd{\operatorname{Mod}}
\def\Ker{\operatorname{Ker}}
\def\straightK{\operatorname{K}}
\def\straightC{\operatorname{C}}
\def\holim{\operatorname{hocolim}}
\DeclareMathOperator{\Flat}{Flat}
\DeclareMathOperator{\qc}{qc}
\DeclareMathOperator{\Qco}{Qco}
\DeclareMathOperator{\Tr}{Tr}
\DeclareMathOperator{\End}{End}
\DeclareMathOperator{\skos}{K}
\DeclareMathOperator{\tr}{tr}
\DeclareMathOperator{\ch}{ch}
\DeclareMathOperator{\str}{str}
\DeclareMathOperator{\hmf}{hmf}
\DeclareMathOperator{\HMF}{HMF}
\DeclareMathOperator{\hf}{HF}
\DeclareMathOperator{\At}{At}
\begin{document}

\def\dbsing{\mathbf{D}_{\sing}}
\def\dbsinge{\mathbf{D}'_{\sing}}
\def\dcpx{D}
\def\trans{\tau}
\def\Res{\res\!}
\newcommand{\Ress}[1]{\res_{#1}\!}
\newcommand{\kosz}[1]{E[#1]}
\newcommand{\ud}{\mathrm{d}}
\newcommand{\kflat}[1]{\K(\Flat #1)}
\newcommand{\kflatl}[2]{\K_{#1}(\Flat #2)}
\newcommand{\koszul}[2]{\operatorname{E}[#1]}
\newcommand{\ukoszul}[2]{\straightK[#1]}
\newcommand{\cech}[2]{\check{\straightC}[#1]}
\newcommand{\cat}[1]{\mathcal{#1}}
\newcommand{\qder}[1]{\mathbf{D}(#1)}
\newcommand{\qderl}[2]{\mathbf{D}_{#1}(#2)}
\newcommand{\qderu}[2]{\mathbf{D}^{#1}(#2)}
\newcommand{\lto}{\longrightarrow}
\newcommand{\xlto}[1]{\stackrel{#1}\lto}
\newcommand{\mf}[1]{\mathfrak{#1}}
\newcommand{\md}[1]{\mathscr{#1}}
\newcommand{\kinj}[1]{\K(\Inj #1)}
\newcommand{\kinju}[2]{\K^{#1}(\Inj #2)}
\newcommand{\kinjl}[2]{\K_{#1}(\Inj #2)}
\newcommand{\kprof}[1]{\mathbf{N}(\Flat #1)}
\newcommand{\kprofu}[2]{\mathbf{N}^{#1}(\Flat #2)}
\newcommand{\kprofl}[2]{\mathbf{N}_{#1}(\Flat #2)}
\newcommand{\kproful}[3]{\mathbf{N}^{#1}_{#2}(\Flat #3)}
\newcommand{\homd}[2]{[#1,#2]_{\mathbb{D}}}
\newcommand{\homk}[2]{[#1,#2]_{\mathbb{K}}}
\newcommand{\homn}[2]{[#1,#2]_{\mathbb{N}}}
\newcommand{\lift}[1]{\dot{#1}}
\newcommand{\dlift}[1]{\ddot{#1}}
\def\qhom{\mathscr{H}\! om_{\qc}}
\def\rdev{\mathbb{R}}
\def\dual{*}
\def\uotimes{\underaccent{\;\;\:=}{\otimes}}
\newcommand{\qcmod}[1]{\Qco(#1)}
\def\shom{\operatorname{Hom}}
\def\l{\,|\,}
\def\sgn{\textup{sgn}}
\def\nablagr{\nabla_{\operatorname{gr}}}
\newcommand{\kproj}[1]{\K(\Proj #1)}
\newcommand{\kprojl}[2]{\K_{#1}(\Proj #2)}
\newcommand{\kproju}[2]{\K^{#1}(\Proj #2)}
\newcommand{\kprojul}[3]{\K^{#1}_{#2}(\Proj #3)}
\def\dpart{\mathbb{D}}
\def\homot{\simeq}
\def\ceta{\vartheta}
\def\ctr{\Tr^{\sg}}
\def\cf{\bold{cf}}
\def\bx{\bold{x}}
\def\by{\bold{y}}
\def\totimes{\otimes}
\def\di{Q}
\def\rint{R_{int}}
\def\rext{R_{ext}}
\def\intvar{n}
\def\idem{e}
\newcommand{\cotimes}[1]{\,\widehat{\otimes}_{#1}\,}

\title{Pushing forward matrix factorisations}
\author{Tobias Dyckerhoff}
\email{tobias.dyckerhoff@yale.edu}
\address{Department of Mathematics, Yale University}

\author{Daniel Murfet}
\email{daniel.murfet@math.ucla.edu}
\address{Department of Mathematics, UCLA}

\begin{abstract}
We describe the pushforward of a matrix factorisation along a ring morphism in terms of an idempotent defined using relative Atiyah classes, and use this construction to study the convolution of kernels defining integral functors between categories of matrix factorisations. We give an elementary proof of a formula for the Chern character of the convolution generalising the Hirzebruch-Riemann-Roch formula of Polishchuk and Vaintrob.
\end{abstract}

\maketitle

\section{Introduction}

A \emph{linear factorisation} of an element $W$ in a ring $R$ (all our rings are commutative) is a $\mathbb{Z}/2$-graded $R$-module $X$ together with an odd $R$-linear map $d: X \lto X$, called the differential, which squares to $W \cdot 1_X$. The pair $(X,d)$ is called a \emph{matrix factorisation} if each $X^i$ is free over $R$, and a \emph{finite rank matrix factorisation} if each $X^i$ is free of finite rank. We write $\HMF(R, W)$ for the homotopy category of matrix factorisations and $\hmf(R,W)$ for the full subcategory of finite rank matrix factorisations. These objects were introduced by Eisenbud \cite{eisenbud} as a way of representing a special class of modules over local rings of hypersurface singularities, and they have played an important role in singularity theory and more recently in homological mirror symmetry and string theory. Because of this latter connection $W$ is now often referred to as the potential.

Suppose that $\varphi: S \lto R$ is a ring morphism and that $W = \varphi(V)$. Viewing $X$ as an $S$-module via $\varphi$ we obtain a $\mathbb{Z}/2$-graded $S$-module $\varphi_*(X)$ with a differential squaring to $V \cdot 1_{\varphi_*(X)}$, and we call this linear factorisation of $V$ over $S$ the \emph{pushforward} along $\varphi$. In this paper we study the pushforward of finite rank matrix factorisations along ring morphisms. Even when $X$ is finite rank the pushforward is typically not a finitely generated $S$-module, but we can ask: when is $\varphi_*(X)$ homotopy equivalent to a finite rank matrix factorisation, and can such a finite model for the pushforward be described concretely? We refer to this as the \emph{construction of finite pushforwards.}

Before stating our results we want to mention some interesting examples of pushforwards. Our first motivation was to understand the composition of integral functors between categories of matrix factorisations; let us begin with a special case, which involves the Hirzebruch-Riemann-Roch theorem for matrix factorisations.
\vspace{0.2cm}

\textbf{Hirzebruch-Riemann-Roch.} Let $k$ be a field of characteristic zero, $W \in R = k\llbracket x_1,\ldots,x_n \rrbracket$ a polynomial and $X,Y$ two finite rank matrix factorisations of $W$ over $R$. The $R$-module $\Hom_R(X,Y)$ has a natural $\mathbb{Z}/2$-grading and a differential making it into a matrix factorisation of zero and if the zero locus of $W$ has an isolated singularity at the origin then $\Hom_R(X,Y)$ has finite-dimensional cohomology over $k$. One then defines the Euler characteristic
\[
\chi(\Hom_R(X,Y)) = \dim_k H^0 \Hom_R(X,Y) - \dim_k H^1 \Hom_R(X,Y).
\]
The Hirzebruch-Riemann-Roch theorem for matrix factorisations, recently proven by Polishchuk and Vaintrob \cite{polishchuk}, expresses this Euler characteristic in terms of the Chern characters of $X$ and $Y$ using the general Riemann-Roch theorem for dg-algebras of Shklyarov \cite{shklyarov}. Our point of view is that pushing $\Hom_R(X,Y)$ forward along $k \lto R$ yields an infinite rank matrix factorisation of zero over $k$, which is nonetheless homotopy equivalent to a finite rank matrix factorisation: namely, its cohomology viewed a finite-dimensional $\mathbb{Z}/2$-graded vector space.

The Polishchuk-Vaintrob theorem tells us something about this finite model for the pushforward, as it expresses the difference between the rank in even and odd degrees in terms of invariants of $X$ and $Y$, and this gives us some idea of what to expect from a general theorem on pushforwards.
\vspace{0.2cm}

\textbf{Convolution.} We can view the above example as a special case of the pushforward that arises when we compose integral functors between homotopy categories of matrix factorisations. Consider polynomial rings\footnote{One can of course work with integral functors defined over other rings, for example power series rings, but there are some subtleties we want to avoid here; see \S\ref{section:convolution_powerseries}.} $k[\bold{x}] = k[x_1,\ldots,x_n]$ and $k[\bold{y}] = k[y_1,\ldots,y_m]$ and $W \in k[\bold{x}], V \in k[\bold{y}]$. We view a matrix factorisation $E \in \hmf(k[\bold{x},\bold{y}], V- W)$ as the defining \emph{kernel} of an integral functor
\begin{gather*}
\Phi_E: \HMF(k[\bold{x}],W) \lto \HMF(k[\bold{y}],V),\\
\Phi_E(X) = \varphi_*( E \otimes_{k[\bold{x}]} X )
\end{gather*}
where $\varphi: k[\bold{y}] \lto k[\bold{x},\bold{y}]$ is the canonical ring morphism. Here the tensor product of two matrix factorisations \cite{yoshino98} is a matrix factorisation of the \emph{sum} of their potentials; in particular, $E \otimes_{k[\bold{x}]} X$ is a matrix factorisation of $V$ over $k[\bold{x},\bold{y}]$ and the definition of $\Phi_E$ makes sense.

Certainly a nonzero finite rank free module over $k[\bold{x},\bold{y}]$ is not of finite rank over $k[\bold{y}]$. Nonetheless, if the critical locus of $W$ in $\mathbb{A}^n_k$ is equal to the origin and $X$ is free of finite rank, then it is known that the pushforward $\Phi_E(X)$ is a direct summand in $\HMF(k[\bold{y}], V)$ of a finite rank matrix factorisation. The reader can find proofs in \cite[\S 4.2]{brunnerdefect}, \cite[Prop. 13]{khovanov} and Theorem \ref{theorem:defectfusion} below, but the statement is probably best understood in To\"en's framework of derived Morita theory of dg categories \cite{toen.morita} where it follows from the smoothness and properness statements established in \cite{dyck4} for dg categories of matrix factorisations. In this situation we address the problem of effectively calculating this summand, that is, the problem of understanding $\Phi_E$ as a functor between the idempotent completions of the subcategories of finite rank matrix factorisations.

More generally, if $k[\bold{z}] = k[z_1,\ldots,z_p]$ and we take a potential $U \in k[\bold{z}]$, we can consider a second kernel $F \in \hmf(k[\bold{y},\bold{z}], U - V)$. With $\psi: k[\bold{x},\bold{z}] \lto k[\bold{x},\bold{y},\bold{z}]$ canonical, the composite $\Phi_F \circ \Phi_E$ is the integral functor with kernel $F \star E = \psi_*( F \otimes_{k[\bold{y}]} E )$, as depicted in the diagram
\begin{equation}\label{eq:intro_pushforward_psi}
\xymatrix@C+1pc{
\HMF(k[\bold{x}],W) \ar@/_2pc/[rr]_{\Phi_{F \star E}}\ar[r]^{\Phi_E} & \HMF(k[\bold{y}],V) \ar[r]^{\Phi_F} & \HMF(k[\bold{z}],U)
}
\end{equation}
which commutes up to natural isomorphism. The infinite rank matrix factorisation $F \star E$ is called the \emph{convolution} of $E$ and $F$ and, if the critical locus of $V$ in $\mathbb{A}^m_k$ is equal to the origin, this convolution is a direct summand in $\HMF(k[\bold{x},\bold{z}], U - W)$ of a finite rank matrix factorisation. In order to understand the operation of convolution, we are therefore led to a study of pushforwards. 

There are various reasons why one might be interested in the convolution. To mention just two, convolution is the mathematical avatar of the fusion of defects in Landau-Ginzburg models, and it is also the basic ingredient in the construction of Khovanov and Rozansky's link homology \cite{khovanov}. We discuss these connections in more detail at the end of this introduction.
\vspace{0.2cm}

In the above examples the matrix factorisation to be pushed forward is acted on null-homotopically by a regular sequence in the ring of variables ``integrated out'' by the pushforward (the $\bold{y}$-variables, in the case of convolution) and there is a natural choice of null-homotopy contracting the action of each polynomial in this regular sequence. This motivates the setting for our main theorem.
\\

\textbf{Results.} Let $\varphi: S \lto R$ be a ring morphsim with $S$ a $\mathbb{Q}$-algebra, and let $W \in S$ be a potential. Writing $W$ for $\varphi(W)$ we let $(X,d_X)$ be a finite rank matrix factorisation of $W$ over $R$. Suppose that there is a quasi-regular sequence $\bold{t} = \{t_1,\ldots,t_n\}$ in $R$ with each $t_i$ acting null-homotopically on $X$, such that $R/\bold{t} R$ is a finite rank free $S$-module\footnote{A regular sequence is quasi-regular, but over arbitrary rings the latter condition is better behaved; see \S\ref{section:fundamentals}.}. With the natural $\mathbb{Z}/2$-grading and the differential inherited from $X$ the tensor product
\[
X/\bold{t} X = X \otimes_R R/\bold{t} R
\]
is a finite rank matrix factorisation of $W$ over $S$ and we prove that $\varphi_*(X)[n]$ is a direct summand of $X/\bold{t}X$ in the homotopy category of linear factorisations of $W$ over $S$. When $\hmf(S,W)$ is idempotent complete (e.g. $S = k\llbracket \bold{x}\rrbracket$ and $W$ is a polynomial with an isolated singularity at the origin) it follows that the pushforward is homotopy equivalent to a finite rank matrix factorisation. Our construction of this finite pushforward is not direct, in the sense that we do not give a formula for the differential on a finite rank matrix factorisation homotopy equivalent to $\varphi_*(X)$. Instead, we give an explicit idempotent on $X/\bold{t} X$ corresponding to the direct summand $\varphi_*(X)[n]$.

We construct this idempotent using a kind of differential calculus in the homotopy category of matrix factorisations, with the relative Atiyah class playing the role of the differential. To introduce the Atiyah class, let $S[\bold{t}] = S[t_1,\ldots,t_n]$ be the formal polynomial ring in the $t_i$ and make $R$ into a $S[\bold{t}]$-algebra in the obvious way. We view $X$ as a linear factorisation of $W$ over $S[\bold{t}]$ and, passing to the $\bold{t}R$-adic completion of $R$ if necessary (this does not affect the pushforward), we may assume that as an $S[\bold{t}]$-module $X$ admits a connection, i.e. a $S$-linear degree zero map of $\mathbb{Z}/2$-graded modules
\[
\nabla: X \lto X \otimes_{S[\bold{t}]} \Omega^1_{S[\bold{t}]/S}
\]
satisfying the Leibniz rule. We equip $X \otimes_{S[\bold{t}]} \Omega^1_{S[\bold{t}]/S}$ with the differential $d_X \otimes 1$. Although $\nabla$ is only $S$-linear the commutator $[d_X, \nabla]$ is $S[\bold{t}]$-linear, and it is easy to see that this is in fact a morphism of linear factorisations of $W$ over $S[\bold{t}]$
\[
\At_{S[\bold{t}]/S}(X) = [d_X, \nabla]: X \lto X \otimes_{S[\bold{t}]} \Omega^1_{S[\bold{t}]/S}[1].
\]
Up to homotopy this morphism is independent of the choice of connection, and we call it the \emph{Atiyah class} of $X$ relative to $S[\bold{t}]/S$. One should think of the Atiyah class as a differential $\ud_{S[\bold{t}]/S} X$. Indeed, if $R$ is free of finite rank over $S[\bold{t}]$ (which often happens in examples) then the Atiyah class is, as a matrix of $1$-forms, the entrywise differential $-\ud_{S[\bold{t}]/S}(d_X)$ of $d_X$ as a matrix over $S[\bold{t}]$.

Iterating the Atiyah class produces morphisms
\[
\At_{S[\bold{t}]/S}(X)^k: X \lto X \otimes_{S[\bold{t}]} \Omega^k_{S[\bold{t}]/S}[k],
\]
and if we trivialise $\Omega^n_{S[\bold{t}]/S}$ using the $n$-form $\ud t_1 \wedge \cdots \wedge \ud t_n$ then $\At_{S[\bold{t}]/S}(X)^n$ is a degree $n$ endomorphism of $X$, which we view as an $S$-linear endomorphism of $X/\bold{t} X$. By assumption the $t_i$ act null-homotopically on $X$, so there exists for each $i$ an odd $R$-linear map $\lambda_i$ with $\lambda_i d_X + d_X \lambda_i = t_i \cdot 1_X$. Each of these homotopies defines an odd endomorphism of $X/\bold{t} X$ over $S$, and so
\[
e = \frac{1}{n!} (-1)^{\binom{n+1}{2}}\lambda_1 \cdots \lambda_n \circ \At_{S[\bold{t}]/S}(X)^n
\]
is a well-defined $S$-linear endomorphism of the matrix factorisation $X/\bold{t} X$.

\begin{theoremn} The $n$-fold suspension of $\varphi_*(X)$ is a direct summand of $X/\bold{t} X$ in the homotopy category of linear factorisations of $W$ over $S$, and the associated idempotent is $e$. That is, there are morphisms in the homotopy category
\begin{equation}\label{eq:theorem_main_intro}
\xymatrix@+3pc{
X/\bold{t} X \ar@<-0.8ex>[r]_(0.5){\psi} & \varphi_*(X)[n] \ar@<-0.8ex>[l]_(0.5){\vartheta}
}
\end{equation}
such that $\psi \circ \vartheta = 1$ and $\vartheta \circ \psi = e$.
\end{theoremn}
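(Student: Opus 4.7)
\emph{Proof plan.} The plan is to realise both maps through a Koszul-type bicomplex on which the Atiyah class and the null-homotopies assemble into explicit operators. Write $T = S[\bold{t}]$ and let $K = \Lambda^\bullet_T(T\xi_1 \oplus \cdots \oplus T\xi_n)$ be the Koszul complex with $\partial\xi_i = t_i$, $\mathbb{Z}/2$-folded and regarded as a linear factorisation of zero over $T$. Quasi-regularity of $\bold{t}$ in $R$ together with $S$-freeness of $R/\bold{t}R$ imply that $R \otimes_T K$ is a finite rank free $R$-resolution of $R/\bold{t}R$, so the augmentation
\[
\alpha: X \otimes_T K \lto X/\bold{t}X
\]
is a homotopy equivalence of linear factorisations of $W$ over $S$. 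Inside $X \otimes_T K$ sit the bottom summand $X \otimes \Lambda^0 \cong \varphi_*(X)$ and, via the trivialisation $\Lambda^n \cong T\cdot\xi_1\wedge\cdots\wedge\xi_n$, a top summand isomorphic to $\varphi_*(X)[n]$.

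Define $\vartheta:\varphi_*(X)[n]\to X/\bold{t}X$ as the inclusion of the top summand $X\otimes\Lambda^n$ into $X\otimes_T K$ followed by $\alpha$. To construct $\psi$, exploit two natural operators on $X\otimes_T K$: the null-homotopies produce $\Lambda = \sum_i \lambda_i\,\iota_{\xi_i}$, which satisfies $[d_X,\Lambda] = \sum_i t_i\,\iota_{\xi_i}$ and so accounts for the Koszul part of the total differential, while the connection produces the degree-raising ``Gauss--Manin'' operator $\mathcal{N} = \sum_i (\xi_i\wedge)\,\nabla_i$, whose commutator $[d_X,\mathcal{N}] = \sum_i (\xi_i\wedge)\,[d_X,\nabla_i]$ assembles the components of the Atiyah class against the wedge operators. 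Then take $\psi$ to be any homotopy inverse of $\alpha$ followed by the composite $(-1)^{\binom{n+1}{2}}\,\mathrm{pr}_{X\otimes\Lambda^n}\circ\tfrac{1}{n!}\,\mathcal{N}^n$, which transports the bottom summand to the top summand and projects onto it.

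The identity $\psi \circ \vartheta = 1$ then holds for degree reasons: any positive power of $\mathcal{N}$ strictly raises Koszul degree and so annihilates $X \otimes \Lambda^n$, forcing the composite to reduce to the identity on the top summand. The identity $\vartheta \circ \psi = e$ is the substantive point: iterating the commutator of $\mathcal{N}$ with $d_X$ produces $\At_{T/S}(X)^n$ after passing from iterated wedges of $1$-forms to $\Omega^n_{T/S}$ via $\ud t_1 \wedge \cdots \wedge \ud t_n$, while contracting with $\Lambda^n$ via the $\iota_{\xi_i}$ in the homotopy inverse of $\alpha$ contributes the ordered product $\lambda_1 \cdots \lambda_n$. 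Collecting Koszul signs and the normalisation $1/n!$ then gives the formula for $e$.

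The principal obstacle is this final identification. One must carefully track the Koszul sign rules when commuting the $\iota_{\xi_i}$ past the $\xi_j\wedge$, verify that iterating $[d_X,\mathcal{N}]$ symmetrises cleanly into $\At_{T/S}(X)^n$ with the single factor $1/n!$ rather than a more complicated coefficient, and confirm independence from the choices of $\nabla$ and the $\lambda_i$. The denominator $n!$ is precisely the source of the essential $\mathbb{Q}$-algebra hypothesis on $S$.
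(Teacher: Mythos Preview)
Your construction of $\vartheta$ has a genuine error. You define it as the inclusion of the top summand $X\otimes\Lambda^n$ into $X\otimes_T K$ followed by the augmentation $\alpha$; but $\alpha$ projects onto $\Lambda^0$ before passing to $R/\bold{t}R$, so it kills $\Lambda^n$ and your $\vartheta$ is the zero map. The underlying problem is that the naive inclusion $X[n]\hookrightarrow X\otimes_T K$ into the top exterior power is not a morphism of linear factorisations: the Koszul differential $\delta$ sends $\Lambda^n$ nontrivially into $\Lambda^{n-1}$, so the inclusion fails to commute with $d_X+\delta$. (The \emph{projection} $\varepsilon$ onto the top \emph{is} a morphism, since $\delta$ never lands in $\Lambda^n$.) Your degree argument for $\psi\circ\vartheta=1$ then also collapses: if $\mathcal{N}^n$ annihilates $\Lambda^n$, the composite you wrote is zero, not the identity.

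The paper assigns the ingredients the other way round, and this is not merely cosmetic. The null-homotopies are exactly what corrects the top inclusion to a genuine morphism: $\vartheta'=(\ud t_1-\lambda_1)\cdots(\ud t_n-\lambda_n):X[n]\to X\otimes_T K$ does commute with $d_X+\delta$ (each factor anticommutes with it), has $\varepsilon\circ\vartheta'=1$ on the nose, and under the augmentation every term containing some $\ud t_i$ dies, leaving $\vartheta=\pi\circ\vartheta'=(-1)^n\lambda_1\cdots\lambda_n$. The connection enters on the other side, and not as a free-standing operator $\mathcal{N}^n$ (which is not a morphism) but as the contracting homotopy in a deformation retract of the Koszul complex onto $R/\bold{t}R$: one takes $H=(\delta\nabla+\nabla\delta)^{-1}\nabla$, and the perturbation lemma upgrades this $S$-linear datum to an explicit homotopy inverse $\sigma_\infty=\sum_{m\ge 0}(-Hd_X)^m\sigma$ of $\pi$. (Merely knowing $\pi$ is a quasi-isomorphism, which is all quasi-regularity gives you, does not produce an $S$-linear homotopy inverse; this is where perturbation is essential.) Then $\psi=\varepsilon\circ\sigma_\infty$, only the $m=n$ term survives $\varepsilon$, and after clearing the $\tau^{-1}$ normalisations one recognises $\At_{S[\bold{t}]/S}(X)^n$. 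So the $\lambda_i$ live in $\vartheta$ and the Atiyah class lives in $\psi$; you have the right pieces but have placed them in the wrong maps.
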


When $\hmf(S,W)$ is idempotent complete $e$ can be split in this category to obtain a finite rank matrix factorisation homotopy equivalent to $\varphi_*(X)$. It is not always feasible to perform this splitting by hand, but it can be done on a computer. An implementation in Singular is described in \cite{carqmurf}, where the technqiue is used to compute examples of the link homology theory defined by Khovanov and Rozansky using matrix factorisations \cite{Khovanov07, khovanov}.

Derived categories of matrix factorisations have recently been introduced over non-affine schemes, and under some conditions the usual derived pushforward functor gives rise to a pushforward operation on matrix factorisations \cite{pv11}. In this paper we consider only the simplest kind of pushforward between affine schemes, but we expect that the techniques work more globally.

The construction of our idempotent is natural in the context of the theory of residues. To explain, let us assume that $R$ is separated and complete in its $\bold{t}R$-adic topology, so that there exists a flat $S$-linear connection $\nabla^0: R \lto R \otimes_{S[\bold{t}]} \Omega^1_{S[\bold{t}]/S}$ (such connections are constructed in Appendix \ref{section:derhamsplit}) which can be extended to an $S$-linear map $\nabla$ on $R \otimes_{S[\bold{t}]} \Omega_{S[\bold{t}]/S}$, where $\Omega_{S[\bold{t}]/S}$ is the exterior algebra $\wedge \Omega^1_{S[\bold{t}]/S}$. We can view elements $s,r_1,\ldots,r_n$ in $R$ as acting on $R \otimes_{S[\bold{t}]} \Omega_{S[\bold{t}]/S}$ by multiplication, so the commutators $[\nabla, r_i]$ are $S[\bold{t}]$-linear maps on $R \otimes_{S[\bold{t}]} \Omega_{S[\bold{t}]/S}$ and the product
\begin{equation}\label{eq:form_as_map}
s [\nabla, r_1] \cdots [\nabla, r_n]
\end{equation}
therefore induces an $S$-linear endomorphism of $R/\bold{t} R$. Since this module is finitely generated and free over $S$ the operator in (\ref{eq:form_as_map}) has a trace, and one defines the \emph{residue symbol} \cite{ResiduesDuality,Lipman87}
\begin{equation}\label{eq:defn_intro_ressym}
\Ress{R/S}\begin{bmatrix} s \cdot \ud r_1 \cdots \ud r_n \\ t_1, \ldots, t_n \end{bmatrix} = \tr_S\left( s [\nabla, r_1] \cdots [\nabla, r_n] \right) \in S,
\end{equation}
which agrees with the usual analytic residue symbol \cite[Chapter V]{Griffiths} when $S = \mathbb{C}$ and $R = \mathbb{C}[\bold{x}]$.

The Atiyah class is also defined in terms of a commutator with a connection, so it is not surprising to find that the supertrace of endomorphisms involving the Atiyah class can be expressed, via (\ref{eq:defn_intro_ressym}), in terms of residues. To see why this might be interesting, let us take $S = k\llbracket \bold{x}\rrbracket$ to be a power series ring over a field $k$ of characteristic zero, and suppose that $W$ is a polynomial whose zero locus has an isolated singularity at the origin.

The Chern character $\ch(Z)$ of $Z \in \hmf(S,W)$ is the image of the identity under the boundary-bulk map $\beta$ introduced in \cite{kapli} and formalised mathematically in \cite{Segal09, murfet-2009, polishchuk,dyckmurf}. In many ways this map $\beta$ behaves like a trace; for example, $\beta(fg) = \beta(gf)$. Ignoring suspensions, signs and some other details, (\ref{eq:theorem_main_intro}) leads to the following simple calculation of the Chern character of the pushforward
\begin{align*}
\ch(\varphi_*(X)) &= \beta( 1_{\varphi_*(X)} )\\
&= \beta( \psi \circ \vartheta )\\
&= \beta( \vartheta \circ \psi )\\
&= \beta( e )\\
&= \frac{1}{n!} \beta( \lambda_1 \cdots \lambda_n \circ \At_{S[\bold{t}]/S}(X)^n ).
\end{align*}
Since $\beta$ is defined using the supertrace, (\ref{eq:defn_intro_ressym}) allows us to rewrite $\ch(\varphi_*(X))$ in terms of residues, and in this way we recover the Hirzebruch-Riemann-Roch theorem of Polishchuk and Vaintrob (Section \ref{section:hrr}) and more generally we find a formula for the Chern character of the convolution of matrix factorisation kernels (Section \ref{section:fusingtop}). 
\vspace{0.2cm}

\textbf{Defects.} Matrix factorisations appear in string theory as boundary conditions in two-dimensional $\cat{N} = 2$ supersymmetric Landau-Ginzburg models \cite{Kapustin03,brunnerherbst,lazaroiu} with morphisms of matrix factorisations corresponding to boundary fields. This connection has stimulated much of the recent work on matrix factorisations, most notably by Orlov \cite{Orlov04,orlov-2005}. Beyond boundary conditions there is another natural structure in two-dimensional field theories, known as defects. To explain, recall that a two-dimensional topological quantum field theory assigns a value, the correlator, to oriented surfaces (possibly with boundary) called \emph{worldsheets}, which may be decorated with field insertions both in the interior and on components of the boundary labelled with boundary conditions. In the topologically B-twisted Landau-Ginzburg model with potential $W \in \mathbb{C}[x_1,\ldots,x_n]$, the correlator of a disc whose boundary is labelled by a finite rank matrix factorisation $(X,Q)$ of $W$, with insertions of a boundary field $\phi \in \underline{\End}(X)$ and bulk field $\psi \in \mathbb{C}[x_1,\ldots,x_n]$, is given by the Kapustin-Li formula \cite{kapli}
\begin{equation}\label{eq:picture_kaplicorrelator}
\left\langle
\begin{tikzpicture}[scale=0.6,baseline]
\draw[very thick] (0,0) circle (2cm);
\nicecolourscheme (0,0) circle (2cm);
\draw (0,0.5) node {{$\psi$}};
\draw (180:2.5cm) node {{$\phi$}};
\filldraw [black] (0,0) circle (3pt);
\filldraw [black] (180:2cm) circle (3pt);
\draw (2.5,0) node {{$X$}};
\end{tikzpicture}
\right\rangle = (-1)^{\binom{n}{2}} \Res \Bigg[ \begin{matrix} \str( \phi (\ud Q)^{\wedge n}) \psi \\ \partial_{x_1} W, \ldots, \partial_{x_n} W \end{matrix} \Bigg] \in \mathbb{C}.
\end{equation}
To introduce defects we take this piece of worldsheet and draw a smooth line separating it into two halves, on either side of which we consider field theories corresponding to potentials $W$ and $V$. The boundary is divided into two pieces, which we label by finite rank matrix factorisations $X$ of $W$ and $Y$ of $V$. The separating line, called a defect, is also given a label $D$, and with insertions of boundary fields $\phi \in \underline{\operatorname{End}}(X),\phi' \in \underline{\operatorname{End}}(Y)$ and additional fields $\varepsilon, \delta$ which we leave unexplained for now, the correlator should assign to this worldsheet a number
\begin{equation}\label{eq:correlator_kapli_withdefect}
\left\langle
\begin{tikzpicture}[scale=0.6,baseline]
\draw[very thick] (0,0) ellipse (4cm and 2cm);
\nicecolourscheme (0,0) ellipse (4cm and 2cm);
\draw (-4.5,0) node {{$\phi$}};
\filldraw [black] (-4,0) circle (3pt);

\draw (4.5,0) node {{$\phi'$}};
\filldraw [black] (4,0) circle (3pt);

\draw (-4,-1.5) node {{$X$}};
\draw (4,1.5) node {{$Y$}};

\filldraw [black] (-1.25,1.925) circle (3pt);
\draw (-1.25, 2.425) node {{$\varepsilon$}};

\filldraw [black] (1.25,-1.925) circle (3pt);
\filldraw [black] (1.25, -2.425) node {{$\delta$}};

\filldraw [white] (0.3, -1.4) node {{$W$}};
\filldraw [white] (1.9, -1.2) node {{$V$}};

\draw[very thick, in=90, out = 320] (0,0) to (1.25,-1.925);
\draw[->, very thick, in=140, out= 270] (-1.25,1.925) to (0,0);
\draw (0.5,0.5) node {{$D$}};
\end{tikzpicture}
\right\rangle \in \mathbb{C}.
\end{equation}
This correlator should obey certain consistency conditions, one of which says that the value should remain unchanged if we deform the path of the defect line on the worldsheet, as long as the defect line is not taken across field insertions or other defect lines. We can therefore bring the defect $D$ together with the part of the boundary labelled $X$, and in order for the correlator to make sense the resulting \emph{fusion} $D \star X$ should be an object of $\hmf(V)$ and $\varepsilon, \delta$ should be morphisms between $D \star X$ and $Y$.  In this way $D$ defines a function sending matrix factorisations of $W$ to matrix factorisations of $V$, and in fact this is a \emph{functor}, because boundary fields $\phi$ should determine boundary fields on the fusion. We already know how to describe functors $\hmf(W) \lto \hmf(V)$ in terms of objects of $\hmf(V - W)$, so it is natural to label defect lines with such kernels. The full justification of the representation of defects by kernels in the physics literature is via the folding trick; see \cite{wongaffleck,frohlichfuchs,brunnerdefect}. 

We are of course free to consider more complicated worldsheets; the boundary could be labelled by more than two boundary conditions and between these there may be inserted boundary condition changing operators (elements of $\uHom$ rather than $\underline{\End}$), there could be multiple defects separating more than two field theories, and the defect lines need not contact the boundary. A general formula for correlators (\ref{eq:correlator_kapli_withdefect}) in the setting of topological Landau-Ginzburg models defined on world-sheet foams was derived by Khovanov and Rozansky in \cite{Khovanov07}.

One structure that makes defects more interesting than boundary conditions is that the former have a natural operation of fusion
\begin{equation}
\left\langle
\begin{tikzpicture}[scale=0.6,baseline]
\clip (0,0) ellipse (4cm and 2cm);
\nicecolourschemedash (0,0) ellipse (4cm and 2cm);
\draw[->, very thick] (-1.25,1.925) to (-1.25,0);
\draw[very thick] (-1.25,0) to (-1.25,-1.925);
\draw (-2,0) node {{$D_2$}};

\draw[->, very thick] (1.25,1.925) to (1.25,0);
\draw[very thick] (1.25,0) to (1.25,-1.925);
\draw (0.5,0) node {{$D_1$}};
\end{tikzpicture}
\right\rangle
=
\left\langle
\begin{tikzpicture}[scale=0.6,baseline]
\clip (0,0) ellipse (4cm and 2cm);
\nicecolourschemedash (0,0) ellipse (4cm and 2cm);
\draw[->, very thick] (0,1.925) to (0,0);
\draw[very thick] (0,0) to (0,-1.925);
\draw (-1.5,0) node {{$D_1 \star D_2$}};
\end{tikzpicture}
\right\rangle
\end{equation}
where two defect lines $D_1, D_2$ on a worldsheet are moved together and in the limit form a new fused defect $D_1 \star D_2$ (in this picture we depict only part of the worldsheet). This naturally corresponds to composition of functors, or what is the same, convolution of integral kernels. Mathematically, 2D TQFTs with boundary conditions are well-understood in terms of Calabi-Yau categories, and in particular the correlator (\ref{eq:picture_kaplicorrelator}) is understood rigorously \cite{murfet-2009,dyckmurf}. However, when we allow our worldsheets to be decorated with defect lines the situation is much more complicated \cite{runkel09}. In order to make sense of, say, the topological Landau-Ginzburg model with defect lines as a functor, and to derive rigorously the correlators (\ref{eq:correlator_kapli_withdefect}), one should first understand the fusion of defects with boundary conditions and other defects. In simple examples one can do this directly \cite{brunnerdefect,engerreck} but in general this approach is not practical; resolving this problem was our motivation for studying finite models for pushforwards and convolutions in the present paper.

From the point of view of categorification, it is natural to think of worldsheets with defects in terms of planar algebra \cite{khov10} in the bicategory $\cat{LG}$ whose objects are potentials with isolated singularities, whose $1$-morphisms are matrix factorisations of differences of potentials, and whose $2$-morphisms are ordinary morphisms of matrix factorisations. Some of the detailed monoidal structure of this bicategory has been worked out in the physics literature \cite{nils09,nils10} in connection with the LG/CFT correspondence. A discussion of defects from the point of view of three-dimensional field theories can be found in \cite{kapustintft}.
\vspace{0.1cm}

\emph{Acknowledgements.} It is a pleasure to thank Nils Carqueville for introducing the authors to the world of defects and for numerous helpful discussions during the evolution of this project, Amnon Neeman for patiently pointing out many improvements that should be made, Dmytro Shklyarov for explaining his thesis \cite{shklyarov}, and Ed Segal, Ragnar Buchweitz, Igor Burban, Daniel Huybrechts for helpful discussions. The second author thanks the Hausdorff Center for Mathematics, where most of this work was carried out.

\section{Fundamentals}\label{section:fundamentals}

Throughout all rings are commutative. In this section $R$ denotes a ring. First we recall the basic definitions in the theory of matrix factorisations; for further background see \cite{yoshino,khovanov}.

\begin{definition} A \emph{linear factorisation} of $W \in R$ is a $\mathbb{Z}/2$-graded $R$-module $X = X^0 \oplus X^1$ together with an odd endomorphism
\[
d = \begin{pmatrix} 0 & d^1 \\ d^0 & 0 \end{pmatrix}: X \lto X
\]
with the property that $d \circ d = W \cdot 1_X$. Given two linear factorisations $X,Y$ of $W$ the $R$-module $\Hom_R(X,Y)$ is a linear factorisation of zero with differential $d( \alpha ) = d_Y \circ \alpha - (-1)^{|\alpha|} \alpha \circ d_X$. A linear factorisation of zero is the same thing as a $\mathbb{Z}/2$-graded complex. A \emph{morphism} $X \lto Y$ is a cocycle in degree zero of $\Hom_R(X,Y)$, and a morphism is \emph{null-homotopic} if it is a coboundary. We write $\hf(W)$ for the homotopy category of factorisations of $W$, or $\hf(R,W)$ if we want to be specific about the ring, with morphism spaces denoted
\[
\uHom(X,Y) = H^0 \Hom_R(X,Y).
\]
A \emph{matrix factorisation} of $W$ is a linear factorisation whose underlying modules $X^0, X^1$ are free, and we say that $X$ is \emph{finite rank} if further $X$ is finitely generated. We denote by $\HMF(W)$ or $\HMF(R,W)$ the homotopy category of all matrix factorisations, and by $\hmf(W)$ or $\hmf(R,W)$ the full subcategory of finite rank matrix factorisations. These are all triangulated categories.
\end{definition}

\begin{remark}\label{remark:partial_gives_homotopy} Let $k$ be a field and suppose $R = k\llbracket x_1,\ldots,x_n\rrbracket$. Let $(X,d_X)$ be a finite rank matrix factorisation of $W \in R$ and choose a homogeneous basis for $X$ in order to write $d_X$ as a matrix. Applying the derivation $\partial_{x_i} = \partial/\partial x_i$ to the identity $d_X^2 = W \cdot 1_X$ we obtain a null-homotopy
\[
\partial_{x_i}(d_X) \cdot d_X + d_X \cdot \partial_{x_i}(d_X) = \partial_{x_i} W \cdot 1_X
\]
of the action of $\partial_{x_i} W$ on $X$.
\end{remark}

\begin{definition}\label{defn:koszul_signs} If $X, Y$ are $\mathbb{Z}$ or $\mathbb{Z}/2$-graded $R$-modules and $f, g: X \lto Y$ are homogeneous maps, then $f \otimes g$ is the map defined on homogeneous tensors by $x \otimes y \mapsto (-1)^{|x|||g|} f(x) \otimes g(y)$.
\end{definition}

\begin{definition} Given potentials $W,V \in R$ and linear factorisations $X \in \hf(R,W), Y \in \hf(R,V)$, the tensor product $X \otimes_R Y$ acquires a natural $\mathbb{Z}/2$-grading and a differential $d_{X \otimes Y} = d_X \otimes 1 + 1 \otimes d_Y$ making it into a linear factorisation of $W + V$. For more on this construction, see \cite{yoshino98}.
\end{definition}

We will always use the notation $- \otimes -$ to mean this $\mathbb{Z}/2$-graded tensor product of factorisations. If one of the factors in the tensor product is simply an $R$-module, or more generally a complex of $R$-modules, then we view it as a factorisation of zero using the $\mathbb{Z}/2$-folding:

\begin{definition}[(Folding)] Given any complex $P$ of $R$-modules we denote by $P_{\mathbb{Z}/2}$ the \emph{$\mathbb{Z}/2$-folding}, which has $\oplus_{i \in 2\mathbb{Z}} P^i$ in degree zero and $\oplus_{i \in 2\mathbb{Z} + 1} P^i$ in degree one, together with the obvious differential. We view $P_{\mathbb{Z}/2}$ as a linear factorisation of zero over $R$.
\end{definition}

In particular when we write $X \otimes \skos(\bold{t})$ for a Koszul complex $\skos(\bold{t})$, we actually mean $X \otimes \skos(\bold{t})_{\mathbb{Z}/2}$.

\begin{definition} Given $X \in \hmf(R,W)$ we define the \emph{dual} $X^{\lor}$ to be the matrix factorisation with even component $(X^0)^* = \Hom_R(X^0,R)$, odd component $(X^1)^*$ and differential
\[
d_{X^{\lor}} = \begin{pmatrix} 0 & (d^0_X)^* \\ -(d^1_X)^* & 0 \end{pmatrix}.
\]
Given $Y \in \hf(R,W)$ there is a canonical isomorphism $X^{\lor} \otimes Y \cong \shom(X,Y)$ of linear factorisations of zero, natural in both variables, which sends a homogeneous tensor $\nu \otimes y$ to $x \mapsto (-1)^{|\nu||y|} \nu(x) \cdot y$.
\end{definition}

\begin{definition} If $X$ is a $\mathbb{Z}/2$-graded finitely generated projective $R$-module and $\alpha: X \lto~X$ a homogeneous $R$-linear map, the \emph{supertrace} of $\alpha$ is the ordinary trace of $G \alpha$, where $G$ is the grading operator $G(x) = (-1)^{|x|} x$.
\end{definition}

\begin{definition}\label{defn:connections} Let $k$ be a ring, $A$ a $k$-algebra and set $\Omega_{A/k} = \wedge \Omega^1_{A/k}$. If $\ud^0: A \lto \Omega^1_{A/k}$ denotes the K\"ahler differential then there is a unique graded $k$-derivation $\ud: \Omega_{A/k} \lto \Omega_{A/k}$ of degree one such that $\ud^2 = 0$ and $\ud|_A = \ud^0$, and this map is called the \emph{de Rham differential}. A $k$-linear \emph{connection} on an $A$-module $M$ is a $k$-linear map
\[
\nabla^0: M \lto M \otimes_A \Omega^1_{A/k}
\]
satisfying the Leibniz rule
\[
\nabla^0(a m) = a \nabla^0(m) + m \otimes \ud^0(a), \quad \forall a \in A, m \in M.
\]
Any such connection extends uniquely to a $k$-linear map $\nabla$ of degree one on $M \otimes_A \Omega_{A/k}$ satisfying $\nabla(x \omega) = (\nabla x) \omega + (-1)^p x (\ud \omega)$ for $x \in M \otimes_A \Omega^p_{A/k}$ and $\omega \in \Omega_{A/k}$ and $\nabla^0$ is said to be \emph{flat} if $\nabla^2 = 0$. The K\" ahler differential gives a flat connection on $A$, and this induces a flat connection on any free $A$-module. Splitting idempotents, we get a connection on any projective $A$-module (not necessarily a \emph{flat} connection). For details we refer the reader to \cite[\S 2.10]{bourbaki} and \cite[Ch.~8]{loday}
\end{definition}

We will make extensive use of quasi-regular sequences; see \cite[Chapitre $0$ \S 15.1]{EGA4} or \cite{Matsumura}. Every regular sequence $t_1, \ldots, t_n$ in a ring $R$ is quasi-regular, and the converse holds if $R$ is separated and complete in the $I = (t_1,\ldots,t_n)$-adic topology. If $R$ is noetherian then $\bold{t}$ is quasi-regular if and only if the Koszul complex over $R$ of $\bold{t}$ is exact except in degree zero.

\section{Outline}\label{section:integrating}

Let $\varphi: S \lto R$ be a morphism of rings, $W \in S$ a potential, and $X$ a finite rank matrix factorisation of $W$ over $R$. Giving $\mathbb{Z}/2$-graded $R$-modules the $S$-module structure determined by the morphism $\varphi$ defines the pushforward (or restriction of scalars) functor
\[
\varphi_*: \hf(R, W) \lto \hf(S, W).
\]
Throughout we will often write $X$ for $\varphi_*(X)$ and leave the pushforward implicit. If $R$ is a finite rank free $S$-module then $\varphi_*$ sends finite rank matrix factorisations to finite rank matrix factorisations. In general, we look for factorisations $X$ on which $R$ acts up to homotopy like a finitely generated projective $S$-module, in the sense that there is a regular sequence $\bold{t} = \{t_1,\ldots,t_n\}$ in $R$ such that $R/\bold{t} R$ is a finitely generated projective $S$-module, and each $t_j \cdot 1_X$ is null-homotopic. Using tools of local cohomology and perturbation techniques we relate $X$ to a factorisation defined over $R/\bold{t} R$, which can be pushed down to $S$.

In order to explain the intuition we introduce the stable Koszul complex. Our convention is that the Koszul complex $\skos(t)$ on an element $t \in R$ lives in degrees $-1$ and $0$. Given $t \in R$ there is a map of Koszul complexes $\skos(t) \lto \skos(t^2)$,
\begin{equation}\label{eq:koszul_map}
\xymatrix{
R \ar[r]^1\ar[d]_t & R\ar[d]^{t^2}\\
R \ar[r]_t & R
}
\end{equation}
Set $\bold{t}^i = \{ t_1^i, \ldots, t_n^i \}$. Tensoring together maps of the form (\ref{eq:koszul_map}) one defines a direct system
\begin{equation}\label{eq:koszul_sequence}
\skos(\bold{t}) \lto \skos(\bold{t}^2) \lto \skos(\bold{t}^3) \lto \cdots
\end{equation}
of Koszul complexes, and the direct limit is called the \emph{stable Koszul complex} $\skos_\infty(\bold{t})$. This complex is used to define the local cohomology functor $\rdev \Gamma_Z( Y ) = Y \otimes \skos_\infty(\bold{t})$. Since the direct limit of (\ref{eq:koszul_sequence}) is a direct sum of limits of sequences of the form
\[
R \xlto{t} R \xlto{t} R \xlto{t} R \lto \cdots
\]
which has limit $R[t^{-1}]$, $\skos_\infty(\bold{t})$ is isomorphic to the tensor product $\bigotimes_{j=1}^n(R \lto R[t_j^{-1}])$. Observe that the canonical morphism $\varepsilon_\infty: \skos_\infty(\bold{t}) \lto R[n]$ has a mapping cone which is an extension of localisations of $R$ to open subsets of the complement of $Z$.

Let $X$ be a finite rank matrix factorisation of $W$ over $R$, with each endomorphism $t_j \cdot 1_X$ null-homotopic. Combining this hypothesis with the previous observation, we see that the tensor product of $X$ with the cone of the map $\varepsilon_\infty$ is contractible. Hence we have a homotopy equivalence
\[
X \otimes \skos_\infty(\bold{t}) \lto X[n].
\]
In the homotopy category we therefore have
\begin{align*}
\uHom(X[n], X[n]) &\cong \uHom(X[n], X \otimes \skos_\infty(\bold{t})) \cong \varinjlim_i \uHom(X[n], X \otimes \skos(\bold{t}^i)),
\end{align*}
so the identity on $X[n]$ must factor through some $X \otimes \skos(\bold{t}^i)$. Indeed, since $t_j \cdot 1_X = 0$ the tensor product $X \otimes \skos(\bold{t})$ is homotopy equivalent to a direct sum of $X$ and its suspension $X[1]$, and in Section \ref{section:koszulcpxs} we give a particular embedding of $X[n]$ into $X \otimes \skos(\bold{t})$. In Section \ref{section:perturbation} we give some background on the perturbation lemma, which we use in Sections \ref{section:homotopies_to_morphisms} and \ref{section:pertfrompoint} to prove that under some mild hypotheses the map induced by the augmentation from the Koszul complex to its cohomology
\[
X \otimes \skos(\bold{t}) \lto X \otimes R/\bold{t} R = X/\bold{t} X
\]
is a homotopy equivalence over $S$. At this point we will have proven that $X$ is a direct summand of $X/\bold{t} X$ in $\hf(S,W)$, but in order to have a satisfactory description of the corresponding idempotent $e$ on $X/\bold{t} X$ we need to recall the theory of residues and traces (Section \ref{section:residuesandtraces}) and Atiyah classes (Section \ref{section:atiyah_class}). Then finally we prove the main theorem, as stated in the introduction, in Section \ref{section:idempotentformula}.

Then we turn to applications. In Section \ref{section:chern_char_over} we derive some residue formulas for Chern characters, in Section \ref{section:fusingtop} we study the convolution of kernels, and in Section \ref{section:knorrer} we compute an example of the idempotent appearing in Kn\"orrer periodicity.

\section{Koszul complexes on null-homotopic maps}\label{section:koszulcpxs}

Let $R$ be a ring and $W \in R$ a potential. Let $(X,d)$ be a linear factorisation of $W$ and suppose we are given a sequence $\bold{t} = \{ t_1,\ldots, t_n \}$ of elements of $R$ which acts null-homotopically on $X$, and for $1 \le i \le n$ let $\lambda_i$ be a degree one $R$-linear map on $X$ with
\[
\lambda_i \cdot d + d \cdot \lambda_i = t_i \cdot 1_X.
\]
Let $\skos(t_i)$ denote the cone of the morphism $R \xlto{t_i} R$ of complexes concentrated in degree zero. In the triangulated category of linear factorisations there is a triangle (tensor products are $R$-linear)
\[
X \xlto{t_i} X \xlto{\theta} X \otimes \skos(t_i) \lto X[1].
\]
Since $t_i \cdot 1_X$ is null-homotopic, $\theta$ is actually a split monomorphism (up to homotopy), and $X \otimes \skos(t_i)$ is homotopy equivalent to $X \oplus X[1]$. Writing $\skos(\bold{t}) = \skos(t_1) \otimes \cdots \otimes \skos(t_n)$ and iterating this argument we find that $X \otimes \skos(\bold{t})$ is homotopy equivalent to $(X \oplus X[1])^{\oplus 2^{n-1}}$. Of all the ways of embedding $X$ and $X[1]$ as direct summands in this factorisation, we want to highlight one in particular.

The model for the Koszul complex $\skos(\bold{t})$ that we use is the following: let $F$ be the free $R$-module on the formal symbols $\ud t_1, \ldots, \ud t_n$ and make $\wedge F$ a graded $R$-algebra by setting $|\ud t_i| = -1$. We equip $\wedge F$ with the differential $\delta = (\sum_{j=1}^n t_i (\ud t_i)^*) \,\neg\, (-)$ and set $\skos(\bold{t}) = (\wedge F, \delta)$. Then $X \otimes \skos(\bold{t})$ is a linear factorisation of $W$ with differential $d + \delta = d \otimes 1 + 1 \otimes \delta$. Let $\varepsilon: X \otimes \skos(\bold{t}) \lto X[n]$ be the morphism which projects onto the degree $-n$ component of $\skos(\bold{t})$, with a sign:
\begin{equation}\label{eq:augmentation_epsilon}
\varepsilon( x \cdot \ud t_1 \cdots \ud t_n ) = (-1)^{n|x|} x.
\end{equation}
This sign is ``correct'' in the sense that it arises from $X \otimes (R[n]) \cong X[n]$ and the sign that accompanies the removal of a suspension from the second variable of a tensor product. Multiplying $X \otimes \skos(\bold{t})$ on the left by $\ud t_i$ also involves a Koszul sign
\begin{gather*}
\ud t_i \wedge -: X \otimes \wedge F \lto X \otimes \wedge F,\\
x \otimes \omega \mapsto (-1)^{|x|} x \otimes \ud t_i \wedge \omega
\end{gather*}
for homogeneous $x \in X$ and $\omega \in \wedge F$. We will write $\ud t_i$ for this map, and $\lambda_i$ for $\lambda_i \otimes 1$, and set
\[
\vartheta' = (\ud t_1 - \lambda_1) \circ \cdots \circ (\ud t_n - \lambda_n).
\]
We claim that this gives a right inverse to the projection $\varepsilon$.

\begin{lemma} $\vartheta'$ is a morphism of linear factorisations $X[n] \lto X \otimes \skos(\bold{t})$, and $\varepsilon \circ \vartheta' = 1$.
\end{lemma}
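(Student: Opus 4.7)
My plan is to verify the two claims separately: the assertion that $\vartheta'$ is a morphism reduces, via a graded Leibniz argument, to a one-variable commutator calculation, and the identity $\varepsilon \circ \vartheta' = 1$ then follows by direct expansion of the product.

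\textbf{The morphism property.} After unwinding the differential on $\Hom$-spaces of linear factorisations, the requirement that $\vartheta'$ give a morphism $X[n] \lto X \otimes \skos(\bold{t})$ in $\hf(R,W)$ becomes the vanishing of the graded commutator $[d + \delta, \vartheta']$. Since $[d + \delta, -]$ is a graded derivation and $\vartheta' = (\ud t_1 - \lambda_1) \circ \cdots \circ (\ud t_n - \lambda_n)$ is a composition of $n$ odd factors, it suffices to prove
\[
[d + \delta, \ud t_i - \lambda_i] = 0 \qquad (1 \le i \le n).
\]
I plan to decompose this as a sum of four simpler commutators. Two of them vanish for structural reasons: $[d, \ud t_i] = 0$ because $d$ acts on $X$ alone while $\ud t_i$ acts on $\wedge F$ alone, with the Koszul sign $(-1)^{|x|}$ in the definition of $\ud t_i$ arranged precisely so these two odd operators anticommute; and $[\delta, \lambda_i] = 0$ because $\lambda_i$ is $R$-linear, so $\lambda_i \otimes 1$ commutes with both the scalar multiplications by the $t_j$ and the contractions $\iota_j = (\ud t_j)^* \,\neg\, (-)$ on the wedge factor that together make up $\delta$. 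The remaining two commutators contribute $[d, \lambda_i] = t_i \cdot 1$ (the defining property of the contracting homotopy $\lambda_i$) and $[\delta, \ud t_i] = t_i \cdot 1$, the latter being an instance of the standard Clifford relation $\iota_j (\ud t_i \wedge -) + (\ud t_i \wedge -) \iota_j = \delta_{ij}$ applied to $\delta = \sum_j t_j \iota_j$. Summing with the signs dictated by $\ud t_i - \lambda_i$ yields $0 - t_i + t_i - 0 = 0$, as required.

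\textbf{The identity $\varepsilon \circ \vartheta' = 1$.} I would expand $\vartheta'$ into its $2^n$ summands. Any summand containing at least one $\lambda_j$ factor lies in wedge degree strictly less than $n$, and is therefore annihilated by the projection $\varepsilon$, so only $\ud t_1 \circ \cdots \circ \ud t_n$ contributes. A straightforward iteration of $\ud t_i(x \otimes \omega) = (-1)^{|x|} x \otimes \ud t_i \wedge \omega$ over the $n$ factors produces the top form $(-1)^{n|x|} x \otimes \ud t_1 \wedge \cdots \wedge \ud t_n$, and applying $\varepsilon$ cancels the accumulated sign against the $(-1)^{n|x|}$ built into its definition in (\ref{eq:augmentation_epsilon}), returning $x$.

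The only substantive input is the Clifford relation that produces $[\delta, \ud t_i] = t_i$; the remaining obstacle is purely one of bookkeeping, namely tracking the Koszul signs carefully enough that all the cancellations above land exactly as advertised.
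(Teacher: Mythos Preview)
Your proof is correct and follows essentially the same approach as the paper: both reduce the morphism property to the anticommutation of $d+\delta$ with each factor $\ud t_i - \lambda_i$ and verify this via the four constituent commutators, with the key identity $[\delta,\ud t_i]=t_i$ obtained from the Clifford/derivation relation on the Koszul complex. Your treatment of $\varepsilon\circ\vartheta'=1$ spells out in detail what the paper simply declares ``clear'', but the argument is the same.
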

\begin{proof}
It suffices to prove that $d + \delta$ and $\ud t_i - \lambda_i$ anticommute. Since $\delta$ is a graded derivation on $\skos(\bold{t})$ we have, for $\omega$ homogeneous, $\delta \ud t_i(\omega) = \delta( \ud t_i \wedge \omega ) = t_i \omega - \ud t_i \wedge \delta(\omega)$. That is, $\delta \ud t_i + \ud t_i \delta = t_i$. Since $\delta$ anticommutes with $\ud t_i$ and $\lambda_i$ the first claim follows. It is clear that $\varepsilon \circ \vartheta' = 1$.
\end{proof}

Let us explain why the definition of the splitting $\vartheta'$ is a natural one. We may think of the Koszul complex as the tensor product of
complexes
\[
\skos(\bold{t}) = \bigotimes_{i=1}^n \left( R \ud t_i \lto R \right) \text{.}
\]
Similarly, we can describe the stable Koszul complex as
\[
\skos_\infty(\bold{t}) = \bigotimes_{i=1}^n \left( R \ud t_i \lto R[t_i^{-1}] \right)
\]
where the differential is given by 
$\delta' = (\sum_{j=1}^n \ud t_i^*) \,\neg\, (-)$.
As explained above, the complex $\skos_\infty(\bold{t})$ can be obtained as a colimit over
the Koszul complexes $\skos(\bold{t}^i)$ and hence we have a natural map
$\iota:\; \skos(\bold{t}) \to \skos_\infty(\bold{t})$
which on the $i$-th component of the tensor products is given by
\[
\xymatrix{
R \ud t_i \ar[r]^1 \ar[d] & R \ud t_i \ar[d] \\
R \ar[r]^{t_i^{-1}} & R[{t_i}^{-1}] \text{.}
}
\]
For a moment we restrict attention to one variable $t_i$. We claim that the canonical map 
\[
X \otimes \skos_\infty(t_i) \to X[1]
\]
is a homotopy equivalence with homotopy inverse given by $\alpha_i = \ud t_i -
\frac{\lambda_i}{t_i}$ with homotopy $\frac{\lambda_i}{t_i}$. This follows from an application of the homological
perturbation lemma (see Lemma \ref{prop:perturb_to_morphism} below), and can of course be verified
explicitly by direct calculation.

In fact this map $\alpha_i$ factors explicitly via the map $\beta_i = \ud t_i - \lambda_i$ to yield:
\[
\xymatrix{
X \otimes \skos(t_i) \ar[r] &  X \otimes \skos_\infty(t_i)\\
& X[1]. \ar[u]_{\alpha_i} \ar[ul]^{\beta_i}
}
\]
Applying this argument iteratively to all variables, we obtain the commutative
diagram
\[
\xymatrix{
X \otimes \skos(\bold{t}) \ar[r]^{\iota} &  X \otimes \skos_\infty(\bold{t})\\
& X[n] \ar[u]_{\alpha} \ar[ul]^{\vartheta'}
}
\]
where $\vartheta'$ is as above, and $\alpha$ is inverse to the canonical map $\varepsilon_{\infty}: X \otimes \skos_\infty(\bold{t}) \to X[n]$, which composes with $\iota$ to give $\varepsilon$. We see again that $\varepsilon \circ \vartheta' = \varepsilon_{\infty} \circ \iota \circ \vartheta' = \varepsilon_{\infty} \circ \alpha = 1$.

We refer to the canonical morphism of complexes $\skos(\bold{t}) \lto R/\bold{t} R$
as the augmentation. Tensoring with $X$ yields a morphism of
linear factorisations
\[
\pi:\; X \otimes \skos(\bold{t}) \lto X \otimes R/\bold{t} R =
X/\bold{t} X \text{.}
\]
We write $\vartheta$ for the composite $\pi \circ \vartheta'$, and it is clear that $\vartheta = (-1)^n \lambda_1 \circ \cdots \circ \lambda_n$.

Assuming that $S \lto R$ is as in the introduction with $R/\bold{t} R$ a
finite free $S$-module, the map $\pi$ will, under mild assumptions, turn out to be an isomorphism in
$\hf(S,W)$. In this case the
following commutative diagram summarises the contents of the foregoing discussion:

\[
\xymatrix@C+3pc{
X/\bold{t} X \ar@<-0.8ex>[r]_{\pi^{-1}} \ar@/_5pc/[rr]_{\psi = \varepsilon \circ
\pi^{-1}}& 
X \otimes K(\bold{t}) \ar[d]_{\iota} \ar@<-0.8ex>[l]_{\pi} \ar@<-0.8ex>[r]_{\varepsilon}
& X[n]. \ar@<-0.8ex>[l]_{\vartheta'} \ar@/_3pc/[ll]_{\vartheta = \pi \circ \vartheta'}\\
 & X \otimes K_\infty(\bold{t}) \ar[ur]_{\simeq} & 
}
\]

By construction we have $\psi \circ \vartheta = 1$ in $\HMF(S,W)$ and
the idempotent $e = \vartheta \circ \psi$ therefore realises $X[n]$ as a summand of the
finite rank matrix factorisation $X/\bold{t}X$. In order to explicitly describe the idempotent $e$ we have to calculate $\pi^{-1}$,
which we will do in the subsequent sections.

\section{Perturbation}\label{section:perturbation}

Throughout this section $R$ is a ring, $W$ is an element of $R$, and all linear factorisations are over $R$.

\begin{definition}\label{defn:2perp} A \emph{deformation retract datum} of linear factorisations of $W$ consists of
\[
\xymatrix@C+2pc{
(L,b) \ar@<-0.8ex>[r]_i & (M,b), \ar@<-0.8ex>[l]_p
} \quad h
\]
where $(L,b)$ and $(M,b)$ are linear factorisations of $W$, $p$ and $i$ are morphisms of factorisations, and $h$ is a degree one map $M \lto M$, which together satisfy the following two conditions:
\begin{itemize}
\item[(i)] $pi = 1$,
\item[(ii)] $ip = 1 + bh + hb$.
\end{itemize}
Notice that in particular $p$ is a homotopy equivalence with inverse $i$.
\end{definition}

\begin{remark} A \emph{deformation retract datum} of ordinary (i.e. $\mathbb{Z}$-graded) complexes over some ring is defined in the same way, with all morphisms and homotopies $\mathbb{Z}$-graded. Note that if we have such a $\mathbb{Z}$-graded deformation retract datum then the $\mathbb{Z}/2$-folding is a deformation retract datum of linear factorisations of zero.
\end{remark}

Suppose we are given a deformation retract datum. We want to perturb the differential on $M$ and see if we can induce a perturbation of the differential on $L$, together with a homotopy equivalence between these two perturbed factorisations. We say that a degree one $R$-linear map $\mu: M \lto M$ is a \emph{small perturbation} if $(\mu h)^n = 0$ for all sufficiently large integers $n$. In this case
\[
(1 - \mu h)\sum_{n \ge 0} (\mu h)^n = 1
\]
so $1-\mu h$ is an isomorphism of $\mathbb{Z}/2$-graded $R$-modules, and we set
\[
A = (1 - \mu h)^{-1} \mu = \sum_{n \ge 0} (\mu h)^n \mu.
\]
Consider the following collection of data:
\begin{equation}\label{eq:new_perturb_fac}
\xymatrix@C+2pc{
(L,b_\infty) \ar@<-0.8ex>[r]_{i_\infty} & (M,b + \mu), \ar@<-0.8ex>[l]_{p_\infty}
} \quad h_\infty
\end{equation}
where
\begin{equation}\label{eq:new_perturb_fac2}
i_\infty = i + hAi, \quad p_\infty = p + pAh, \quad h_\infty = h + hAh, \quad b_\infty = b + pAi.
\end{equation}
We are going to give conditions under which (\ref{eq:new_perturb_fac}) is again a deformation retract datum, possibly for some different potential $W' \in R$.

\begin{theorem}\label{theorem:2perp} Given $W' \in R$ let $\mu$ be a small perturbation such that $(b + \mu)^2 = W' \cdot 1_M$. If either of the following conditions hold:
\begin{itemize}
\item[(1)] $p \mu = 0, ph = 0$ and we assume $W' = W$, or
\item[(2)] $h i = 0, ph = 0, h^2 = 0$ and we allow $W' \neq W$
\end{itemize}
then (\ref{eq:new_perturb_fac}) is a deformation retract datum of linear factorisations of $W'$.
\end{theorem}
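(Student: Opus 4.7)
The plan is to establish the theorem via the classical homological perturbation machinery, adapted to keep track of the $\mathbb{Z}/2$-graded structure and the square-zero (really, square-$W$) condition. The entire argument is driven by the two algebraic recursions for $A = (1-\mu h)^{-1}\mu$, namely
\[
A = \mu + \mu h A = \mu + A h \mu,
\]
together with the bracket identity derived from expanding $(b+\mu)^2 = W' \cdot 1_M$: since $b$ and $\mu$ are odd,
\[
b\mu + \mu b = (W' - W)\cdot 1_M - \mu^2.
\]
In case (1) this simplifies to $b\mu + \mu b + \mu^2 = 0$, while in case (2) we retain the full identity. From these two ingredients one derives, by a short induction on the geometric expansion of $A$, the commutator formula $[b,A] = -A^2 + (W'-W)A(\mu)^{-1}\cdot(\text{correction})$; in practice only certain combinations like $bA + Ab$ occur, and those simplify cleanly.

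Next, I would verify the four requirements for (\ref{eq:new_perturb_fac}) to be a deformation retract datum of $W'$, in the following order. First, \emph{counitality} $p_\infty i_\infty = 1$: expand the product to $pi + phAi + pAhi + pAh^2Ai$; in case (1) the relation $p\mu = 0$ together with $ph = 0$ forces $pA = 0$ (by the recursion $pA = p\mu + p\mu hA$), so $p_\infty = p$ and the identity is immediate; in case (2) one uses $ph = 0$ to kill $phAi$, the recursion $Ahi = \mu hi + \mu h A h i$ together with $hi = 0$ to kill $pAhi$, and $h^2 = 0$ for the last term. Second, the \emph{homotopy identity} $i_\infty p_\infty = 1 + (b+\mu)h_\infty + h_\infty(b+\mu)$: expand both sides, use the original homotopy relation $ip = 1 + bh + hb$ to eliminate $ip$, and reduce all remaining terms to combinations of $Ah$, $hA$, and $hAh$ using the commutator formula for $[b,A]$; the side conditions then collapse everything to the required form. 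Third, verifying $i_\infty$ and $p_\infty$ are morphisms of linear factorisations, i.e.\ that they intertwine $b_\infty$ with $b+\mu$, follows from the same commutator manipulations. Finally, the square $b_\infty^2 = W'\cdot 1_L$ is obtained by sandwiching: $b_\infty^2 = (b + pAi)^2$, expand, use $p_\infty i_\infty = 1$ and $b^2 = W$, and absorb the correction $(W'-W)$ exactly using the bracket identity above (this step is where case (2) genuinely needs $W' \neq W$ to be allowed).

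The main obstacle is the homotopy identity together with the square calculation: both involve nested occurrences of $A$, and one must carefully exploit whichever side conditions are available ($p\mu = 0, ph = 0$ versus $hi = 0, ph = 0, h^2 = 0$) to reduce long alternating products of $h$, $\mu$, and $A$ to zero or to the combinations that reassemble into $b_\infty$ and $h_\infty$. I expect the case (2) verification of the square to be the subtlest point, since one must track how the shift $W \to W'$ propagates from $M$ to $L$; the cleanest bookkeeping is to introduce the formal identity $(b+\mu)A = (W'-W)\cdot 1_M + \text{(terms in } A\text{)}$ and apply $p(-)i$ to it, exploiting $pi = 1$ to get the scalar shift on $L$.
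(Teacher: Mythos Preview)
The paper's own proof consists of a single sentence citing Crainic's perturbation lemma \cite[Theorem~2.3]{crainic} and asserting that the present statement is its $\mathbb{Z}/2$-graded analogue; no computations are carried out. Your proposal is, in effect, a sketch of what that cited argument contains: a direct verification of the four axioms of a deformation retract datum using the recursions $A = \mu + \mu h A = \mu + A h \mu$ and the side conditions. That strategy is correct and standard; the division into cases (1) and (2) is handled just as you say (in case~(1) one gets $pA = 0$, hence $p_\infty = p$ and $b_\infty = b$, which the paper notes in a remark immediately after the theorem).

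Two places in your outline are imprecise enough to flag. First, the formula ``$[b,A] = -A^2 + (W'-W)A(\mu)^{-1}\cdot(\text{correction})$'' is not meaningful as written since $\mu$ is not invertible; what one actually proves (and what Crainic proves in the $W=W'=0$ case) is an identity of the form $bA + Ab = -AhA + (W'-W)(1-\mu h)^{-1}$ or a close variant, obtained by summing $b(\mu h)^n\mu + (\mu h)^n\mu b$ term by term and telescoping. Second, your proposed bookkeeping for $b_\infty^2 = W'$ via ``$(b+\mu)A = (W'-W)\cdot 1_M + \ldots$'' is again schematic; the clean route is to first verify that $i_\infty$ intertwines $b_\infty$ with $b+\mu$, then compute $b_\infty^2 = p_\infty (b+\mu) i_\infty b_\infty = p_\infty (b+\mu)^2 i_\infty = W' p_\infty i_\infty = W'$, which uses only $p_\infty i_\infty = 1$ and the intertwining. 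With these corrections your outline goes through; it simply does by hand what the paper delegates to the reference.
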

\begin{proof}
This is the $\mathbb{Z}/2$-graded variant of the perturbation lemma given in \cite[Theorem 2.3]{crainic}.
\end{proof}

\begin{remark} In type $(1)$ it follows from the assumptions that $p_\infty = p$ and $b_\infty = b$.
\end{remark}

\section{From homotopies to morphisms}\label{section:homotopies_to_morphisms}

Let $\varphi: S \lto R$ be a ring morphism, $W \in S$ a potential and $(X,d)$ a finite rank matrix factorisation of $W$ over $R$. Fix a homogeneous $R$-basis $\{ \xi_i \}_{i \in I}$ for $X$, and suppose that we are given a deformation retract datum of linear factorisations of zero over $S$
\begin{equation}\label{eq:pre_perturb_htm2}
\xymatrix@C+2pc{
(L,b) \ar@<-0.8ex>[r]_\sigma & (M,b), \ar@<-0.8ex>[l]_p
} \quad h.
\end{equation}
We assume that $(L,b)$ and $(M,b)$ are linear factorisations of zero over $R$ and that $p$ is $R$-linear, but we \emph{do not} assume that either $h$ or $\sigma$ is $R$-linear. We can tensor with $X$, considered as a $\mathbb{Z}/2$-graded $R$-module with no differential, to obtain a diagram of linear factorisations of zero over $S$ (all tensor products are $\mathbb{Z}/2$-graded and over $R$)
\begin{equation}\label{eq:pre_perturb_htm}
\xymatrix@C+2pc{
(X \otimes L,1 \otimes b) \ar@<-0.8ex>[r]_{1 \otimes \sigma} & (X \otimes M,1 \otimes b), \ar@<-0.8ex>[l]_{1 \otimes p}
} \quad 1 \otimes h.
\end{equation}
The $S$-linear map $1 \otimes h$ is defined using our chosen basis for $X$ (and the Koszul sign convention) by
\begin{equation}\label{eq:pre_perturb_htm11}
1 \otimes h: X \otimes M \lto X \otimes M, \qquad \xi_i \otimes m \mapsto (-1)^{|\xi_i|} \xi_i \otimes h(m).
\end{equation}
The map $1 \otimes \sigma$ is defined similarly, but without signs; where it will not cause confusion we will also write $h$ and $\sigma$ for these extensions. With all this in mind, it is clear that (\ref{eq:pre_perturb_htm}) is a deformation retract datum of linear factorisations of zero over $S$.

We will now insert as a perturbation the differential $d$ on $X$.

\begin{proposition}\label{prop:perturb_to_morphism} Suppose that the perturbation $\mu = d \otimes 1$ on $X \otimes M$ in the original deformation retract datum (\ref{eq:pre_perturb_htm2}) is small, and that we have $h^2 = 0, hi = 0$ and $ph = 0$. Then there is a deformation retract datum of linear factorisations of $W$ over $S$
\begin{equation}\label{eq:pre_perturb_htm3}
\xymatrix@C+2pc{
(X \otimes L, d \otimes 1 + 1 \otimes b) \ar@<-0.8ex>[r]_{\sigma_\infty} & (X \otimes M,d \otimes 1 + 1 \otimes b), \ar@<-0.8ex>[l]_{1 \otimes p}
} \quad h_\infty
\end{equation}
where 
\begin{align*}
\sigma_\infty = \sum_{m \ge 0} ( h d )^{m} \sigma,\qquad
h_\infty = \sum_{m \ge 0} ( h d )^{m} h.
\end{align*}
\end{proposition}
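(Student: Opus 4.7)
The plan is to recognise this proposition as a direct application of Theorem \ref{theorem:2perp}, case (2), with the deformation retract datum being (\ref{eq:pre_perturb_htm}) and the perturbation being $\mu = d \otimes 1$. First I would verify that (\ref{eq:pre_perturb_htm}) is itself a deformation retract datum of linear factorisations of zero over $S$: the identities $p\sigma = 1$ and $\sigma p = 1 + bh + hb$ from (\ref{eq:pre_perturb_htm2}) pass through $1 \otimes (-)$ term by term because the left-hand tensor factor is the identity on $X$, and the hypotheses $h^2 = 0$, $h\sigma = 0$, $ph = 0$ of Theorem \ref{theorem:2perp}(2) transport to $X \otimes M$ in the same way.

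Next I would discharge the remaining inputs to Theorem \ref{theorem:2perp}. Smallness of $\mu$ is assumed, and the identity $(1 \otimes b + d \otimes 1)^2 = W \cdot 1$ is a short Koszul-sign calculation: the two cross terms $(1 \otimes b)(d \otimes 1)$ and $(d \otimes 1)(1 \otimes b)$ cancel because both $b$ and $d$ are odd (using Definition \ref{defn:koszul_signs}), while $(1 \otimes b)^2 = 1 \otimes b^2 = 0$ and $(d \otimes 1)^2 = d^2 \otimes 1 = W \cdot 1_{X \otimes M}$. Theorem \ref{theorem:2perp}(2) then produces a perturbed deformation retract datum described by the general formulas (\ref{eq:new_perturb_fac2}) in terms of $A = \sum_{n \geq 0} (\mu h)^n \mu$.

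Finally I would translate those formulas into the form stated in the proposition. The telescoping identity $h \, (\mu h)^n \, \mu = (h \mu)^{n+1}$ gives
\[
\sigma_\infty \;=\; \sigma + h A \sigma \;=\; \sum_{m \geq 0} (h \mu)^m \sigma,
\qquad
h_\infty \;=\; h + h A h \;=\; \sum_{m \geq 0} (h \mu)^m h,
\]
and with the paper's shorthand $d$ for the composite factor $\mu = d \otimes 1$ these become the stated formulas. For the retraction, a signed Koszul computation using $ph = 0$ shows that $(1 \otimes p)(d \otimes 1)(1 \otimes h) = 0$, so every term in $pAh$ vanishes and $p_\infty$ reduces to $1 \otimes p$; the same cancellation collapses $pA\sigma$ to $p\mu\sigma = d \otimes 1_L$, matching the new differential $b_\infty = 1 \otimes b + d \otimes 1$ displayed in (\ref{eq:pre_perturb_htm3}).

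The main obstacle is really only sign-bookkeeping. One must keep careful track of the Koszul signs from Definition \ref{defn:koszul_signs} when verifying $(1 \otimes b + d \otimes 1)^2 = W \cdot 1$ and the key cancellation $(1 \otimes p)(d \otimes 1)(1 \otimes h) = 0$, since $p, h, \sigma$ act on a different tensor factor from $d$. Once these identities are in place the rest is a mechanical rearrangement of the output of Theorem \ref{theorem:2perp}.
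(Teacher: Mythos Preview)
Your proposal is correct and follows essentially the same route as the paper: apply Theorem~\ref{theorem:2perp}(2) to the tensored datum (\ref{eq:pre_perturb_htm}) with perturbation $\mu = d \otimes 1$, then simplify $p_\infty$ and $b_\infty$ using $ph = 0$. The one point the paper makes more explicit is that the commutations $(1 \otimes p)(d \otimes 1) = (d \otimes 1)(1 \otimes p)$ and $(1 \otimes b)(d \otimes 1) + (d \otimes 1)(1 \otimes b) = 0$ hold because $p$ and $b$ are $R$-linear (unlike $h$ and $\sigma$, which are only $S$-linear and defined via the chosen basis), not merely by Koszul sign bookkeeping.
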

\begin{proof}
It is clear from the hypotheses that $(1 \otimes h)^2 = 0, (1 \otimes h)(1 \otimes \sigma) = 0$ and $(1 \otimes p)(1 \otimes h) = 0$, and also that $(d \otimes 1 + 1 \otimes b)^2 = d^2 \otimes 1 = W \cdot 1_{X \otimes M}$ (here we use that $b$ is $R$-linear). It follows from Theorem \ref{theorem:2perp}(2) that we have a deformation retract datum of linear factorisations of $W$ over $S$
\begin{equation}\label{eq:pre_perturb_htm12}
\xymatrix@C+2pc{
(X \otimes L, b_\infty) \ar@<-0.8ex>[r]_(0.4){\sigma_\infty} & (X \otimes M,d \otimes 1 + 1 \otimes b), \ar@<-0.8ex>[l]_(0.6){p_\infty}
} \quad h_\infty
\end{equation}
where
\begin{align*}
A &= \sum_{m \ge 0} ((d \otimes 1)(1 \otimes h))^m (d \otimes 1),\\
\sigma_\infty &= 1 \otimes \sigma + (1 \otimes h) A (1 \otimes \sigma),\\
p_\infty &= 1 \otimes p + (1 \otimes p)A(1 \otimes h),\\
h_\infty &= 1 \otimes h + (1 \otimes h)A(1 \otimes h),\\
b_\infty &= 1 \otimes d + (1 \otimes p)A(1 \otimes \sigma).
\end{align*}
But we notice that because $p$ is $R$-linear, $(1 \otimes p)(d \otimes 1) = (d \otimes 1)(1 \otimes p)$ and since $(1 \otimes p)(1 \otimes h) = 0$ we find that $(1 \otimes p)A(1 \otimes \sigma) = d \otimes 1$ hence $b_\infty = d \otimes 1 + 1 \otimes b$. Similarly one checks that $p_\infty = p$ and some simple manipulations change $i_\infty, h_\infty$ into the desired form, completing the proof.
\end{proof}

\section{The idempotent}\label{section:pertfrompoint}

Let $\varphi: S \lto R$ be a ring morphism, $W \in S$ a potential, and $(X,d)$
a finite rank matrix factorisation of $W$ over $R$. Let $\bold{t} = \{t_1,\ldots,t_n\}$ be a quasi-regular sequence in $R$ such that $t_j \cdot 1_X$ is null-homotopic for $1 \le j \le n$. We assume that there is a deformation retract datum of $\mathbb{Z}$-graded complexes over $S$
\begin{equation}\label{eq:koszul_perturb1}
\xymatrix@C+2pc{
(R/\bold{t} R,0) \ar@<-0.8ex>[r]_\sigma & (\skos(\bold{t}), \delta), \ar@<-0.8ex>[l]_\pi
} \quad h
\end{equation}
satisfying the identities $h^2 = 0, h\sigma = 0$ and $\pi h = 0$. We emphasise that while $\pi$ is $R$-linear, the maps $h$ and $\sigma$ are only $S$-linear. The Koszul complex $\skos(\bold{t})$ is defined as in Section \ref{section:koszulcpxs} and $\pi: \skos(\bold{t}) \lto R/\bold{t} R$ is the augmentation. If $R$ is noetherian then $\pi$ is a quasi-isomorphism, and a deformation retract datum (\ref{eq:koszul_perturb1}) satisfying these identities will exist, for example, if $R$ and $R/\bold{t} R$ are projective over $S$ since in this case $\pi$ is a homotopy equivalence over $S$.

We prove that $X[n]$ is a direct summand of $X/\bold{t} X$ in the homotopy category of linear factorisations $\hf(S,W)$, and describe the corresponding idempotent. Consider the augmentation
\[
1 \otimes \pi: X \otimes \skos(\bold{t}) \lto X \otimes R/\bold{t} R = X/\bold{t} X
\]
where as usual we $\mathbb{Z}/2$-fold the second tensor component, and all tensor products are over $R$. We prove next that $1 \otimes \pi$ is a homotopy equivalence of factorisations of $W$ over $S$. The point being that by hypothesis $\pi$ is a $S$-linear homotopy equivalence, and we show that by perturbation any choice of homotopy $h$ in (\ref{eq:koszul_perturb1}) gives rise to an $S$-linear homotopy inverse to $1 \otimes \pi$. Since it is not likely to cause confusion, from now on we simply write $\pi$ for $1 \otimes \pi$.

We fix a homogeneous $R$-basis $\{ \xi_i \}_{i \in I}$ for $X$, used implicitly whenever we introduce an $S$-linear map, say $1 \otimes h$ on $X \otimes \skos(\bold{t})$, where $h$ is only $S$-linear, see (\ref{eq:pre_perturb_htm11}). 

\begin{proposition}\label{prop:fromhomotopytomap} The morphism $\pi: X \otimes \skos(\bold{t}) \lto X/\bold{t} X$ is a homotopy equivalence, and there is a deformation retract datum of linear factorisations of $W$ over $S$
\begin{equation}\label{eq:koszul_perturb2}
\xymatrix@C+2pc{
(X/\bold{t} X,d) \ar@<-0.8ex>[r]_(0.45){\sigma_\infty} & (X \otimes \skos(\bold{t}), d + \delta), \ar@<-0.8ex>[l]_(0.55)\pi
} \quad h_\infty
\end{equation}
where
\begin{align*}
\sigma_\infty = \sum_{m \ge 0} ( h d )^{m} \sigma,\qquad
h_\infty = \sum_{m \ge 0} ( h d )^{m} h.
\end{align*}
\end{proposition}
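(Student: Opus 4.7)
The plan is to obtain this as a direct application of Proposition \ref{prop:perturb_to_morphism} to the deformation retract datum (\ref{eq:koszul_perturb1}), taking $(L,b) = (R/\bold{t} R, 0)$ and $(M,b) = (\skos(\bold{t}), \delta)$ viewed as linear factorisations of zero via the $\mathbb{Z}/2$-folding, with $\sigma, \pi, h$ playing the roles of $\sigma, i, h$ there. Since $\pi$ is $R$-linear while $\sigma$ and $h$ are only $S$-linear, this matches exactly the setup of Section \ref{section:homotopies_to_morphisms}, and the identities $h^2 = 0$, $h\sigma = 0$, $\pi h = 0$ are supplied by hypothesis.

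The substantive verification needed is that the perturbation $\mu = d \otimes 1$ on $X \otimes \skos(\bold{t})$ is small in the sense of Section \ref{section:perturbation}. Using the sign convention (\ref{eq:pre_perturb_htm11}) that defines $1 \otimes h$, a direct check on a homogeneous simple tensor $\xi \otimes \omega$ shows that $(d \otimes 1)(1 \otimes h)$ and $(1 \otimes h)(d \otimes 1)$ differ by the sign $(-1)^{|\xi|}$ versus $(-1)^{|\xi|+1}$, so these two odd operators anticommute. Combined with $h^2 = 0$, this yields
\[
((d \otimes 1)(1 \otimes h))^2 = -(d \otimes 1)^2 (1 \otimes h)^2 = -(d^2 \otimes 1)(1 \otimes h^2) = 0,
\]
so the perturbation is nilpotent of index at most two, and the series $A = \sum_{n \ge 0}((d \otimes 1)(1 \otimes h))^n (d \otimes 1)$ from Theorem \ref{theorem:2perp} collapses to $A = d + dhd$.

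With smallness established, Proposition \ref{prop:perturb_to_morphism} directly furnishes the deformation retract datum (\ref{eq:koszul_perturb2}). The left hand side is identified with $(X/\bold{t}X, d)$ by the same argument appearing in the proof of Proposition \ref{prop:perturb_to_morphism}: $R$-linearity of $\pi$ gives $\pi \circ (d \otimes 1) = d \circ \pi$, and combined with $\pi h = 0$ this forces $b_\infty = \pi d \sigma = d \pi \sigma = d$ and $p_\infty = \pi$. The formulas for $\sigma_\infty = \sum_{m \ge 0}(hd)^m \sigma$ and $h_\infty = \sum_{m \ge 0}(hd)^m h$ are obtained by rearranging $hA\sigma = \sum_{n \ge 0} h(dh)^n d\sigma = \sum_{m \ge 1}(hd)^m \sigma$ and likewise for $hAh$. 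Since $\pi$ admits $\sigma_\infty$ as a homotopy section, it is a homotopy equivalence in $\hf(S,W)$.

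There is no real obstacle; the work is essentially bookkeeping around the Koszul signs on the tensor product and keeping careful track of which maps are $R$-linear versus only $S$-linear. Proposition \ref{prop:perturb_to_morphism} was formulated precisely to cover this situation.
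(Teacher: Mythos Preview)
Your overall strategy matches the paper's: the proposition is an immediate application of Proposition~\ref{prop:perturb_to_morphism} to the datum (\ref{eq:koszul_perturb1}), and your bookkeeping for $b_\infty$, $p_\infty$, $\sigma_\infty$, $h_\infty$ is fine.

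However, your smallness argument contains a genuine error. You claim that $(d\otimes 1)$ and $(1\otimes h)$ anticommute, but this would require $h$ to be $R$-linear. The map $1\otimes h$ is defined in (\ref{eq:pre_perturb_htm11}) \emph{via the chosen basis} $\{\xi_i\}$: for a general homogeneous $x=\sum_i r_i\xi_i$ one has $x\otimes m=\sum_i \xi_i\otimes r_i m$ and hence $(1\otimes h)(x\otimes m)=\sum_i(-1)^{|\xi_i|}\xi_i\otimes h(r_i m)$, which is not $(-1)^{|x|}x\otimes h(m)$ unless $h$ commutes with multiplication by $r_i\in R$. In your computation of $(1\otimes h)(d\otimes 1)(\xi\otimes\omega)$ you implicitly treated $d\xi$ as though it were a basis element. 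The claimed nilpotency of index two is therefore false in general; indeed, were it true, the idempotent $e=(-1)^n\lambda_1\cdots\lambda_n\,\varepsilon(hd)^n\sigma$ of Theorem~\ref{theorem:first_attempt_e} would vanish for every $n\ge 3$.

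The correct (and simpler) reason the perturbation is small is that $h$ has $\mathbb{Z}$-degree $-1$ on the Koszul complex $\skos(\bold{t})$, which is concentrated in degrees $-n,\ldots,0$. Even though $h$ is only $S$-linear, the map $1\otimes h$ still lowers the Koszul $\mathbb{Z}$-degree by one (multiplication by elements of $R$ preserves this grading), while $d\otimes 1$ preserves it. Hence $((d\otimes 1)(1\otimes h))^{n+1}=0$. With this correction your argument goes through and agrees with the paper.
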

\begin{proof}
This follows from the perturbation argument of Proposition \ref{prop:perturb_to_morphism}.
\end{proof}

We are mainly interested in the composite of the homotopy inverse $\sigma_\infty$ of $\pi$ with the projection $\varepsilon: X \otimes \skos(\bold{t}) \lto X[n]$ discussed in Section \ref{section:koszulcpxs}. 

\begin{definition} Let $\psi$ denote the following composite in $\hf(S,W)$
\[
\psi: X/\bold{t} X \xlto{\pi^{-1}} X \otimes \skos(\bold{t}) \xlto{\varepsilon} X[n]
\]
of the homotopy inverse of $\pi$ with the projection $\varepsilon$. For any choice of deformation retract (\ref{eq:koszul_perturb1}) the proposition gives us an explicit representative $\sigma_\infty$ for $\pi^{-1}$, so that $\psi = \varepsilon \circ \sigma_\infty$.
\end{definition}

For each $1\le j \le n$ let $\lambda_j$ denote a null-homotopy over $R$ for the action of $t_j$ on $X$. Recall from Section \ref{section:koszulcpxs} that there is a morphism $\vartheta': X[n] \lto X \otimes \skos(\bold{t})$ in the category of linear factorisations of $W$ over $R$ with $\varepsilon \circ \vartheta' = 1$, and we write $\vartheta$ for the composite $\vartheta = \pi \circ \vartheta' = (-1)^n \lambda_1 \cdots \lambda_n$. Consider the following diagram of morphisms of linear factorisations of $W$ over $S$
\[
\xymatrix@+3pc{
X/\bold{t} X \ar@<-0.8ex>[r]_{\sigma_\infty} \ar@/_3pc/[rr]_{\psi} & X \otimes \skos(\bold{t}) \ar@<-0.8ex>[l]_{\pi} \ar@<-0.8ex>[r]_(0.55){\varepsilon} & X[n]. \ar@/_3pc/[ll]_{\vartheta} \ar@<-0.8ex>[l]_(0.45){\vartheta'}
}
\]

\begin{definition} Let $e$ denote the endomorphism $\vartheta \circ \psi$ of $X/\bold{t} X$.
\end{definition}

\begin{theorem}\label{theorem:first_attempt_e} We have $\psi \circ \vartheta = 1$ and hence $e$ is an idempotent endomorphism of $X/\bold{t} X$ in $\hf(S,W)$. Given a deformation retract (\ref{eq:koszul_perturb1}) and homotopies $\lambda_i$ on $X$ such that $\lambda_i \cdot d + d \cdot \lambda_i = t_i \cdot 1_X$ we have
\[
e = (-1)^n \lambda_1 \cdots \lambda_n \varepsilon (hd)^n \sigma.
\]
\end{theorem}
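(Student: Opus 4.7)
The plan is first to establish $\psi \circ \vartheta = 1$ in $\hf(S,W)$, from which the idempotency $e^2 = \vartheta(\psi\vartheta)\psi = \vartheta\psi = e$ is immediate. Unfolding the definitions gives
\[
\psi \circ \vartheta = \varepsilon \circ \sigma_\infty \circ \pi \circ \vartheta'.
\]
Proposition \ref{prop:fromhomotopytomap} furnishes the deformation retract identity $\sigma_\infty \circ \pi = 1 + (d+\delta) h_\infty + h_\infty (d+\delta)$, so $\sigma_\infty \circ \pi \simeq 1$ in the homotopy category. Combined with the equality $\varepsilon \circ \vartheta' = 1$ established in Section \ref{section:koszulcpxs}, this yields $\psi \circ \vartheta \simeq \varepsilon \circ \vartheta' = 1$ as required, and the idempotency of $e$ follows formally.

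For the explicit formula I would substitute $\psi = \varepsilon \circ \sigma_\infty$ and $\vartheta = \pi \circ \vartheta' = (-1)^n \lambda_1 \cdots \lambda_n$ (the latter recorded at the end of Section \ref{section:koszulcpxs}) into $e = \vartheta \circ \psi$ to obtain
\[
e = (-1)^n \lambda_1 \cdots \lambda_n \circ \varepsilon \circ \sum_{m \ge 0} (hd)^m \sigma.
\]
The crucial observation is that only the $m = n$ summand survives. Grade $\skos(\bold{t}) = \wedge F$ by wedge degree: the map $\sigma$ lands in wedge degree zero (since it is a cochain map from the complex $R/\bold{t} R$ concentrated in cohomological degree zero), the operator $d$ leaves the Koszul factor untouched, and $h$ raises wedge degree by exactly one (it has cohomological degree $-1$, and the wedge grading is the negative of the cohomological grading on $\wedge F$). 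Hence $(hd)^m \sigma$ lies entirely in wedge degree $m$, and because $\varepsilon$ picks out wedge degree $n$, all summands with $m \ne n$ are annihilated. The sum collapses to $\varepsilon \circ (hd)^n \sigma$, producing the claimed formula.

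The main difficulty is bookkeeping rather than conceptual: one must carefully track sign and shift conventions through the interplay of cohomological degree, wedge degree and the $\mathbb{Z}/2$-folding, and check that the Koszul sign appearing in the definition of $\varepsilon$ in \eqref{eq:augmentation_epsilon} does not introduce spurious factors when the surviving term is extracted from $\sigma_\infty$. Once these conventions are pinned down the argument is essentially formal, driven by the deformation retract of Proposition \ref{prop:fromhomotopytomap} together with the wedge-degree filtration of $\skos(\bold{t})$.
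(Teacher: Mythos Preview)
Your proposal is correct and follows exactly the paper's approach: the paper's proof consists of the single observation that in $\psi = \varepsilon \circ \sigma_\infty$ only the term $(hd)^n\sigma$ survives the projection $\varepsilon$, which is precisely your wedge-degree argument, while $\psi\circ\vartheta = 1$ is taken as already established from $\sigma_\infty\circ\pi\simeq 1$ and $\varepsilon\circ\vartheta'=1$. Your write-up simply supplies the details the paper leaves implicit.
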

\begin{proof}
We need only observe that in $\psi = \varepsilon \circ \sigma_\infty$ only $(h d)^n \sigma$ survives the projection.
\end{proof}

It is clear that $e$ is independent, up to homotopy, of the deformation retract datum (\ref{eq:koszul_perturb1}) and our chosen homogeneous basis of $X$, as these only enter the construction via $\sigma_\infty$ which is well-defined up to homotopy as the inverse of $\pi$. Clearly if $R/\bold{t} R$ is a finite rank free $S$-module then $X/\bold{t} X$ is a finite rank matrix factorisation of $W$ over $S$. It is also worth noting that by remarks in Section \ref{section:koszulcpxs} there is a homotopy equivalence over $S$
\[
X/\bold{t} X \cong (\varphi_*(X) \oplus \varphi_*(X)[1])^{\oplus 2^{n-1}}.
\]
Note that the above description of $e$ holds without any assumption on the characteristic. If we settle for the case where $S$ is a $\mathbb{Q}$-algebra, and make some other mild assumptions about $R$, we obtain the formula for $e$ given in the introduction. This is the subject of the next several sections.

\begin{remark} Suppose that for $i > 0$ there are deformation retracts (\ref{eq:koszul_perturb1}) with $\bold{t}^i = \{t_1^i, \ldots, t_n^i\}$ in place of $\bold{t}$, and that $R/\bold{t}^i R$ is a finite rank free $S$-module. If $Y$ is a finite rank matrix factorisation of $W$ over $R$ then it is compact in $\HMF(R,W)$ and hence $\uHom(Y,-)$ commutes with homotopy colimits \cite[Lemma 2.8]{Neeman96}. With reference to the system (\ref{eq:koszul_sequence}), it follows that
\begin{align*}
\uHom(Y, \holim_i X \otimes \skos(\bold{t}^i)) &\cong \operatorname{colim}_i \uHom(Y, X \otimes \skos(\bold{t}^i))\\
&\cong \uHom(Y, \operatorname{colim}_i X \otimes \skos(\bold{t}^i))\\
&\cong \uHom(Y, X \otimes \skos_\infty(\bold{t}))\\
&\cong \uHom(Y, X).
\end{align*}
If the finite rank matrix factorisations compactly generate $\HMF(R,W)$ (say $R$ is a complete regular local ring and $R/W$ has an isolated singularity, see \cite{dyck4}) then we may deduce from this calculation that $X$ is the homotopy colimit of the $X \otimes \skos(\bold{t}^i)$, and since the pushforward functor commutes with homotopy colimits we conclude that $X$ is the homotopy colimit
\[
X = \holim_i X/\bold{t}^i X = \holim_i\left( \xymatrix@C+1pc{ X/\bold{t} X \ar[r]^{t_1 \cdots t_n} & X/\bold{t}^2 X \ar[r]^{t_1 \cdots t_n} & X/\bold{t}^3 X \ar[r] & \cdots} \right)
\]
of finite rank matrix factorisations in $\HMF(S,W)$. As we have already seen, the morphism from the first term $X/\bold{t} X$ in this system to the homotopy colimit is a split epimorphism.
\end{remark}

Let us write $\vartheta_X$ and $\psi_X$ for the morphisms constructed above. It is clear from the construction that $\psi_X$ is natural, since both $\pi$ and $\varepsilon$ are natural. Consider a second finite rank matrix factorisation $Y$ of $W$ on which the $t_j$ act null-homotopically, suppose with null-homotopies $\mu_j$ giving rise to $\vartheta_{Y}, \psi_{Y}$.

\begin{lemma}\label{lemma:nate} Given a morphism $\alpha: X \lto Y$ of matrix factorisations over $R$ the diagram
\[
\xymatrix@C+1pc{
X[n] \ar[d]_{\vartheta_X} \ar[r]^{\alpha[n]} & Y[n]\\
X/\bold{t} X \ar[r]_{\alpha} & Y/\bold{t} Y \ar[u]_{\psi_{Y}}
}
\]
commutes up to homotopy.
\end{lemma}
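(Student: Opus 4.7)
The plan is to factor both $\vartheta_X$ and $\psi_Y$ through the intermediate factorisations $X \otimes \skos(\bold{t})$ and $Y \otimes \skos(\bold{t})$, where naturality of the constituent maps is manifest or nearly so. Recall $\vartheta_X = \pi_X \circ \vartheta_X'$ with $\vartheta_X' = (\ud t_1 - \lambda_1) \cdots (\ud t_n - \lambda_n)$, and $\psi_Y = \varepsilon_Y \circ \sigma_{Y,\infty}$, where $\sigma_{Y,\infty}$ is a homotopy inverse to $\pi_Y$ provided by Proposition \ref{prop:fromhomotopytomap}. The idea is to reduce everything to two strict naturalities plus the identity $\sigma_{Y,\infty} \circ \pi_Y \simeq 1$.

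First I would observe that the augmentation maps $\pi$ and $\varepsilon$ are strictly natural in the matrix factorisation: since $\pi = 1 \otimes \pi_{\text{aug}}$ and $\varepsilon$ is a signed projection onto the top Koszul component, we have on the nose
\[
\pi_Y \circ (\alpha \otimes 1) = \alpha \circ \pi_X, \qquad \varepsilon_Y \circ (\alpha \otimes 1) = \alpha[n] \circ \varepsilon_X.
\]
The map $\vartheta'_X$ is also strictly a section of $\varepsilon_X$ by the lemma in Section \ref{section:koszulcpxs}.

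The computation then runs as follows. Starting from $\psi_Y \circ \alpha \circ \vartheta_X = \varepsilon_Y \circ \sigma_{Y,\infty} \circ \alpha \circ \pi_X \circ \vartheta_X'$, I would insert $\alpha \circ \pi_X = \pi_Y \circ (\alpha \otimes 1)$ to obtain $\varepsilon_Y \circ \sigma_{Y,\infty} \circ \pi_Y \circ (\alpha \otimes 1) \circ \vartheta_X'$. Since $\sigma_{Y,\infty}$ is a homotopy equivalence with homotopy inverse $\pi_Y$ and $\pi_Y \circ \sigma_{Y,\infty} = 1$, the composite $\sigma_{Y,\infty} \circ \pi_Y$ is homotopic to the identity via $h_{Y,\infty}$, so we can drop it up to homotopy. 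What remains is $\varepsilon_Y \circ (\alpha \otimes 1) \circ \vartheta_X' = \alpha[n] \circ \varepsilon_X \circ \vartheta_X' = \alpha[n]$ by the naturality of $\varepsilon$ and the strict identity $\varepsilon_X \circ \vartheta_X' = 1$.

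The only subtle point, and the one that merits an explicit word, is that the chosen null-homotopies $\lambda_j$ and $\mu_j$ need not be compatible under $\alpha$, so $\alpha \otimes 1$ need not intertwine $\vartheta_X'$ and $\vartheta_Y'$ on the nose; this is why we route through $X \otimes \skos(\bold{t})$ and $Y \otimes \skos(\bold{t})$ and only invoke homotopy commutativity at the single step where $\sigma_{Y,\infty} \circ \pi_Y$ is collapsed. The argument is also independent of the choice of deformation retract datum \eqref{eq:koszul_perturb1}, since the conclusion is stated only up to homotopy and $\sigma_{Y,\infty}$ is determined up to homotopy by being a one-sided inverse to $\pi_Y$.
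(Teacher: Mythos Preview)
Your proof is correct and is essentially the paper's one-line argument unfolded: the paper observes just before the lemma that $\psi$ is natural (since $\pi$ and $\varepsilon$ are), and then writes $\psi_Y \circ \alpha \circ \vartheta_X = \alpha[n] \circ \psi_X \circ \vartheta_X = \alpha[n]$. Your version makes explicit the step where naturality of $\psi$ comes from, by routing through $X \otimes \skos(\bold{t})$ and collapsing $\sigma_{Y,\infty}\circ\pi_Y$ to the identity up to homotopy; this is exactly the content of ``$\pi^{-1}$ is natural in the homotopy category because $\pi$ is strictly natural and invertible there.''
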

\begin{proof}
We have $\psi_{Y} \circ \alpha \circ \vartheta_X = \alpha[n] \circ \psi_X \circ \vartheta_X = \alpha[n]$.
\end{proof}

It is useful to know that pushing forward commutes with completion:

\begin{remark}\label{remark:completionandpush} Let $\psi: R \lto R'$ be a flat ring morphism inducing an isomorphism $R/\bold{t} R \cong R'/\bold{t} R'$. For example $R$ could be noetherian and $\psi$ the $\bold{t} R$-adic completion. The image of $\bold{t}$ is a quasi-regular sequence in $R'$ and $X' = X \otimes_R R'$ is a finite rank matrix factorisation of $W$ over $R'$ with homotopies $\lambda_i' = \lambda_i \otimes_R R'$. Assume there is a deformation retract of $\mathbb{Z}$-graded complexes over $k$
\begin{equation}\label{eq:koszul_perturb12}
\xymatrix@C+2pc{
(R'/\bold{t} R',0) \ar@<-0.8ex>[r]_{\sigma'} & (\skos_{R'}(\bold{t}), \delta), \ar@<-0.8ex>[l]_{\pi'}
} \quad h
\end{equation}
with $h^2 = 0, h\sigma' = 0$ and $\pi' h = 0$. Run the above construction with $R'$ in place of $R$ and let $\vartheta_{R'}, \psi_{R'}$ and $e' = \vartheta_{R'} \circ \psi_{R'}$ be the morphisms produced. With $X \lto X'$ the canonical morphism it is clear from the construction that the two squares implicit in the diagram
\[
\xymatrix@C+3pc@R+1pc{
X'/\bold{t} X' \ar@<-0.8ex>[r]_{\psi_{R'}} & X'[n] \ar@<-0.8ex>[l]_{\vartheta_{R'}}\\
X/\bold{t} X \ar[u]^{\cong}\ar@<-0.8ex>[r]_{\psi} & X[n] \ar@<-0.8ex>[l]_{\vartheta} \ar[u]
}
\]
commute in $\hf(S,W)$. Hence the two constructions produce the same idempotent $e = e'$ on $X/\bold{t} X$ and the canonical map $\varphi_*(X) \lto (\psi\varphi)_*(X')$ is a homotopy equivalence over $S$.
\end{remark}

\section{Residues and Traces}\label{section:residuesandtraces}

The residue symbol was introduced by Grothendieck as part of his theory of duality for algebraic varieties \cite[III.9]{ResiduesDuality}. The properties of residues are stated in \cite{ResiduesDuality} but there are no proofs; in the same setting of global duality more detailed treatments were given later by several authors including Lipman \cite{Lipman84} and Conrad \cite{Conrad00}. In \cite{Lipman87} Lipman gives an elementary approach to the residue symbol via Hochschild (co)homology, and this is the definition best suited to our needs. We slightly rephrase his definition using connections, in order to make the relationship to the Atiyah class transparent in the sequel.

In this section $\varphi: S \lto R$ is a ring morphism and $\bold{t} = \{t_1,\ldots,t_n\}$ is a quasi-regular sequence in $R$ such that $R/\bold{t} R$ is a finitely generated projective $S$-module. Set $S[\bold{t}] = S[t_1,\ldots,t_n]$ with the $t_i$ as formal variables, so that $R$ is a $S[\bold{t}]$-algebra in the obvious way. We write $\Omega_{S[\bold{t}]/S} = \wedge \Omega^1_{S[\bold{t}]/S}$ and grade the exterior algebra by $| \ud t_i| = -1$, so that $\Omega^i_{S[\bold{t}]/S}$ sits in degree $-i$. We assume that $R$ admits a flat $S$-linear connection
\begin{equation}
\nabla^0: R \lto R \otimes_{S[\bold{t}]} \Omega^1_{S[\bold{t}]/S}
\end{equation}
with the property that
\begin{equation}\label{eq:condition_a3}
\Ker(\nabla^0) + \bold{t} R = R.
\end{equation}
This hypothesis, that the sections constant with respect to $\nabla^0$ generate $R/\bold{t} R$, is very mild and is satisfied in the natural examples. We use it in order to prove that the residue symbol defined here agrees with the one in \cite{Lipman87}. In the following we denote  the extension of $\nabla^0$ to a $S$-linear operator on $R \otimes_{S[\bold{t}]} \Omega_{S[\bold{t}]/S}$ by $\nabla$. We write $\partial/\partial t_j$ for the $S$-linear map
\begin{equation}\label{eq:partialpartialf}
R \xlto{\nabla^0} R \otimes_{S[\bold{t}]} \Omega^1_{S[\bold{t}]/S} \xlto{(\ud t_j)^*} R,
\end{equation}
and with this notation $\nabla$ is given for $r \in R$ and $\omega \in \Omega_{S[\bold{t}]/S}$ by
\begin{equation}\label{eq:partialf_jprop2}
\nabla(r \cdot \omega) = \sum_{j=1}^n \partial /\partial t_j(r) \cdot \ud t_j \wedge \omega.
\end{equation}
Observe that, in contrast to Definition \ref{defn:connections} where connections are maps of degree $+1$, the opposite convention adopted here for the grading on $\Omega_{S[\bold{t}]/S}$ means that $\nabla$ now has degree $-1$. This choice is dictated by the fact that $\nabla$ is going to play the role of a homotopy in what follows.

\begin{example}\label{example:globalringnabla} If $R$ is a free $S[\bold{t}]$-module of finite rank then it admits a flat connection $\nabla^0$ satisfying (\ref{eq:condition_a3}), constructed by extending the K\" ahler differential on $S[\bold{t}]$. For example, suppose $k = S$ is a field and let $R = \oplus_{n \ge 0} R_n$ be a finitely generated $k$-algebra with $R_0 = k$ and let $\bold{t}$ be a homogeneous system of parameters, that is, suppose that each $t_i$ is homogeneous and that the Krull dimension of $R/\bold{t} R$ is zero. Then by Noether normalisation $R$ is a finitely generated $k[\bold{t}]$-module, and if $R$ is further Cohen-Macaulay (e.g. a polynomial ring) then $R$ is free over $k[\bold{t}]$.
\end{example}

\begin{example}\label{example:localringnabla} If $R$ is separated and complete in its $\bold{t} R$-adic topology then $\nabla^0$ satisfying (\ref{eq:condition_a3}) can be constructed from any $S$-linear section of the canonical map $R \lto R/\bold{t} R$, see Appendix \ref{section:derhamsplit}.
\end{example}

For any element $r \in R$ we also write $r$ for the endomorphism $r \cdot 1_M$ of an $R$-module $M$. %

\begin{lemma} For $r \in R$ the map $[\nabla, r]$ is $S[\bold{t}]$-linear on $R \otimes_{S[\bold{t}]} \Omega_{S[\bold{t}]/S}$.
\end{lemma}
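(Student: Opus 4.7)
The plan is to verify $S[\bold{t}]$-linearity by a short direct calculation, amounting to the familiar principle that the commutator of a connection with a module endomorphism is base-linear. Since $\nabla$ is $S$-linear and multiplication by $r$ is $R$-linear, the commutator $[\nabla,r]$ is automatically $S$-linear, so it suffices to show it commutes with multiplication by every $a \in S[\bold{t}]$.

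First I would record the Leibniz rule for $\nabla$ in exactly the form needed: for $a \in S[\bold{t}]$ and $y \in R \otimes_{S[\bold{t}]} \Omega_{S[\bold{t}]/S}$,
\[
\nabla(ay) \;=\; a\nabla(y) + \ud a \wedge y.
\]
This falls out of the explicit formula (\ref{eq:partialf_jprop2}) together with the identity $\partial/\partial t_j(as) = a\cdot\partial/\partial t_j(s) + s\cdot \partial a/\partial t_j$, which is obtained by taking the $\ud t_j$-component of the connection rule $\nabla^0(as) = a\nabla^0(s) + s \otimes \ud a$ for $\nabla^0$ as an $S$-linear connection on $R$ over $S[\bold{t}]$.

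Using the commutativity $ra = ar$ in $R$, I then compute
\[
[\nabla,r](ay) \;=\; \nabla(a\cdot ry) - r\nabla(ay) \;=\; a\nabla(ry) + \ud a \wedge (ry) \;-\; ra\nabla(y) - r(\ud a \wedge y).
\]
Multiplication by $r \in R$ on $R \otimes_{S[\bold{t}]} \Omega_{S[\bold{t}]/S}$ acts on the first tensor factor while exterior multiplication by $\ud a$ acts on the second, so the two commute and the $\ud a$-terms cancel, leaving $a\bigl(\nabla(ry) - r\nabla(y)\bigr) = a \cdot [\nabla,r](y)$, as required. There is no genuine obstacle here; the content of the statement is simply that although $\nabla$ itself fails to be $S[\bold{t}]$-linear, its deviation from $S[\bold{t}]$-linearity is the Leibniz term $\ud a \wedge y$, which does not involve $r$ and therefore disappears upon commuting with multiplication by $r$.
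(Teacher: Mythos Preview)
Your proof is correct and takes essentially the same approach as the paper: the paper's one-line proof simply says ``One proves that $[\nabla,r]$ commutes with each $t_i$ using the Leibniz rule,'' and you have written out exactly that Leibniz-rule cancellation, choosing to verify it for an arbitrary $a \in S[\bold{t}]$ rather than only the generators $t_i$.
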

\begin{proof}
One proves that $[\nabla,r]$ commutes with each $t_i$ using the Leibniz rule.
\end{proof}

Given $s,r_1,\ldots,r_n \in R$ the $S[\bold{t}]$-linear map $\gamma = s [\nabla, r_1] \cdots [\nabla, r_n]$ on $R \otimes_{S[\bold{t}]} \Omega_{S[\bold{t}]/S}$ has degree $-n$ and therefore defines a map from $R$ to $R \otimes_{S[\bold{t}]} \Omega^n_{S[\bold{t}]/S}$. Trivialising $\Omega^n_{S[\bold{t}]/S}$ using the basis $\ud t_1 \wedge \cdots \wedge \ud t_n$, we view $\gamma$ as a $S[\bold{t}]$-linear endomorphism of $R$, which induces a $S$-linear endomorphism of $R/\bold{t} R$.

\begin{definition}\label{defn:residuesymbol} For $s,r_1,\ldots,r_n \in R$ the \emph{residue symbol} is a $S$-linear trace over $R/\bold{t} R$ 
\begin{equation}\label{eq:residuesymbol1}
\Res \Bigg[ \begin{matrix} s \cdot \ud r_1 \cdots \ud r_n \\ t_1, \ldots, t_n \end{matrix} \Bigg] = \tr_S\big( s [\nabla, r_1] \cdots [\nabla, r_n] \big) \in S.
\end{equation}
If we want to emphasise the rings, we will write $\Ress{R/S}\big[ - \big]$ for the residue symbol.
\end{definition}

\begin{lemma}\label{lemma:reformulateressym} For $s,r_1,\ldots,r_n \in R$ we have
\begin{equation}\label{eq:lemmaaltres}
\Res \Bigg[ \begin{matrix} s \cdot \ud r_1 \cdots \ud r_n \\ t_1, \ldots, t_n \end{matrix} \Bigg] = \sum_{\tau \in S_n} \sgn(\tau) \tr_S\big( s[\partial/\partial t_{\tau(1)},r_1] \cdots [\partial/\partial t_{\tau(n)},r_n] \big).
\end{equation}
\end{lemma}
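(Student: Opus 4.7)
The plan is to unfold the definition of the operator $s[\nabla, r_1]\cdots[\nabla, r_n]$ on $R \otimes_{S[\bold{t}]} \Omega_{S[\bold{t}]/S}$, express each individual commutator $[\nabla, r_i]$ in terms of the partial derivatives $\partial/\partial t_j$ introduced in (\ref{eq:partialpartialf}), and then extract the coefficient of $\ud t_1 \wedge \cdots \wedge \ud t_n$. The alternating nature of the wedge product forces a sum over permutations with signs.

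First I would deduce from (\ref{eq:partialf_jprop2}) the identity
\[
[\nabla, r] = \sum_{j=1}^{n} [\partial/\partial t_j, r] \cdot \bigl(\ud t_j \wedge -\bigr)
\]
as $S[\bold{t}]$-linear operators on $R \otimes_{S[\bold{t}]} \Omega_{S[\bold{t}]/S}$, for any $r \in R$. To verify this it suffices to evaluate both sides on an element of the form $r' \cdot \omega$ where $\omega$ is a pure wedge in the $\ud t_j$'s (so that $\ud \omega = 0$): formula (\ref{eq:partialf_jprop2}) immediately gives $\sum_j \bigl(\partial/\partial t_j(r r') - r \, \partial/\partial t_j(r')\bigr) \ud t_j \wedge \omega$, which is the right hand side. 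The Leibniz rule for $\nabla^0$ with respect to the $S[\bold{t}]$-module structure on $R$ shows that each $[\partial/\partial t_j, r]$ is $S[\bold{t}]$-linear on $R$, so it commutes through subsequent wedge multiplications and the composition of the $[\nabla, r_i]$ separates cleanly into an $R$-factor and a wedge-factor.

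Next I would iterate this identity. Applying $[\nabla, r_n]$ to an element $r' \in R = R \otimes \Omega^0$, then $[\nabla, r_{n-1}], \ldots, [\nabla, r_1]$ in turn, one obtains
\[
[\nabla, r_1]\cdots [\nabla, r_n](r') = \sum_{j_1,\ldots,j_n = 1}^{n} [\partial/\partial t_{j_1}, r_1]\cdots[\partial/\partial t_{j_n}, r_n](r') \cdot \ud t_{j_1} \wedge \cdots \wedge \ud t_{j_n}.
\]
In the trivialisation of $\Omega^n_{S[\bold{t}]/S}$ by $\ud t_1 \wedge \cdots \wedge \ud t_n$, only those tuples $(j_1,\ldots,j_n) = (\tau(1),\ldots,\tau(n))$ for $\tau \in S_n$ contribute, each picking up the sign $\sgn(\tau)$. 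Multiplying by $s$, passing to $R/\bold{t} R$, and taking the $S$-linear trace gives the formula in the lemma.

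There is no genuine obstacle here; the argument is essentially careful bookkeeping. The only delicate point is ensuring that the commutators $[\partial/\partial t_j, r_i]$ really act only on the $R$-factor so that they pass through the $\ud t_{j_k} \wedge -$ operators, which, as noted above, follows from the $S[\bold{t}]$-linearity established in the paragraph preceding Definition \ref{defn:residuesymbol}.
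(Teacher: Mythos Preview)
Your proposal is correct and is precisely the ``direct calculation from (\ref{eq:partialf_jprop2})'' that the paper invokes in its one-line proof; you have simply filled in the bookkeeping that the paper leaves to the reader, including the observation that $[\partial/\partial t_j, r]$ is $S[\bold{t}]$-linear.
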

\begin{proof}
This follows by direct calculation from (\ref{eq:partialf_jprop2}). Notice that for $r \in R$ the commutator $[\partial/\partial t_i, r]$ is $S[\bold{t}]$-linear on $R$, and hence defines a $S$-linear map on $R/\bold{t} R$.
\end{proof}

It is easy to see that this definition agrees with Lipman's in \cite{Lipman87}, and for reader's convenience we explain the details in Appendix \ref{section:lipmanres}. In particular, the residue symbol we define is independent of the choice of connection $\nabla^0$, which we can therefore omit from the notation. We will use standard properties of the residue symbol, particularly the transition formula of \cite[(2.8)]{Lipman87}. In the elegant approach of  \emph{ibid.} residues are defined via the action of Hochschild cohomology on Hochschild homology, and before proceeding we want to briefly sketch how this works. 

If $M$ is a left module over $R^e = R \otimes_S R$ (i.e. an $R$-$R$-bimodule with compatible $S$-actions) then evaluating cocyles on cycles yields a natural $R$-linear action for $q \ge 0$
\begin{equation}
\rho^q: \HH^q(R, M) \otimes_R \HH_q(R,R) \lto \HH_0(R,M).
\end{equation}
If we take the bimodule $M = \Hom_S(P,P)$ and set $I = \bold{t} R, P = R/\bold{t} R$ and $(I/I^2)^* = \Hom_P(I/I^2, P)$ then there is a canonical map \cite[(1.8.3)]{Lipman87}
\begin{equation}\label{eq:lip_hh_first_map}
\otimes^n_R (I/I^2)^* \lto \HH^n( R, \Hom_S(P, P) ).
\end{equation}
There is also the well-known map
\begin{equation}\label{eq:lip_hh_second_map}
\theta: \Omega^n_{R/S} \lto \HH_n(R,R).
\end{equation}
The trace map $\Hom_S(P,P) \lto S$ factors through $\HH_0(R, \Hom_S(P,P)) = R \otimes_{R^e} \Hom_S(P,P)$, and together with $\rho^n$ these maps give rise to the \emph{residue homomorphism}
\[
\res: \otimes^n_R (I/I^2)^* \otimes_R \Omega^n_{R/S} \lto \HH_0(R, \Hom_S(P,P)) \lto S.
\]
Since $\bold{t}$ is quasi-regular the $t_i$ form a $P$-basis for $I/I^2$ and we write $t_i^*$ for the dual basis. If we denote by $[ t_1, \ldots, t_n ]$ the image under (\ref{eq:lip_hh_first_map}) of the tensor $t_1^* \otimes \cdots \otimes t_n^*$ then the residue homomorphism applied to $[t_1,\ldots,t_n] \otimes \theta(s \ud r_1 \wedge \cdots \wedge \ud r_n)$ is precisely the residue symbol (\ref{eq:residuesymbol1}). In particular the numerator in (\ref{eq:residuesymbol1}) can be manipulated as an element of $\Omega^n_{R/S}$.

\subsection{Contracting the Koszul complex}\label{section:contractingkoszul}

We keep the above setting, and construct from $\nabla$ a $S$-homotopy $H$ contracting the Koszul complex $\skos(\bold{t})$ onto its cohomology $R/\bold{t}R$. By this we mean that there is a $S$-linear morphism of complexes $\sigma: R/\bold{t} R \lto \skos(\bold{t})$ such that $\pi \sigma = 1$, and that $H$ is a $S$-linear homotopy between $\sigma \pi$ and $1$.

We grade the exterior algebra $R \otimes_{S[\bold{t}]} \Omega_{S[\bold{t}]/S}$ by $|\ud t_i| = -1$, and we define an $R$-linear degree $+1$ map $\delta$ on this graded algebra by contraction $\delta = (\sum_{i=1}^n t_i (\ud t_i)^*) \,\neg\, (-)$. This gives the differential in the Koszul complex $\skos(\bold{t}) = (R \otimes_{S[\bold{t}]} \Omega_{S[\bold{t}]/S}, \delta)$. We write $\pi$ both for the map $R \lto R/\bold{t} R$ and for the morphism of complexes $\skos(\bold{t}) \lto R/\bold{t} R$ and we assume that the connection $\nabla^0$ is standard in the following sense:

\begin{definition}\label{defn:standard_connection} A flat $S$-linear connection $\nabla^0: R \lto R \otimes_{S[\bold{t}]} \Omega^1_{S[\bold{t}]/S}$ is \emph{standard} if for $p > 0$ the $S$-linear map $p \cdot 1_R + \delta \nabla^0$ on $R$ is a bijection which identifies $\bold{t} R$ with $\bold{t}R$, and $\Im(\delta \nabla^0) = \bold{t} R$.
\end{definition}

This amounts to an assumption of zero characteristic: if $S$ is a $\mathbb{Q}$-algebra then the constructions of Example \ref{example:globalringnabla} and Example \ref{example:localringnabla} both produce standard flat connections. We state things this way since ``standardness'' is what we actually use, and this way we handle the local and graded cases at the same time. The next lemma follows by a standard argument of differential geometry:

\begin{lemma}\label{lemma:usualderham} For $r \in R$ and $\omega = \ud t_{i_1} \wedge \cdots \wedge \ud t_{i_p}$ with distinct indices $i_j$ we have
\[
(\delta \nabla + \nabla \delta)(r \cdot \omega ) = (p + \delta \nabla^0)(r) \cdot \omega.
\]
\end{lemma}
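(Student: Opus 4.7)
The plan is a direct computation, in the spirit of the standard Cartan/Poincar\'e-lemma identity $[\iota_X,\nabla]=L_X$ for the Euler vector field $X=\sum t_j\,\partial/\partial t_j$. Since both $\delta$ and $\nabla$ are $S$-linear, it suffices to verify the identity on a pure tensor $r\cdot\omega$ with $\omega=\ud t_{i_1}\wedge\cdots\wedge\ud t_{i_p}$.

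First I would compute $\delta\nabla(r\cdot\omega)$. By the formula $\nabla(r\cdot\omega)=\sum_j\partial/\partial t_j(r)\,\ud t_j\wedge\omega$ and the fact that $\delta$ is an $R$-linear antiderivation satisfying $\delta(\ud t_j)=t_j$, one finds
\[
\delta\nabla(r\cdot\omega)=\sum_j\partial/\partial t_j(r)\bigl(t_j\omega-\ud t_j\wedge\delta(\omega)\bigr).
\]
Next I would compute $\nabla\delta(r\cdot\omega)$. Since $\delta(r)=0$ we have $\delta(r\cdot\omega)=r\cdot\delta(\omega)=\sum_k(-1)^{k-1}r\,t_{i_k}\,\omega_k$, where $\omega_k$ omits the factor $\ud t_{i_k}$. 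Applying $\nabla$ and using the Leibniz rule for $\partial/\partial t_j$ together with the basic identity $\partial/\partial t_j(t_{i_k})=\delta_{j,i_k}$ (which follows from $\nabla^0(t_{i_k})=\ud t_{i_k}$), I would split the result into two sums:
\[
\nabla\delta(r\cdot\omega)=\sum_j\partial/\partial t_j(r)\,\ud t_j\wedge\Bigl(\sum_k(-1)^{k-1}t_{i_k}\omega_k\Bigr)+r\sum_k(-1)^{k-1}\ud t_{i_k}\wedge\omega_k.
\]
The first sum on the right is $\sum_j\partial/\partial t_j(r)\,\ud t_j\wedge\delta(\omega)$, which cancels exactly against the second term in $\delta\nabla(r\cdot\omega)$. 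For the second sum, the identity $\ud t_{i_k}\wedge\omega_k=(-1)^{k-1}\omega$ (reinsert $\ud t_{i_k}$ into its original slot) gives $r\sum_k\omega=p\,r\,\omega$.

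Adding the two pieces, everything collapses to
\[
(\delta\nabla+\nabla\delta)(r\cdot\omega)=\Bigl(p\,r+\sum_j\partial/\partial t_j(r)\,t_j\Bigr)\omega=\bigl(p+\delta\nabla^0\bigr)(r)\cdot\omega,
\]
using $\delta\nabla^0(r)=\delta\bigl(\sum_j\partial/\partial t_j(r)\,\ud t_j\bigr)=\sum_j\partial/\partial t_j(r)\,t_j$. There is no real obstacle; the only delicate point is bookkeeping the signs arising from the conventions $|\ud t_i|=-1$ and the antiderivation property of $\delta$, and verifying that $\nabla^0(t_{i_k})=\ud t_{i_k}$ under the standing setup (which follows from the Leibniz rule for a connection on $R$ as an $S[\bold{t}]$-module, once one normalises $\nabla^0(1)=0$).
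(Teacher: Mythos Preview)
Your computation is correct and is exactly the ``standard argument of differential geometry'' the paper invokes without giving details; the paper offers no proof beyond that phrase, so there is nothing further to compare. One small simplification: your closing concern about normalising $\nabla^0(1)=0$ is unnecessary, since $t_{i_k}\in S[\bold{t}]$ is a scalar for the connection and the Leibniz rule already gives $\nabla^0(t_{i_k}r)=t_{i_k}\nabla^0(r)+r\otimes\ud t_{i_k}$, hence $\partial/\partial t_j(t_{i_k}r)=t_{i_k}\,\partial/\partial t_j(r)+r\,\delta_{j,i_k}$ directly, without appealing to a product rule on $R$ or the value of $\nabla^0(1)$.
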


The de Rham differential gives a homotopy between zero and the identity on the Koszul complex up to some normalisations coming from the degree of polynomials and weights of differential forms. Our assumption that $\nabla^0$ is standard means that in negative degrees $\delta \nabla + \nabla \delta$ is invertible, and we use the inverse to define $H$ from $\nabla$.

\begin{definition} Let $H_\nabla$ be the degree $-1$ $S$-linear map on $R \otimes_{S[\bold{t}]} \Omega_{S[\bold{t}]/S}$ defined by
\[
H_\nabla = (\delta \nabla + \nabla \delta)^{-1} \nabla,
\]
and write $\tau = \delta \nabla + \nabla \delta$. We usually suppress the choice of connection and simply write $H$ for $H_\nabla$.
\end{definition}

\begin{lemma} For $\omega \in R \otimes_{S[\bold{t}]} \Omega_{S[\bold{t}]/S}$ homogeneous of nonzero degree $(H \delta + \delta H)(\omega) = \omega$.
\end{lemma}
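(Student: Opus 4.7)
The plan is to verify that $H = \tau^{-1} \nabla$ is genuinely a chain homotopy from $1$ to $0$ on the nonzero-degree part of $R \otimes_{S[\bold{t}]} \Omega_{S[\bold{t}]/S}$ by exploiting the fact that $\tau = \delta\nabla + \nabla\delta$ is, essentially tautologically, a chain map that anticommutes with both $\delta$ and $\nabla$.

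First I would make precise that $\tau^{-1}$ is well-defined on elements of nonzero degree. By Lemma \ref{lemma:usualderham}, on a homogeneous element $r \cdot \ud t_{i_1} \wedge \cdots \wedge \ud t_{i_p}$ with $p > 0$ the operator $\tau$ acts as multiplication by $(p + \delta\nabla^0)$ on the coefficient $r$. The assumption that $\nabla^0$ is standard (Definition \ref{defn:standard_connection}) says precisely that $p \cdot 1_R + \delta \nabla^0$ is a bijection of $R$ for each $p > 0$, so $\tau$ restricts to an $S$-linear automorphism in every nonzero degree, giving a well-defined $S$-linear inverse $\tau^{-1}$ in those degrees.

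The main computation is then a two-line identity. Because $\delta^2 = 0$ one has $\delta\tau = \delta\nabla\delta = \tau\delta$, and because $\nabla^0$ is flat (so $\nabla^2 = 0$) one has $\nabla\tau = \nabla\delta\nabla = \tau\nabla$. Hence $\tau^{-1}$ also commutes with $\delta$ (on the subspace where it is defined), and I compute
\begin{align*}
H\delta + \delta H &= \tau^{-1}\nabla\delta + \delta\tau^{-1}\nabla = \tau^{-1}\nabla\delta + \tau^{-1}\delta\nabla = \tau^{-1}(\nabla\delta + \delta\nabla) = \tau^{-1}\tau = 1.
\end{align*}
Applied to $\omega$ of nonzero degree this gives the required identity, provided I check that $\delta\omega$ also lies in a degree where $\tau^{-1}$ is defined (or otherwise $\delta H \omega$ vanishes anyway, and I can argue componentwise).

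The only real subtlety, and the step I would be most careful about, is the bookkeeping around degrees: $\delta$ raises degree by $1$, so for $\omega$ of degree $-p$ with $p \geq 1$, $\delta\omega$ has degree $-p+1$, which could be $0$. On the degree $0$ summand $R$, $\tau^{-1}$ is not defined, but $\nabla$ still makes sense there, so $H = \tau^{-1}\nabla$ applied to $\delta\omega$ requires decomposing $\delta\omega$ into a constant and a $\bold{t} R$ part. The standardness assumption $\Im(\delta\nabla^0) = \bold{t} R$ is what lets me absorb this: on $\bold{t} R$ one can invert $\delta\nabla^0$ restricted to that submodule and extend $H$ consistently, which is what implicitly makes the commutation $\delta\tau^{-1} = \tau^{-1}\delta$ meaningful when composed with $\nabla$. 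Once this is sorted out, the displayed identity above gives the result.
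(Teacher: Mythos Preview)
Your argument is correct and coincides with the paper's: both establish $\delta\tau = \tau\delta$ (from $\delta^2=0$) and then use the resulting commutation $\delta\tau^{-1}=\tau^{-1}\delta$ to collapse $H\delta+\delta H$ to $\tau^{-1}\tau=1$. Your final paragraph is an unnecessary worry: since $H=\tau^{-1}\nabla$ applies $\nabla$ \emph{before} $\tau^{-1}$, and $\nabla$ lowers degree by one, the operator $\tau^{-1}$ is always evaluated in strictly negative degree in both $H\delta\omega$ and $\delta H\omega$, so the degree-zero issue never arises and no splitting into constant and $\bold{t}R$ parts is needed.
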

\begin{proof}
Since $\delta \tau = \tau \delta$ we have $\delta = \tau \delta \tau^{-1}$ in negative degrees. Hence on $\omega$,
\begin{align*}
H \delta + \delta H &= \tau^{-1} \nabla \delta + \delta \tau^{-1} \nabla\\
&= \tau^{-1} \tau( \tau^{-1} \nabla \delta + \delta \tau^{-1} \nabla )\\
&= \tau^{-1} \nabla \delta + \tau^{-1} \delta \nabla\\
&= \tau^{-1} \tau = 1,
\end{align*}
as claimed.
\end{proof}

\begin{lemma} In degree zero of $R \otimes_{S[\bold{t}]} \Omega_{S[\bold{t}]/S}$ we have $(1- \delta H)(\bold{t}R) = 0$.
\end{lemma}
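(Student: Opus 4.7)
The plan is to use the standardness of $\nabla^0$, specifically the surjectivity $\Im(\delta \nabla^0) = \bold{t}R$, to reduce the claim to a direct calculation with the flatness identity $\nabla^2 = 0$.

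First, given $r \in \bold{t}R$, by standardness I can pick $r' \in R$ with $r = \delta \nabla^0(r')$. Now the element $\nabla^0 r' \in R \otimes_{S[\bold{t}]} \Omega^1_{S[\bold{t}]/S}$ lives in degree $-1$, where $\tau = \delta\nabla + \nabla\delta$ acts (by Lemma \ref{lemma:usualderham}) as $1 + \delta\nabla^0$ on coefficients and is therefore invertible by standardness; so $\tau^{-1}$ and hence $H = \tau^{-1}\nabla$ make sense on $\nabla^0 r'$ and on $r$.

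The crux is then to compute $\nabla(r)$ and show it equals $\tau(\nabla^0 r')$. Using the definition of $\tau$ and flatness of $\nabla$:
\[
\nabla(r) = \nabla \delta \nabla^0(r') = (\delta\nabla + \nabla\delta)(\nabla^0 r') - \delta \nabla(\nabla^0 r') = \tau(\nabla^0 r') - \delta \nabla^2(r') = \tau(\nabla^0 r'),
\]
since $\nabla^2 = 0$. Applying $\tau^{-1}$ in degree $-1$ gives $H(r) = \nabla^0 r'$, and therefore
\[
\delta H(r) = \delta \nabla^0(r') = r,
\]
so $(1 - \delta H)(r) = 0$, as desired.

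I do not expect any serious obstacle: once one notices the right decomposition $r = \delta\nabla^0(r')$, the flatness identity does all the work, and the only delicate point is checking that $\tau^{-1}$ is legitimately applied in degree $-1$, which is guaranteed by the standardness of $\nabla^0$. The argument is also independent of the choice of $r'$, as it must be, since the outcome $\delta H(r) = r$ depends only on $r$.
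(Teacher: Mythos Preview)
Your proof is correct and follows essentially the same approach as the paper's: both start from standardness to write $r = \delta\nabla^0(r')$, then use the identity $\nabla\delta = \tau - \delta\nabla$ together with flatness to reduce $\delta H(r)$ to $\delta\nabla^0(r') = r$. The only cosmetic difference is that you first isolate $H(r) = \nabla^0 r'$ and then apply $\delta$, whereas the paper manipulates $\delta\tau^{-1}\nabla^0\delta\nabla^0$ directly.
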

\begin{proof}
Given $x \in \bold{t} R$ we have $x = \delta \nabla^0(r)$ for some $r \in R$, since $\nabla^0$ is standard. Hence
\[
(1 - \delta H)(x) = (1 - \delta H) \delta \nabla^0(r) = \delta \nabla^0(r) - \delta\tau^{-1} \nabla^0 \delta \nabla^0(r).
\]
But $\nabla \delta = \tau - \delta \nabla$ so $\delta \tau^{-1} \nabla^0 \delta \nabla^0 = \delta \tau^{-1}(\tau - \delta \nabla^1) \nabla^0 = \delta \nabla^0$, so $(1 - \delta H)(x) = 0$.
\end{proof}

\begin{lemma} $H^2 = 0$ on $R \otimes_{S[\bold{t}]} \Omega_{S[\bold{t}]/S}$.
\end{lemma}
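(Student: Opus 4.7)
The plan is to show that $H^2 = \tau^{-1}\nabla\tau^{-1}\nabla$ vanishes by commuting the middle $\nabla$ past $\tau^{-1}$ and using flatness of $\nabla$. Two ingredients are needed: first, that $\nabla^2 = 0$ (which is the flatness assumption on the extended connection, standard from Definition \ref{defn:connections}); second, that $\nabla$ commutes with $\tau = \delta\nabla + \nabla\delta$.

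First I would verify the commutation $[\nabla,\tau] = 0$ by a direct three-line calculation:
\[
\nabla\tau = \nabla\delta\nabla + \nabla^2\delta = \nabla\delta\nabla, \qquad \tau\nabla = \delta\nabla^2 + \nabla\delta\nabla = \nabla\delta\nabla,
\]
both using $\nabla^2 = 0$. Next, since $\tau$ is invertible in every negative grading by Lemma \ref{lemma:usualderham} (this is where the \emph{standard} hypothesis is crucial, making $p + \delta\nabla^0$ bijective for $p > 0$), the identity $\nabla\tau = \tau\nabla$ transfers to $\nabla\tau^{-1} = \tau^{-1}\nabla$ on the negatively graded part, obtained by multiplying on left and right by $\tau^{-1}$.

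Finally, for any homogeneous $\omega \in R \otimes_{S[\bold{t}]} \Omega_{S[\bold{t}]/S}$ of degree $-p$ with $p \ge 0$, the element $\nabla\omega$ sits in degree $-(p+1)$, which is strictly negative, so $\tau^{-1}$ is defined there and we may commute:
\[
H^2(\omega) = \tau^{-1}\nabla\bigl(\tau^{-1}\nabla(\omega)\bigr) = \tau^{-1}\tau^{-1}\nabla\bigl(\nabla(\omega)\bigr) = \tau^{-2}\,\nabla^2(\omega) = 0.
\]
I do not expect a real obstacle here — the only subtlety is bookkeeping about where $\tau^{-1}$ is defined, which is handled by noting that every appearance of $\tau^{-1}$ lands on an element of strictly negative grading.
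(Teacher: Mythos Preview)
Your proof is correct and follows essentially the same route as the paper: both arguments reduce to the commutation $\nabla\tau = \tau\nabla$ (which you verify explicitly from $\nabla^2 = 0$) together with the invertibility of $\tau$ in negative degrees, and then use $\nabla^2 = 0$ once more to kill $H^2$. The paper phrases the final step as showing $\tau(\nabla\tau^{-1}\nabla) = \nabla^2 = 0$ and then cancelling $\tau$, while you commute $\nabla$ past $\tau^{-1}$ directly, but these are the same manipulation.
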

\begin{proof}
It suffices to show that $\nabla \tau^{-1} \nabla = 0$. But $\nabla \tau = \tau \nabla$ so $\tau \nabla \tau^{-1} \nabla = \nabla \tau \tau^{-1} \nabla = \nabla^2 = 0$ and hence $\nabla \tau^{-1} \nabla = 0$.
\end{proof}

It is now clear that there is an induced splitting of $\pi$.

\begin{lemma} There is a unique $S$-linear map $\sigma: R/\bold{t} R \lto R$ such that $\delta H + \sigma \pi = 1$. For this map $\sigma$ we have $\nabla^0 \sigma = 0$.
\end{lemma}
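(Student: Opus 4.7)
The plan is to restrict the identity $1 = \delta H + (1 - \delta H)$ in degree zero of $R \otimes_{S[\bold{t}]} \Omega_{S[\bold{t}]/S} = \skos(\bold{t})$ and argue that the complementary operator $1 - \delta H$ factors uniquely through $\pi$. First I would note that the preceding lemma tells us $(1 - \delta H)(\bold{t} R) = 0$ in degree zero, so by the universal property of the quotient $\pi \colon R \to R/\bold{t}R$ there exists a unique $S$-linear map $\sigma \colon R/\bold{t}R \to R$ satisfying $\sigma \pi = 1 - \delta H$, equivalently $\delta H + \sigma \pi = 1$. Uniqueness is automatic from the surjectivity of $\pi$: any other $\sigma'$ with $\sigma' \pi = 1 - \delta H$ satisfies $(\sigma - \sigma')\pi = 0$ and hence $\sigma = \sigma'$. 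Note also that since $\delta$ vanishes on $R$ in degree zero going out, $\sigma$ is automatically a map of complexes $R/\bold{t}R \to \skos(\bold{t})$.

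For the second assertion, the idea is to precompose with $\pi$, use the defining identity $\sigma\pi = 1 - \delta H$, and manipulate $\nabla \delta H$ using the flatness $\nabla^2 = 0$ together with the commutation relations. Explicitly, since $\nabla^0 = \nabla|_R$ lands in degree $-1$, I compute
\[
\nabla^0 \sigma \pi = \nabla - \nabla \delta H.
\]
Then using $\nabla \delta = \tau - \delta \nabla$ from the definition of $\tau$, together with $H = \tau^{-1} \nabla$, I get
\[
\nabla \delta H = \tau H - \delta \nabla H = \nabla - \delta \nabla \tau^{-1} \nabla.
\]
The key step is to observe that $\tau$ commutes with both $\delta$ and $\nabla$ — from $\delta^2 = 0$ one has $\delta \tau = \delta\nabla\delta = \tau\delta$, and from $\nabla^2 = 0$ one has $\nabla\tau = \nabla\delta\nabla = \tau\nabla$ — hence $\tau^{-1}$ commutes with both, so
\[
\delta \nabla \tau^{-1} \nabla = \delta \tau^{-1} \nabla^2 = 0.
\]
This yields $\nabla \delta H = \nabla$, so $\nabla^0 \sigma \pi = 0$, and by surjectivity of $\pi$ we conclude $\nabla^0 \sigma = 0$.

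The whole argument is essentially formal once the commutation relations $\delta \tau = \tau \delta$ and $\nabla \tau = \tau \nabla$ are in hand, and I do not anticipate any substantive obstacle; the only subtle point is keeping track of the fact that $H$ is defined only in positive codegrees via the inverse of $\tau$ (which is where the standardness of $\nabla^0$ enters), and remembering that $\delta$ vanishes on degree zero so that $\nabla^0$ agrees with $\nabla$ on $R$.
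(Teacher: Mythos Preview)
Your proof is correct and is exactly the natural argument; the paper itself offers no proof, regarding the statement as immediate from the two preceding lemmas (that $(1-\delta H)(\bold{t}R)=0$ and $H^2=0$, whose proof already contains the key identity $\nabla\tau^{-1}\nabla=0$). Your explicit verification of $\nabla^0\sigma=0$ via $\nabla\delta H=\nabla$ spells out what the paper leaves implicit, and your closing remark about tracking where $\tau^{-1}$ is defined addresses the only real subtlety.
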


Notice that even if we did not assume that $\Ker(\nabla^0) + \bold{t} R = R$, this would follow from the fact that $\nabla^0$ is standard. Viewing $\sigma$ as a morphism of $S$-complexes $R/\bold{t} R \lto \skos(\bold{t})$ we have $\pi \sigma = 1$ and $H \delta + \delta H = 1 - \sigma \pi$, so indeed $H$ is a $S$-homotopy contracting the Koszul complex onto its cohomology. We refer to $H$ as the \emph{de Rham contraction}. We can summarise the situation by stating that $H$ and $\sigma$ give a deformation retract datum
\begin{equation}\label{eq:formula_idempotent_kaoz_first}
\xymatrix@C+2pc{
(R/\bold{t} R,0) \ar@<-0.8ex>[r]_(0.4)\sigma & (R \otimes_{S[\bold{t}]} \Omega_{S[\bold{t}]/S},\delta), \ar@<-0.8ex>[l]_(0.6){\pi}
} \quad -H
\end{equation}
of $\mathbb{Z}$-graded complexes over $S$. Moreover it is clear that $H^2 = 0, H \sigma = 0$ and $\pi H = 0$.

\section{The Atiyah class}\label{section:atiyah_class}

Let $\varphi: S \lto R$ be a ring morphism and $(X,d)$ a linear factorisation over $R$ of a potential $W \in S$. Following \cite{atiyahconn, illusie} we introduce the Atiyah class of $X$ relative to this ring morphism. Let us assume that $X$ admits an $S$-linear connection (throughout, any connection on a $\mathbb{Z}/2$-graded module is a degree zero map of the underlying graded modules)
\begin{equation}\label{eq:nablatilde}
\nabla^0: X \lto X \otimes_R \Omega^1_{R/S}.
\end{equation}
Such a connection will exist, for example, if $X$ is a projective $R$-module. We make $X \otimes_R \Omega^1_{R/S}$ into a linear factorisation of $W$ with differential $d \otimes 1$.

\begin{proposition} The commutator $[d,\nabla^0]$ is an $R$-linear morphism of linear factorisations
\[
X \lto X \otimes_R \Omega^1_{R/S}[1]
\]
which is independent, up to homotopy, of the choice of connection.
\end{proposition}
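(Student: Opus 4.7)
The plan is to check the three claims---$R$-linearity, compatibility with the differentials, and independence of $\nabla^0$ up to homotopy---each by a short direct manipulation from the definitions. Nothing deep happens; the main care required is with the graded (Koszul) commutator $[d,\nabla^0] = d\nabla^0 - \nabla^0 d$, since $d$ has degree $1$ and $\nabla^0$ has degree $0$.

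For $R$-linearity, I would pick $r \in R$ and $x \in X$ and expand
\[
[d,\nabla^0](rx) = d\bigl(r\nabla^0(x) + x \otimes \ud r\bigr) - \nabla^0\bigl(r\, d(x)\bigr)
\]
using the Leibniz rule for $\nabla^0$ and the $R$-linearity of $d$ (which holds because $d$ is the differential of a linear factorisation over $R$). The term $d(x \otimes \ud r) = d(x) \otimes \ud r$ appears with one sign, and the application of the Leibniz rule on the second summand produces exactly $d(x)\otimes \ud r$ with the opposite sign. These two non-$R$-linear contributions cancel, leaving $r[d,\nabla^0](x)$.

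To see that $[d,\nabla^0]$ is a morphism $X \to X \otimes_R \Omega^1_{R/S}[1]$ of linear factorisations of $W$, I would verify the cocycle condition in $\Hom_R\bigl(X,X \otimes_R \Omega^1_{R/S}\bigr)$. Since $[d,\nabla^0]$ is odd, this amounts to showing that $(d \otimes 1) \circ [d,\nabla^0] + [d,\nabla^0] \circ d = 0$, equivalently that the graded self-bracket $[d,[d,\nabla^0]]$ vanishes. A direct expansion yields
\[
[d,[d,\nabla^0]] = d^2 \nabla^0 - \nabla^0 d^2,
\]
and since $d^2 = W \cdot 1_X$ is multiplication by a central element of $R$, this vanishes.

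For independence up to homotopy, let $\nabla^0, \widetilde{\nabla}{}^0$ be two $S$-linear connections on $X$. Their difference $\alpha = \nabla^0 - \widetilde{\nabla}{}^0$ satisfies the Leibniz rule with the same inhomogeneous term, hence is $R$-linear and even, i.e.\ a degree $0$ element of $\Hom_R(X,X \otimes_R \Omega^1_{R/S})$. The commutator is additive in $\nabla^0$, and the differential of the $\Hom$-complex of linear factorisations of zero is precisely $\beta \mapsto [d,\beta]$, so
\[
[d,\nabla^0] - [d,\widetilde{\nabla}{}^0] = [d,\alpha]
\]
is a coboundary of an even element, i.e.\ a null-homotopy of the desired odd morphism. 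Existence of a connection at all was assumed (e.g.\ when $X$ is projective over $R$). There is no serious obstacle; the only subtlety worth flagging is being consistent with the sign conventions for graded commutators and for the shift $[1]$ in the category of linear factorisations.
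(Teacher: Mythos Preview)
Your approach is the same as the paper's, and the $R$-linearity and homotopy-independence arguments are fine. There is one slip in the morphism check: you conclude $d^2\nabla^0 - \nabla^0 d^2 = 0$ from the fact that $d^2 = W\cdot 1_X$ is ``multiplication by a central element of $R$''. Centrality in the commutative ring $R$ is not the issue; $\nabla^0$ is only $S$-linear, and for a general $r \in R$ one has $\nabla^0(rx) = r\nabla^0(x) + x\otimes \ud r$, so multiplication by $r$ does \emph{not} commute with $\nabla^0$ unless $\ud_{R/S} r = 0$. The reason the argument works here is precisely that $W \in S$, so $\ud_{R/S} W = 0$ and hence $W\cdot\nabla^0 = \nabla^0\cdot W$. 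This is exactly the point the paper isolates (``we use the fact that the potential belongs to $S$ and therefore commutes with $\nabla^0$''); once you replace your justification by this one, your proof is complete and matches the paper's.
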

\begin{proof}
One checks by explicit calculation that $[d,\nabla^0]$ is $R$-linear, and independence of the homotopy is proved as in the case of $\mathbb{Z}$-graded complexes (see e.g. \cite[(3.2)]{buchweitz03}). To prove that this is a morphism, we use the fact that the potential belongs to $S$ and therefore commutes with $\nabla^0$:
\begin{align*}
d [d,\nabla^0] &= W \cdot \nabla^0 - d \nabla^0 d\\
&= \nabla^0 \cdot W - d \nabla^0 d\\
&= \nabla^0 d d - d \nabla^0 d\\
&= -[d,\nabla^0] d.
\end{align*}
\end{proof}

\begin{definition}\label{defn:atiyah_class} The morphism $\At_{R/S}(X) = [d,\nabla^0]$ is called the \emph{Atiyah class} of $X$ relative to $R/S$, or more precisely relative to $\varphi$. Iterating and using the maps $\Omega^i \otimes \Omega^j \lto \Omega^{i+j}$ we have an $R$-linear morphism of factorisations
\begin{equation}\label{eq:atiyah_class_powers}
\At_{R/S}(X)^k := [d,\nabla^0]^k: X \lto X \otimes_R \Omega^k_{R/S}[k]
\end{equation}
for $1 \le k \le n$, again independent of the choice of connection up to homotopy.
\end{definition}

We will see that it is actually the additive inverse of the Atiyah class which naturally appears in the context of residues; nonetheless the above sign convention is the standard one, for the reasons explained in \cite[(3.17)]{buchweitz03}.

\begin{remark}\label{remark:finite_rank_is_derivative} Let $\ud_{R/S}: R \lto \Omega^1_{R/S}$ be the K\"ahler differential and suppose that $X$ is a finite rank matrix factorisation. We fix a homogeneous $R$-basis $\{ \xi_i \}_{i \in I}$ for $X$, in terms of which we define a homogeneous $S$-linear connection on $X$ by $\nabla^0(r \xi_i) = \xi_i \otimes \ud_{R/S} r$. The matrix of $\At_{R/S}(X) = [d,\nabla^0]$ as a map
\[
\oplus_i R \xi_i = X \lto X \otimes_R \Omega^1_{R/S} = \oplus_i (R \xi_i \otimes \Omega^1_{R/S})
\]
is given by $-\ud_{R/S}(d_X)$ evaluated entrywise.
\end{remark}

In the context of perturbations it is also natural to define a \emph{$\mathbb{Z}/2$-graded $S$-linear connection} to be a degree zero $S$-linear map $\nablagr^0: X \lto X \otimes_R \Omega^1_{R/S}$ such that for $r \in R$ and homogeneous $x \in X$,
\[
\nablagr^0(rx) = r \nablagr^0(x) + (-1)^{|x|} x \otimes \ud r.
\]
We can uniquely extend such a map to a degree one $S$-linear map $\nablagr$ on $X \otimes_R \Omega_{R/S}$ satisfying
\begin{equation}\label{eq:extending_connections}
\nablagr( x \cdot \omega ) = \nablagr(x) \cdot \omega + (-1)^{|x|} x \otimes \ud \omega
\end{equation}
for homogeneous $x \in X \otimes_S \Omega_{R/S}$ and $\omega \in \Omega_{R/S}$. If $G$ denotes the grading operator $x \mapsto (-1)^{|x|}x$ on $X$ then the map $\nablagr^0 \lto \nablagr^0 G$ is a bijection between $\mathbb{Z}/2$-graded connections and ordinary connections, and we say that $\nablagr^0$ is \emph{flat} if $\nablagr^0 G$ is flat, or equivalently if $\nablagr^2 = 0$.

Let $\nablagr^0$ be a $\mathbb{Z}/2$-graded connection with associated ordinary connection $\nabla^0 = \nablagr^0 G$. There is a natural isomorphism which moves the suspension to the second tensor factor
\[
(X \otimes_R \Omega^1_{R/S})[1] \cong X \otimes_R (\Omega^1_{R/S}[1]), \quad x \otimes \omega \mapsto (-1)^{|x|} x \otimes \omega
\]
and it is readily checked that $\{ d, \nablagr^0 \} = d \nablagr^0 + \nablagr^0 d$ is a morphism and that

\begin{lemma}\label{lemma:diagram_two_atiyahs} The diagram
\[
\xymatrix{
& X \ar[dl]_{\At_{R/S}(X)} \ar[dr]^{-\{ d, \nablagr^0 \}}\\
X \otimes_R \Omega^1_{R/S}[1] \ar[rr]_{\cong} & & X \otimes_R (\Omega^1_{R/S}[1])
}
\]
commutes.
\end{lemma}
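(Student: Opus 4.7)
The plan is to verify the diagram by direct calculation on a homogeneous element $x \in X$, using the relation $\nabla^0 = \nablagr^0 \circ G$ stated immediately before the lemma. Since both $\nabla^0$ and $\nablagr^0$ are $S$-linear and of degree zero, evaluating on homogeneous $x$ yields $\nabla^0(x) = \nablagr^0(G(x)) = (-1)^{|x|}\nablagr^0(x)$. Because $d$ has degree $1$ so that $|d(x)| = |x|+1$, this gives
\begin{align*}
[d,\nabla^0](x)
&= (d\otimes 1)\nabla^0(x) - \nabla^0 d(x) \\
&= (-1)^{|x|}(d\otimes 1)\nablagr^0(x) - (-1)^{|x|+1}\nablagr^0 d(x) \\
&= (-1)^{|x|}\bigl((d\otimes 1)\nablagr^0(x) + \nablagr^0 d(x)\bigr) \\
&= (-1)^{|x|}\{d,\nablagr^0\}(x).
\end{align*}

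Next I would chase this through the suspension isomorphism $\iota\colon (X \otimes_R \Omega^1_{R/S})[1] \xrightarrow{\sim} X \otimes_R (\Omega^1_{R/S}[1])$ given by $\iota(y\otimes\omega) = (-1)^{|y|} y\otimes\omega$. Since $[d,\nabla^0]$ raises the $\mathbb{Z}/2$-degree on $X$ by one, every term of $[d,\nabla^0](x)$ has its $X$-component in degree $|x|+1$, so $\iota$ contributes an overall sign $(-1)^{|x|+1}$. Combining with the previous identity gives
\[
\iota\bigl([d,\nabla^0](x)\bigr) \;=\; (-1)^{|x|+1}\,(-1)^{|x|}\,\{d,\nablagr^0\}(x) \;=\; -\{d,\nablagr^0\}(x),
\]
which is precisely the claimed commutativity.

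The whole proof is a short sign computation; the only genuine point to watch is tracking the two independent sources of signs, namely the one coming from the grading operator $G$ that relates $\nabla^0$ to $\nablagr^0$ and the Koszul sign built into $\iota$ for moving the suspension past the tensor factor $X$. A minor additional check worth recording is that the naive verifications used above --- in particular that $\nabla^0 = \nablagr^0 G$ really does satisfy the classical Leibniz rule whenever $\nablagr^0$ satisfies the $\mathbb{Z}/2$-graded one --- follow from a single-line calculation with the same sign bookkeeping, so the assertion that the two notions of connection correspond is essentially the same argument. With these signs in hand no further input is required, and in particular the fact that $\{d,\nablagr^0\}$ is a morphism of linear factorisations of $W$ comes for free by transporting the analogous property of $\At_{R/S}(X)$ along $\iota$.
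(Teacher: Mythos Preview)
Your proof is correct and is precisely the routine sign computation the paper has in mind; the paper omits the argument entirely (the lemma is stated without proof, prefaced only by ``it is readily checked that''). Your careful tracking of the two sign sources --- the grading operator $G$ relating $\nabla^0$ to $\nablagr^0$, and the Koszul sign in the suspension isomorphism --- is exactly what is needed.
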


\subsection{Residues and the Atiyah class}\label{section:residues_on_objects}

Let $\varphi: S \lto R$ be a ring morphism, $\bold{t} = \{ t_1,\ldots,t_n \}$ a quasi-regular sequence in $R$ such that $R/\bold{t} R$ is a finitely generated projective $S$-module, and set $S[\bold{t}] = S[t_1,\ldots,t_n]$. We assume that $R$ admits a flat $S$-linear connection $\nabla^0: R \lto R \otimes_{S[\bold{t}]} \Omega^1_{S[\bold{t}]/S}$ with the property that $\Ker(\nabla^0) + \bold{t} R = R$. 

Let $(X,d_X)$ be a finite rank matrix factorisation over $R$ of a potential $W \in S$. The $n$-th power of the Atiyah class of $X$ relative to $S[\bold{t}]/S$ is a $S[\bold{t}]$-linear morphism of factorisations
\[
\At_{S[\bold{t}]/S}(X)^n: X \lto X \otimes_{S[\bold{t}]} \Omega^n_{S[\bold{t}]/S}[n] \cong X[n]
\]
where we trivialise $\Omega^n_{S[\bold{t}]/S}$ using the $n$-form $\ud t_1 \wedge \cdots \wedge \ud t_n$. With this trivialisation incorporated into the notation there is an induced $k$-linear map of degree $n$ on $X/\bold{t} X$, and

\begin{proposition}\label{prop:residues_on_objects} Let $\alpha$ be a degree $n$ $R$-linear map on $X$ with the property that the induced map $X/\bold{t} X \lto X/\bold{t} X[n]$ is a morphism of linear factorisations. Then
\begin{equation}\label{eq:residues_on_objects}
\str_S\big( \alpha \circ \At_{S[\bold{t}]/S}(X)^n \big) = (-1)^n \Ress{R/S}\begin{bmatrix} \str_R\!\big( \alpha \cdot \ud_{R/S}(d_X)^{\wedge n} \big) \\ t_1,\ldots,t_n \end{bmatrix}
\end{equation}
where the supertrace on the left is of a $S$-linear endomorphism of $X/\bold{t} X$.
\end{proposition}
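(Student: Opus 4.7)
The plan is to pick an explicit representative of $\At_{S[\bold{t}]/S}(X)^n$ via a convenient connection on $X$ and reduce both sides of the identity to concrete matrix-level computations which can then be compared term-by-term.

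First I would fix a homogeneous $R$-basis $\{\xi_i\}$ of $X$ and define an $S$-linear connection on $X$ relative to $S[\bold{t}]/S$ by $\nabla^0_X(r\xi_i) := \nabla^0(r)\cdot \xi_i$; this is a valid connection since $\nabla^0$ satisfies Leibniz over $S[\bold{t}]$. An adaptation of Remark \ref{remark:finite_rank_is_derivative} yields $\At_{S[\bold{t}]/S}(X)(\xi_i) = -\sum_k \nabla^0((d_X)_{ki})\xi_k$ on basis elements, and iterating describes $\At^n(\xi_i)$ in terms of $(\nabla^0(d_X))^{\wedge n}$ evaluated entrywise. Since $\At$ is only $S[\bold{t}]$-linear, its action on general elements $r\xi_i$ differs from the naive $R$-linear extension by correction terms $-\sum_k \kappa(r,(d_X)_{ki})\xi_k$, where $\kappa(a,b) := \nabla^0(ab) - a\nabla^0(b) - b\nabla^0(a)$ measures the failure of $\nabla^0$ to be an $R$-derivation.

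To compute $\str_S(\alpha \circ \At^n)$ I would pass to an $S[\bold{t}]$-basis of $X$, obtained by combining $\{\xi_i\}$ with an $S[\bold{t}]$-basis of $R$ (available in the cases of Examples \ref{example:globalringnabla} and \ref{example:localringnabla}). In this basis $\alpha \circ \At^n$ is a genuine $S[\bold{t}]$-matrix whose reduction modulo $\bold{t}$ is an $S$-linear endomorphism of $X/\bold{t}X$, and its supertrace expands as an explicit sum of $S$-traces on $R/\bold{t}R$ of products involving the matrix entries of $\alpha$ and $d_X$, the partial derivatives $\partial_j := (\ud t_j)^* \nabla^0$, and their commutators.

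For the right-hand side I would expand $\str_R(\alpha \cdot \ud_{R/S}(d_X)^{\wedge n}) \in \Omega^n_{R/S}$ as a sum $\sum_\ell s_\ell \cdot \ud r_{\ell,1} \wedge \cdots \wedge \ud r_{\ell,n}$ of pure wedge forms in matrix entries and apply Definition \ref{defn:residuesymbol} termwise. Via Lemma \ref{lemma:reformulateressym} each residue becomes $\sum_\tau \sgn(\tau)\tr_S(s_\ell [\partial_{\tau(1)},r_{\ell,1}]\cdots [\partial_{\tau(n)},r_{\ell,n}])$ acting on $R/\bold{t}R$, using only the $S[\bold{t}]$-linearity of each $[\partial_j,r]$. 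The proof then reduces to an explicit combinatorial match between the LHS expansion and this RHS expansion, with the overall sign $(-1)^n$ coming from the sign convention in Remark \ref{remark:finite_rank_is_derivative}.

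The main obstacle is that the naive formula one would initially hope for --- namely $\str_S(\alpha \circ \At^n) = (-1)^n \tr_S(\str_R(\alpha \cdot (\nabla^0(d_X))^{\wedge n})/(\ud t_1 \wedge \cdots \wedge \ud t_n))$, which would be immediate if $\nabla^0$ were a derivation on $R$ --- fails in general precisely because of the correction terms $\kappa(r,(d_X)_{ki})$. On the residue side the analogous subtlety is that the forms $\ud_{R/S}((d_X)_{ki})$ generally have components outside $R \cdot \{\ud t_1,\ldots,\ud t_n\}$, and these components interact nontrivially with the residue via the commutators $[\partial_j,\cdot]$. Showing that these two sources of non-$R$-linearity match exactly is a combinatorial computation best organized through Lipman's Hochschild-homological interpretation of the residue symbol in \cite{Lipman87}, under which both sides arise as the $S$-valued pairing between the Atiyah class (regarded as a Hochschild cocycle) and the Koszul fundamental class of $(t_1,\ldots,t_n)$; the explicit match then reduces to trace compatibility and careful tracking of Koszul signs.
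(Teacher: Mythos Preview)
Your proposal identifies a difficulty that does not actually arise, and the detour through Lipman's Hochschild machinery is unnecessary. The paper's proof is direct: the crucial step you are missing is the expansion (\ref{eq:atiyahnexpanded}), namely
\[
\At_{S[\bold{t}]/S}(X)^n = (-1)^n \sum_{\tau \in S_n} \sgn(\tau)\, [\partial_{\tau(1)}, d_X]\,[\partial_{\tau(2)}, d_X] \cdots [\partial_{\tau(n)}, d_X],
\]
where each $[\partial_j, d_X]$ is already $S[\bold{t}]$-linear. Once you have this, there are no ``correction terms'' $\kappa$ to track: the $n$-th Atiyah power is literally a sum of products of $S[\bold{t}]$-linear operators whose matrix entries in the $R$-basis $\{\xi_i\}$ are $[\partial_j, d_{X,ii'}]$, i.e.\ commutators of $\partial_j$ with multiplication by scalars. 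Passing to the $S$-basis $\{\xi_i \otimes e_z\}$ of $X/\bold{t}X$ (with $\{e_z\}$ an $S$-basis of $R/\bold{t}R$), the coefficient of $[\partial_j, d_X]$ is $[\partial_j, d_{X,ii'}]_{zz'}$, and the $S$-supertrace becomes a double sum over $(i,i_1,\ldots)$ and $(z,z_1,\ldots)$ that collapses, via Lemma~\ref{lemma:reformulateressym}, to exactly the residue on the right-hand side.

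Your worry that $\ud_{R/S}((d_X)_{ki})$ has components outside $R\cdot\{\ud t_1,\ldots,\ud t_n\}$ is also not an obstacle: the residue symbol of Definition~\ref{defn:residuesymbol} is defined on the \emph{symbol} $s\cdot \ud r_1 \cdots \ud r_n$ via $\tr_S(s[\nabla,r_1]\cdots[\nabla,r_n])$, and Lemma~\ref{lemma:reformulateressym} already rewrites this in terms of the $[\partial_j, r_k]$. So both sides are expressed through the \emph{same} commutators $[\partial_j,\cdot]$, and the matching is immediate --- no Hochschild cocycle interpretation is needed. The sign $(-1)^n$ falls out of (\ref{eq:atiyahnexpanded}) directly.
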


The proof will be given at the end of this section. Observe that the Atiyah class is relative to $S[\bold{t}]/S$ rather than $R/S$. If $R$ were the polynomial ring $S[x_1,\ldots,x_n]$ this distinction would reflect the difference between residues with denominator $t_1 \cdots t_n$ and residues with denominator $x_1 \cdots x_n$. 

We begin with some comments on the notation. Given square matrices $A, R_1, \ldots, R_n$ over $R$ the product $A \cdot \ud_{R/S}(R_1) \wedge \cdots \wedge \ud_{R/S}(R_n)$ is a matrix of $n$-forms, and the supertrace has a residue
\[
\Ress{R/S}\begin{bmatrix} \str_R\!\big( A \cdot \ud_{R/S}(R_1) \wedge \cdots \wedge \ud_{R/S}(R_n) \big) \\ t_1,\ldots,t_n \end{bmatrix} \in S.
\]
Choosing a homogeneous $R$-basis $\{ \xi_i \}_{i \in I}$ for $X$ and writing $\alpha, d_X$ as matrices gives meaning to the right hand side of (\ref{eq:residues_on_objects}), and for simplicity we fix this homogeneous basis in what follows.

Let $\nabla^0: X \lto X \otimes_{S[\bold{t}]} \Omega^1_{S[\bold{t}]/S}$ be the flat $S$-linear connection on $X$ defined by $\nabla^0( r \xi_i ) = \xi_i \cdot \nabla^0 r$ and let $\nablagr^0 = \nabla^0 G$ be the associated $\mathbb{Z}/2$-graded connection, which extends to a map $\nablagr$ on $X \otimes_{S[\bold{t}]} \Omega_{S[\bold{t}]/S}$. As our representative of the Atiyah class, we take
\[
\At_{S[\bold{t}]/S}(X) = [d_X, \nabla^0].
\]
The anticommutator $\{ d_X, \nablagr \}$ is a $S[\bold{t}]$-linear map on $X \otimes_{S[\bold{t}]} \Omega_{S[\bold{t}]/S}$. Iterating and composing with the projection $\varepsilon$ of Section \ref{eq:augmentation_epsilon} we have a degree $n$ $S[\bold{t}]$-linear map $\varepsilon \{ d_X, \nablagr \}^n: X \lto X$ and it is clear from Lemma \ref{lemma:diagram_two_atiyahs} and the definition of $\varepsilon$ in Section \ref{section:koszulcpxs} that

\begin{lemma}\label{lemma:atiyah_vs_commutator} $\At_{S[\bold{t}]/S}(X)^n = (-1)^{\binom{n+1}{2}} \varepsilon \{ d_X, \nablagr \}^n$.
\end{lemma}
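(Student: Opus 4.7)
The plan is to iterate Lemma~\ref{lemma:diagram_two_atiyahs} and carefully track the Koszul signs that arise, ultimately verifying the claim by a direct calculation in a chosen homogeneous basis of $X$.

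First, I would fix the basis-induced representative of the connection as in Remark~\ref{remark:finite_rank_is_derivative}, so that $[d_X, \nabla^0](x) = \sum_i e_i(x) \otimes \ud t_i$ where $e_i = [d_X, \partial/\partial t_i]$ raises $X$-grade by $1$. Extending $\nabla^0$ to $\nabla$ on $X \otimes_{S[\bold{t}]} \Omega_{S[\bold{t}]/S}$ by the Leibniz rule, the iterated Atiyah class $\At_{S[\bold{t}]/S}(X)^n$ is realised as $[d_X, \nabla]^n|_X$, which lands in $X \otimes \Omega^n$ and, after trivialising $\Omega^n$ via $\ud t_1 \wedge \cdots \wedge \ud t_n$, becomes an element of $X[n]$ identified with the coefficient $B(x)$ of this top form.

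Next I would relate $\{d_X, \nablagr\}$ to $[d_X, \nabla]$ on $X \otimes \Omega$. From $\nablagr^0(y) = (-1)^{|y|} \nabla^0(y)$ and the graded Leibniz rule defining $\nablagr$, a short calculation gives $\{d_X, \nablagr\}(y \otimes \omega) = (-1)^{|y|} [d_X, \nabla^0](y) \wedge \omega$ for $y \in X$ and $\omega \in \Omega$. Iterating and using that each $e_i$ raises $X$-grade by $1$, I would establish by induction on $n$ the identity
\begin{equation*}
\{d_X, \nablagr\}^n(x) = (-1)^{n|x| + \binom{n}{2}} [d_X, \nabla]^n(x)
\end{equation*}
for $x \in X$, the exponent arising by summing the signs $(-1)^{|x| + k}$ picked up at the $(k+1)$-st iteration, for $k = 0, \ldots, n-1$. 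The projection $\varepsilon(y \cdot \ud t_1 \cdots \ud t_n) = (-1)^{n|y|} y$ then contributes a further factor $(-1)^{n(|x|+n)}$, since the top-form coefficient $B(x)$ of $[d_X, \nabla]^n(x)$ lives in $X$-grade $|x| + n$. Multiplying all the signs yields $(-1)^{n^2 + \binom{n}{2}}$, which reduces modulo $2$ to $(-1)^{\binom{n+1}{2}}$ because the difference $n^2 + \binom{n}{2} - \binom{n+1}{2} = n(n-1)$ is even; since $\At_{S[\bold{t}]/S}(X)^n(x)$ corresponds to $B(x)$ under the trivialisation, this gives the claimed identity.

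The hard part will be the inductive sign tracking in the middle step. I would verify the base cases $n = 1$ (where the statement reduces to Lemma~\ref{lemma:diagram_two_atiyahs} combined with the sign of $\varepsilon$) and $n = 2$ directly to pin down the pattern, then run the induction using the recursion $\{d_X, \nablagr\}^n(x) = \{d_X, \nablagr\}(\{d_X, \nablagr\}^{n-1}(x))$ together with the explicit action of $\{d_X, \nablagr\}$ on monomials $y \otimes \omega \in X \otimes \Omega$ derived above from the graded Leibniz rule.
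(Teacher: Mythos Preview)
Your approach is correct and is precisely the one the paper indicates: the lemma is asserted to be ``clear from Lemma~\ref{lemma:diagram_two_atiyahs} and the definition of $\varepsilon$ in Section~\ref{section:koszulcpxs}'', and you have spelled out exactly that argument, iterating the sign comparison of Lemma~\ref{lemma:diagram_two_atiyahs} and collecting the Koszul sign from $\varepsilon$. The paper's subsequent expansion of $\{d_X,\nablagr\}^n$ via $\nablagr = \sum_j \partial_j \circ \ud t_j$ and the sum over $S_n$ is not an alternative proof of the lemma but rather a derivation of the explicit formula~(\ref{eq:atiyahnexpanded}); your inductive bookkeeping and their coordinate expansion are two presentations of the same sign calculation.
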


It will be useful to expand $\{ d_X, \nablagr \}$ in terms of the maps $\partial_j = \partial/\partial t_j$ and $\ud t_j$ on $X \otimes_{S[\bold{t}]} \Omega_{S[\bold{t}]/S}$, extended using the basis $\{ \xi_i\}_{i \in I}$ for $X$. Here $\ud t_j$ denotes left multiplication, which involves a sign
\[
\ud t_j \wedge ( \xi_i \otimes r \ud t_{j_1} \cdots \ud t_{j_p} ) = (-1)^{|\xi_i|} \otimes r \ud t_j \ud t_{j_1} \cdots \ud t_{j_p}.
\]
Clearly $\nablagr = \sum_{j=1}^n \partial_j \circ \ud t_j$, and $\ud t_j$ anticommutes with $d_X$. Hence $\{ \partial_j \circ \ud t_j, d_X \} = \ud t_j \circ [\partial_j, d_X]$~and
\begin{align*}
\{ d_X, \nablagr \}^n &= \sum_{\tau \in S_n} \ud t_{\tau(1)} [\partial_{\tau(1)}, d_X] \ud t_{\tau(2)}  [\partial_{\tau(2)}, d_X] \cdots \ud t_{\tau(n)}  [\partial_{\tau(n)}, d_X]\\
&= (-1)^{\binom{n+1}{2}} \sum_{\tau \in S_n} \sgn(\tau) [\partial_{\tau(1)}, d_X] [\partial_{\tau(2)}, d_X] \cdots  [\partial_{\tau(n)}, d_X] \ud t_1 \cdots \ud t_n.
\end{align*}
Evaluated on a homogeneous element $x \in X$ this has $X$-degree $|x| + n$, and the initial map wedging with $\ud t_1 \cdots \ud t_n$ contributes a sign factor $(-1)^{n|x|}$, so
\begin{equation}
\varepsilon \{ d_X, \nablagr \}^n = (-1)^{\binom{n}{2}} \sum_{\tau \in S_n} \sgn(\tau) [\partial_{\tau(1)}, d_X] [\partial_{\tau(2)}, d_X] \cdots  [\partial_{\tau(n)}, d_X]
\end{equation}
and hence finally
\begin{equation}\label{eq:atiyahnexpanded}
\At_{S[\bold{t}]/S}(X)^n = (-1)^{\binom{n+1}{2}} \varepsilon \{ d_X, \nablagr \}^n = (-1)^n \sum_{\tau \in S_n} \sgn(\tau) [\partial_{\tau(1)}, d_X] [\partial_{\tau(2)}, d_X] \cdots  [\partial_{\tau(n)}, d_X]
\end{equation}
as degree $n$ $S[\bold{t}]$-linear maps on $X$.

\begin{proof}[Proof of Proposition \ref{prop:residues_on_objects}] For simplicity we assume that $R/\bold{t} R$ is free over $S$ on some basis $\{ e_z \}_{1 \le z \le m}$, the modifications when $R/\bold{t} R$ is just projective are obvious. The $\mathbb{Z}/2$-graded $S$-module $X/\bold{t} X$ has homogeneous basis $\{ \xi_i \otimes e_z \}_{i,z}$, and the coefficient matrix with respect to this basis of the $S$-linear map $[\partial_j, d_X]$ on $X/\bold{t} X$ is
\[
[\partial_j, d_X]_{iz, i'z'} = [\partial_j, d_{X,ii'}]_{zz'}
\]
where $[\partial_j, d_{X,ii'}]$ denotes a commutator of $S$-linear maps on $R/\bold{t} R$, with $d_{X,ii'}$ acting by multiplication, and $[ - ]_{zz'}$ denotes coefficients in the $S$-basis $\{ e_z \}_z$. Similarly we view $\alpha$ as a supermatrix over $R$, and as a $S$-linear operator on $X/\bold{t} X$ the matrix is given by $\alpha_{iz,i'z'} = [ \alpha_{ii'} ]_{zz'}$, the coefficients of the operator on $R/\bold{t} R$ which multiplies by $\alpha_{ii'}$. Then using (\ref{eq:atiyahnexpanded}) and Lemma \ref{lemma:reformulateressym} we have
\begin{align*}
\str_S\!\big( \alpha \At_{S[\bold{t}]/S}(X)^n \big) &= \sum_{\tau \in S_n} \sgn(\tau)(-1)^n \str_S\big( \alpha [\partial_{\tau(1)}, d_X] [\partial_{\tau(2)}, d_X] \cdots  [\partial_{\tau(n)}, d_X]\big)\\
&= \sum_{i,i_1,\ldots} \sum_{z,z_1,\ldots} \sum_{\tau \in S_n} \sgn(\tau) (-1)^{n+|\xi_i|}\alpha_{iz,i_1z_1} [\partial_{\tau(1)}, d_{X,i_1i_2}]_{z_1z_2} \cdots [\partial_{\tau(n)}, d_{X,i_n i}]_{z_n z}\\
&= \sum_{i,i_1,\ldots} \sum_{\tau \in S_n} \sgn(\tau) (-1)^{n+|\xi_i|} \tr_S\big( \alpha_{ii_1} [\partial_{\tau(1)}, d_{X,i_1 i_2}] \cdots [\partial_{\tau(n)}, d_{X,i_n i}]\big)\\
&= \sum_{i,i_1,\ldots} (-1)^{n+|\xi_i|} \Ress{R/S}\begin{bmatrix} \alpha_{ii_1} \ud(d_{X,i_1i_2}) \wedge \cdots \wedge \ud(d_{X,i_ni}) \\ t_1,\ldots,t_n \end{bmatrix}\\
&= (-1)^n\Ress{R/S}\begin{bmatrix} \str_R\!\big( \alpha \cdot \ud(d_X)^{\wedge n} \big) \\ t_1,\ldots,t_n \end{bmatrix},
\end{align*}
as claimed.
\end{proof}


\section{A formula for the idempotent}\label{section:idempotentformula}

Using the de Rham contraction of a Koszul complex onto its cohomology we express the idempotent from Section \ref{section:pertfrompoint} in terms of the Atiyah class. Throughout $S$ is a $\mathbb{Q}$-algebra, $\varphi: S \lto R$ is a morphism of rings, $W \in S$ is a potential, and $(X,d_X)$ is a finite rank matrix factorisation of $W$ over $R$. We are given a quasi-regular sequence $\bold{t} = \{t_1,\ldots,t_n\}$ in $R$ such that $R/\bold{t} R$ is a finitely generated projective $S$-module and we fix a homogeneous $R$-basis $\{ \xi_i \}_{i \in I}$ for $X$. We assume that $t_j \cdot 1_X$ is null-homotopic for $1 \le j \le n$ and let $\lambda_j$ be an $R$-linear homotopy on $X$ such that $\{ \lambda_j, d_X \} = t_j \cdot 1_X$.

Let $S[\bold{t}] = S[t_1,\ldots,t_n]$ be a polynomial ring in the $t_i$ and assume that $R$ admits a flat $S$-linear connection $\nabla^0$ as a $S[\bold{t}]$-module which is standard in the sense of Definition \ref{defn:standard_connection} (by Appendix \ref{section:derhamsplit} that this is automatic if $R$ is separated and complete in the $\bold{t} R$-adic topology). Let $\nabla$ denote the induced $S$-linear map on $R \otimes_{S[\bold{t}]} \Omega_{S[\bold{t}]/S}$. In Section \ref{section:contractingkoszul} we constructed an $S$-linear homotopy $H = H_\nabla$ on $R \otimes_{S[\bold{t}]} \Omega_{S[\bold{t}]/S}$ and a $S$-linear section $\sigma: R/\bold{t} R \lto R$ of the map $\pi: R \lto R/\bold{t} R$, such that
\begin{equation}\label{eq:formula_idempotent_kaoz}
\xymatrix@C+2pc{
(R/\bold{t} R,0) \ar@<-0.8ex>[r]_(0.4)\sigma & (R \otimes_{S[\bold{t}]} \Omega_{S[\bold{t}]/S},\delta), \ar@<-0.8ex>[l]_(0.6){\pi}
} \quad -H
\end{equation}
is a deformation retract datum of $\mathbb{Z}$-graded complexes over $S$ (notice the sign on $H$). Recall that $\tau = \delta \nabla + \nabla \delta$ is invertible on $R \otimes_{S[\bold{t}]} \Omega_{S[\bold{t}]/S}$ in negative degrees and that $H = \tau^{-1} \nabla$. At this point the hypotheses of Section \ref{section:pertfrompoint} are satisfied, with (\ref{eq:formula_idempotent_kaoz}) in place of (\ref{section:pertfrompoint}) and we conclude that the augmentation $\pi: X \otimes\skos(\bold{t}) \lto X/\bold{t} X$ is a homotopy equivalence over $S$ with inverse
\[
\sigma_\infty = \sum_{m \ge 0} ( (-H) d_X )^m \sigma = \sum_{m \ge 0} (-1)^m ( H d_X )^m \sigma.
\]
In this expression for $\sigma_\infty$ the $H$ is more properly written as $1 \otimes H$, since it is defined by extending $H$ on $R \otimes_{S[\bold{t}]} \Omega_{S[\bold{t}]/S}$ across our fixed homogeneous basis for $X$ to obtain an $S$-linear map on the tensor product $X \otimes_{S[\bold{t}]} \Omega_{S[\bold{t}]/k}$, and similarly for $\sigma$. For example, the degree one $S$-linear map $\nablagr = 1 \otimes \nabla$ on $X \otimes_{S[\bold{t}]} \Omega_{S[\bold{t}]/S}$ is defined by
\[
\nablagr( r\xi_i \otimes \ud t_{i_1} \cdots \ud t_{i_p} ) = \sum_{j=1}^n (-1)^{|\xi_i|} \xi_i \otimes \partial/\partial t_j(r) \cdot \ud t_j \ud t_{i_1} \cdots \ud t_{i_p},
\]
and in this notation the $H$ appearing in $\sigma_\infty$ is $\tau^{-1} \nablagr$. With $\psi = \varepsilon \circ \sigma_\infty$ there is a diagram in the homotopy category of linear factorisations over $S$
\[
\xymatrix@+3pc{
X/\bold{t} X \ar@<-0.8ex>[r]_{\psi} & X[n] \ar@<-0.8ex>[l]_{\vartheta}
}
\]
with $\psi \circ \vartheta = 1$ and $e = \vartheta \circ \psi$ an idempotent. Moreover by Theorem \ref{theorem:first_attempt_e},
\begin{equation}\label{eq:first_e_form}
e = \lambda_1 \cdots \lambda_n \varepsilon (H d_X)^n \sigma.
\end{equation}
We proceed to rewrite $e$ in terms of the Atiyah class $\At_{S[\bold{t}]/S}(X)$. First we need to insert $H = \tau^{-1} \nablagr$ and get rid of the $\tau's$. Observe that in the terminology of Section \ref{section:residues_on_objects}, $\nablagr$ is the $\mathbb{Z}/2$-graded flat $S$-linear connection on $X$ induced by $\nabla^0$. Recall also that the commutator $\{ d_X, \nablagr \}$ is an $S[\bold{t}]$-linear map on $X \otimes_{S[\bold{t}]} \Omega_{S[\bold{t}]/k}$. Let us take $(\ref{eq:first_e_form})$ and using $\nablagr^2 = 0$ and $\nablagr \sigma = 0$ rewrite $e$ as
\begin{equation}
\begin{split}
e &= \lambda_1 \cdots \lambda_n \varepsilon (\tau^{-1} \nablagr d_X) \cdots (\tau^{-1} \nablagr d_X)\sigma\\
&= \lambda_1 \cdots \lambda_n \varepsilon \tau^{-1} \{d_X, \nablagr\} \cdots \tau^{-1} \{d_X, \nablagr\}\sigma
\end{split}
\end{equation}
Each of these factors of $\tau^{-1}$ multiplies by $1/p$ where $p$ is the weight of the differential form on which the $\tau^{-1}$ is evaluated; since each $\{ d_X, \nablagr \}$ increases the weight by one, it follows that
\begin{equation}\label{eq:almost_there_e}
e = \frac{1}{n!} \lambda_1 \cdots \lambda_n \varepsilon \{d_X, \nablagr\}^n \sigma.
\end{equation}
Moreover each of the maps involved in $e$ are $S[\bold{t}]$-linear, hence pass to $S$-linear maps on $X/\bold{t} X$, and with this understood we can drop the $\sigma$ from (\ref{eq:almost_there_e}). Finally, we know from Lemma \ref{lemma:atiyah_vs_commutator} that
\[
\At_{S[\bold{t}]/S}(X)^n = (-1)^{\binom{n+1}{2}} \varepsilon \{ d_X, \nablagr \}^n,
\]
which completes the proof of the main theorem:

\begin{theorem}\label{theorem:main} There is a diagram in the homotopy category of linear factorisations of $W$ over $S$
\[
\xymatrix@+3pc{
X/\bold{t} X \ar@<-0.8ex>[r]_{\psi} & \varphi_*(X)[n] \ar@<-0.8ex>[l]_{\vartheta}
}
\]
with $\psi \circ \vartheta = 1$ and the idempotent $e = \vartheta \circ \psi$ given by the formula
\[
e = \frac{1}{n!} (-1)^{\binom{n+1}{2}} \lambda_1 \cdots \lambda_n \At_{S[\bold{t}]/S}(X)^n.
\]
\end{theorem}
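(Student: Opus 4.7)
The plan is to distill the extended computation preceding the statement. First I would apply Theorem~\ref{theorem:first_attempt_e} to the deformation retract datum (\ref{eq:formula_idempotent_kaoz}) supplied by the de Rham contraction, with homotopy $h = -H = -\tau^{-1}\nablagr$; the required identities $H^2 = 0$, $H\sigma = 0$, $\pi H = 0$ were verified in Section~\ref{section:contractingkoszul}. This immediately gives
\[
e = (-1)^n \lambda_1\cdots\lambda_n\,\varepsilon\,((-H)d_X)^n\sigma = \lambda_1\cdots\lambda_n\,\varepsilon\,(Hd_X)^n\sigma,
\]
with the two factors of $(-1)^n$ cancelling.

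Next I would rewrite the nested product $(\tau^{-1}\nablagr d_X)^n\sigma$ in terms of the anticommutator $\{d_X,\nablagr\}$. A direct check using $\nablagr^2 = 0$ shows that $\nablagr$ commutes with both $\tau = \delta\nablagr + \nablagr\delta$ and with the $S[\bold{t}]$-linear operator $\{d_X,\nablagr\}$; combined with $\nablagr\sigma = 0$, an induction stripping off factors from the right lets us replace every $\nablagr d_X$ by $\{d_X,\nablagr\}$, yielding $(Hd_X)^n\sigma = (\tau^{-1}\{d_X,\nablagr\})^n\sigma$.

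The delicate step is then collapsing the $n$ factors of $\tau^{-1}$ to the combinatorial scalar $1/n!$. By Lemma~\ref{lemma:usualderham}, $\tau$ acts on weight $p$ as $p + \delta\nabla^0$, and standardness of $\nabla^0$ guarantees that $\delta\nabla^0$ takes values in $\bold{t}R$. Expanding $(p+\delta\nabla^0)^{-1} = \tfrac{1}{p}\bigl(1 + \tfrac{1}{p}\delta\nabla^0\bigr)^{-1}$ and using that $\{d_X,\nablagr\}$ is $S[\bold{t}]$-linear (hence commutes with multiplication by elements of $\bold{t}$), the correction terms all end up in $\bold{t}R\cdot X$ and are annihilated once the composite is viewed as an $S$-linear endomorphism of $X/\bold{t}X$. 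Only the main scalar $1/p$ survives at each of the $n$ levels (for $p = 1,\ldots,n$), giving
\[
e = \tfrac{1}{n!}\lambda_1\cdots\lambda_n\,\varepsilon\,\{d_X,\nablagr\}^n\sigma.
\]
Since every factor here is $S[\bold{t}]$-linear, the dependence on $\sigma$ disappears on passing to $X/\bold{t}X$.

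Finally, Lemma~\ref{lemma:atiyah_vs_commutator} identifies $\varepsilon\{d_X,\nablagr\}^n$ with $(-1)^{\binom{n+1}{2}}\At_{S[\bold{t}]/S}(X)^n$, producing the advertised formula; the equality $\psi\circ\vartheta = 1$ is inherited from Theorem~\ref{theorem:first_attempt_e}. The main obstacle is the reduction $\tau^{-1}\rightsquigarrow 1/p$, which is precisely where the $\mathbb{Q}$-algebra hypothesis on $S$ enters (the integers $1,\ldots,n$ must be invertible) and which relies on standardness of $\nabla^0$ to kill the $\delta\nabla^0$-correction terms modulo $\bold{t}R$; everything else amounts to bookkeeping of signs and weights.
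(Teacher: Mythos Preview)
Your proposal is correct and follows the paper's own argument step for step: apply Theorem~\ref{theorem:first_attempt_e} to the de Rham contraction, use $\nablagr^2=0$ and $\nablagr\sigma=0$ to replace each $\nablagr d_X$ by $\{d_X,\nablagr\}$, collapse the $\tau^{-1}$ factors to $1/n!$, and finish with Lemma~\ref{lemma:atiyah_vs_commutator}. If anything your handling of the reduction $\tau^{-1}\rightsquigarrow 1/p$ is more careful than the paper's---the paper simply asserts it, whereas you correctly observe that the $\delta\nabla^0$-corrections lie in $\bold{t}R$ and hence vanish once $e$ is viewed as an endomorphism of $X/\bold{t}X$.
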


\begin{remark} If $R/\bold{t} R$ is free of finite rank over $S$ then $e$ is an idempotent in $\hmf(S,W)$, and if $R$ is free over $S$ then the pushforward of $X$ is an object of $\HMF(S,W)$.
\end{remark}

\begin{remark} Sign factors of the form $(-1)^{\binom{n}{2}}$ are essentially unavoidable when one starts making abstract dualities explicit in terms of residues; for a good discussion of this point we refer the reader to \cite[Appendix A]{Conrad00}.
\end{remark}

Next we give a presentation of $e$ better suited to computing in examples. For $1 \le j \le n$ there is an $S$-linear map $\partial_j = \partial/\partial t_j$ defined on $X$ by $\partial_j(r \xi_i) = \partial_j(r) \xi_i$. Both $\lambda_i$ and $[\partial_j, d_X]$ are $S[\bold{t}]$-linear maps of degree one on $X$, and therefore induce $S$-linear maps on $X/\bold{t} X$.

\begin{corollary}\label{cor:simplified_e} As an $S$-linear endomorphism of $X/\bold{t} X$,
\[
e = \frac{1}{n!} (-1)^{\binom{n}{2}} \sum_{\tau \in S_n} \sgn(\tau) \cdot \lambda_1 \cdots \lambda_n \cdot [\partial_{\tau(1)}, d_X] \cdots [\partial_{\tau(n)}, d_X].
\]
\end{corollary}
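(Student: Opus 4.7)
The plan is to simply substitute the expansion of $\At_{S[\bold{t}]/S}(X)^n$ that was already worked out in equation (\ref{eq:atiyahnexpanded}) into the formula for $e$ provided by Theorem \ref{theorem:main}, and then collect the sign factors. No new geometric input is required; this is a bookkeeping corollary.

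Concretely, the main theorem gives
\[
e = \frac{1}{n!}(-1)^{\binom{n+1}{2}} \lambda_1 \cdots \lambda_n \At_{S[\bold{t}]/S}(X)^n,
\]
and the expansion (\ref{eq:atiyahnexpanded}) identifies $\At_{S[\bold{t}]/S}(X)^n$ as an $S[\bold{t}]$-linear operator on $X$ with the sum
\[
(-1)^n \sum_{\tau \in S_n} \sgn(\tau) [\partial_{\tau(1)}, d_X] \cdots [\partial_{\tau(n)}, d_X].
\]
Since each $\lambda_j$ and each commutator $[\partial_j, d_X]$ is $S[\bold{t}]$-linear, each factor passes to a well-defined $S$-linear map on the quotient $X/\bold{t} X$, so the resulting composite makes sense as an $S$-linear endomorphism there, which is exactly how Theorem \ref{theorem:main} interprets $e$.

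The only remaining step is the sign calculation. Multiplying the overall prefactor $(-1)^{\binom{n+1}{2}}$ by the $(-1)^n$ coming from the Atiyah-class expansion produces $(-1)^{\binom{n+1}{2}+n}$, and I would then use the identity $\binom{n+1}{2} = \binom{n}{2} + n$ to rewrite this exponent as $\binom{n}{2} + 2n$, which is congruent to $\binom{n}{2}$ modulo $2$. The prefactor thus collapses to $(-1)^{\binom{n}{2}}/n!$, yielding the stated formula.

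There is no genuine obstacle here; the only place where a misstep is likely is the sign arithmetic combining $(-1)^{\binom{n+1}{2}}$ from Theorem \ref{theorem:main} with the $(-1)^n$ from the derivation of (\ref{eq:atiyahnexpanded}), so I would write out the parity reduction explicitly to be safe.
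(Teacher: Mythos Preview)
Your proposal is correct and matches the paper's own proof, which simply says the formula is immediate from (\ref{eq:atiyahnexpanded}) and the theorem. Your explicit sign reduction $(-1)^{\binom{n+1}{2}+n}=(-1)^{\binom{n}{2}}$ is exactly the bookkeeping that the paper leaves implicit.
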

\begin{proof}
This is immediate from (\ref{eq:atiyahnexpanded}) and the theorem.
\end{proof}

\begin{remark} Suppose that $R$ is a free $S[\bold{t}]$-module of finite rank on the basis $e_1,\ldots,e_q$. Then $X$ is a finite rank matrix factorisation over $S[\bold{t}]$ with basis $\{ e_j \cdot \xi_i \}_{j,i}$, and to compute the Atiyah class of $X$ over $S[\bold{t}]/S$ we use the flat connection $\nabla^0(a \cdot e_j \cdot \xi_i) = e_j \cdot \xi_i \otimes \ud a$ extending $\ud: S[\bold{t}] \lto \Omega^1_{S[\bold{t}]/S}$. Then by Remark \ref{remark:finite_rank_is_derivative} the Atiyah class is, as a matrix of $1$-forms, the entrywise differential
\[
\At_{S[\bold{t}]/S}(X) = [d_X, \nabla^0] = -\ud_{S[\bold{t}]/S}(d_X)
\]
of the matrix of $d_X$ over $S[\bold{t}]$. 
\end{remark}

In simple cases one can split the idempotent ``by hand'' to find a finite rank matrix factorisation homotopy equivalent to the pushforward (what we call the \emph{finite model}). A nontrivial example is Kn\"orrer periodicity, studied in Section \ref{section:knorrer}. In more complicated examples the computer can do the splitting, and with Nils Carqueville this process has been implemented in Singular \cite{carqmurf}. We do not know how to give the differential on the finite model directly, but see Remark \ref{remark:avoiding_idempotents}. 

Our main applications of the theorem involve isolated singularities, but our techniques apply more generally to non-isolated singularities.

\begin{remark} Let $W \in \mathbb{C}[\bold{x}] = \mathbb{C}[x_1,\ldots,x_n]$ be a potential whose singular locus contains the origin. Suppose that the critical locus of $W$ is complete intersection of codimension $c$ at the origin, so $I = (\partial_{x_1} W, \ldots, \partial_{x_n} W) \mathbb{C}\llbracket \bold{x}\rrbracket$ can be generated by some regular sequence $\bold{t} = \{ t_1, \ldots, t_c \}$, and let
\[
\rho: \mathbb{C}\llbracket \bold{y}\rrbracket = \mathbb{C}\llbracket y_1,\ldots,y_{n-c}\rrbracket \lto \mathbb{C}\llbracket \bold{x}\rrbracket/I
\]
be a Noether normalisation. Then since $\mathbb{C}\llbracket \bold{x}\rrbracket/I$ is complete intersection and hence Cohen-Macaulay, $\mathbb{C}\llbracket \bold{x}\rrbracket/I$ is a free $\mathbb{C}\llbracket \bold{y}\rrbracket$-module of finite rank via $\rho$. We can factor this map via a continuous morphism of $\mathbb{C}$-algebras $\varphi$,
\[
\xymatrix{
& \mathbb{C}\llbracket \bold{x}\rrbracket \ar[dr]^{\can}\\
\mathbb{C}\llbracket \bold{y}\rrbracket \ar[ur]^{\varphi}\ar[rr]_{\rho} & & \mathbb{C}\llbracket \bold{x}\rrbracket/I.
}
\]
Then the morphism $\varphi$ satisfies the hypotheses at the beginning of this section, and $\mathbb{C}\llbracket \bold{x}\rrbracket$ admits a standard $\mathbb{C}\llbracket \bold{y}\rrbracket$-linear flat connection as a $\mathbb{C}\llbracket \bold{y}\rrbracket[\bold{t}]$-module by virtue of its completeness. To have a concrete example, consider $W = x^2 - y^2 \in \mathbb{C}[x,y,z]$. In this case we take $\bold{t} = \{ x,y \}$ and the Noether normalisation is the canonical map $\mathbb{C}\llbracket z\rrbracket \lto \mathbb{C}\llbracket x,y,z\rrbracket \lto \mathbb{C}\llbracket x,y,z\rrbracket/(x,y) = \mathbb{C}\llbracket z\rrbracket$.

The connection on $\mathbb{C}\llbracket x,y,z\rrbracket$ produced by Appendix \ref{section:derhamsplit} simply differentiates in the normal directions to the singular locus: 
\begin{gather*}
\nabla^0: \mathbb{C}\llbracket x,y,z\rrbracket \lto \mathbb{C}\llbracket x,y,z\rrbracket \otimes_{\mathbb{C}\llbracket z\rrbracket[x,y]} \Omega^1_{\mathbb{C}\llbracket z\rrbracket[x,y]/\mathbb{C}\llbracket z\rrbracket},\\
\nabla^0(f) = \frac{\partial f}{\partial x} \ud x + \frac{\partial f}{\partial y} \ud y.
\end{gather*}
\end{remark}

\section{The Chern character}\label{section:chern_char_over}

The Chern character of a matrix factorisation is the trace of the identity endomorphism, for some suitable notion of trace \cite{polishchuk,dyckmurf}. The pushforward is the splitting of an idempotent which involves the Atiyah class, and we have seen in Section \ref{section:residues_on_objects} that tracing expressions involving the Atiyah class naturally produces residues; we therefore expect a residue formula for the Chern character of the pushforward, and in this section we give such formulas.

We begin with an alternative approach to the Hirzebruch-Riemann-Roch theorem of Polishchuk and Vaintrob \cite{polishchuk}. Their approach is via the derived Morita theory of dg categories and Hochschild homology, which is conceptually clear but requires some sophisticated machinery. In contrast, our proof is very elementary: besides the main theorem above, we need only basic properties of residues and the nondegeneracy of the residue pairing of local duality.

Let $R$ be the power series ring $k\llbracket x_1,\ldots,x_n\rrbracket$ where $k$ is a field of characteristic zero, and let $\mf{m}$ denote the maximal ideal. Let $W \in \mf{m}^2$ be a polynomial defining an isolated singularity, by which we mean that the Jacobi algebra $J_W = R/\bold{t} R$ is finite-dimensional over $k$, where $\bold{t}$ denotes the regular sequence $\{\partial_{x_1} W, \ldots, \partial_{x_n} W\}$. The partial derivatives $\partial_{x_i} W$ act null-homotopically on every matrix factorisation of $W$ (see Remark \ref{remark:partial_gives_homotopy}) and using this we define the Chern character:

\begin{definition} Let $(X,d_X)$ be a finite rank matrix factorisation of $W$ over $R$. For each $1 \le i \le n$ let $\lambda_i$ be an $R$-linear homotopy on $X$ with $\lambda_i d_X + d_X \lambda_i = \partial_{x_i} W \cdot 1_X$. The \emph{Chern character} of $X$ is defined to be the equivalence class in $J_W$ of the supertrace
\[
\ch(X) = (-1)^{\binom{n}{2}} \str_R( \lambda_1 \cdots \lambda_n ).
\]
By Lemma \ref{lemma:indept_homotopy_chern} this does not depend on the choice of homotopies, and if we fix a homogeneous basis for $X$ and take $\lambda_i = \partial_{x_i}(d_X)$ then we recover the definitions in \cite{polishchuk,dyckmurf}.
\end{definition}

Let $(X,d_X), (Y,d_Y)$ be finite rank matrix factorisations of $W$ over $R$ with chosen homogeneous bases. We apply the pushforward construction to the matrix factorisation $\shom(X,Y)$ of zero over $R$ and the ring morphism $\varphi: k \lto R$. Observe that each $\partial_{i} W$ acts null-homotopically on $\shom(X,Y)$, and a null-homotopy for this endomorphism is given by the map
\[
\partial_{x_i}(d_Y) \circ -: \shom(X,Y) \lto \shom(X,Y)
\]
which post-composes with $\partial_{i}(d_Y)$. Since $R$ is complete the main theorem applies, and tells us that there is a diagram in the homotopy category of matrix factorisations of zero over $k$
\begin{equation}\label{eq:homassummand}
\xymatrix@+3pc{
\shom(X,Y) \otimes_R J_W \ar@<-0.8ex>[r]_(0.6){\psi} & \shom(X,Y)[n] \ar@<-0.8ex>[l]_(0.4){\vartheta}
}
\end{equation}
with $\psi \circ \vartheta = 1$ and the idempotent $e = \vartheta \circ \psi$ given by
\[
e = \frac{1}{n!} (-1)^{\binom{n+1}{2}} \partial_{x_1}(d_Y) \cdots \partial_{x_n}(d_Y) \At_{k[\bold{t}]/k}(\shom(X,Y))^n.
\]

\begin{remark}\label{remark:avoiding_idempotents} The category of finite-dimensional $\mathbb{Z}/2$-graded vector spaces has split idempotents, and it is natural to wonder if one can directly construct the differential of a splitting $Z$ of $e$ in terms of invariants of $X$ and $Y$. We don't know such a construction, but here is an observation: because $W$ is nonzero the differentials on $X,Y$ must be \emph{square} matrices, and the constructions that naturally come to mind will produce a differential $d_Z$ for the splitting which is also a square matrix. But then $\dim_k Z^0 = \dim_k Z^1$ implies that $\dim_k H^0(Z) = \dim_k H^1(Z)$. Since $Z$ is quasi-isomorphic to $\Hom(X,Y)$ we infer that
\[
\dim_k \uHom(X,Y) = \dim_k H^0(Z) = \dim_k H^1(Z) = \dim_k \uHom(X, Y[1])
\]
and it is certainly possible to find $X,Y$ for which this is false. 

What construction can we apply, beginning with square matrices, to produce nonsquare matrix factorisations of zero? The most obvious answer is that we can split idempotents; after all, the matrix factorisations of zero that we miss using square matrices are $k \lto 0 \lto k$ and $0 \lto k \lto 0$, the direct sum of which is square.
\end{remark}

\subsection{The Cardy condition}\label{section:hrr}

Keeping the above notation we use $e$ to give an alternative proof of the Cardy condition for matrix factorisations first proved by Polishchuk and Vaintrob \cite{polishchuk}. Given a $\mathbb{Z}/2$-graded complex $Z$ of $k$-vector spaces we write $H(Z) = H^0(Z) \oplus H^1(Z)$ for the total cohomology.

\begin{theorem}\label{theorem:cardycond} Let $\alpha: X \lto X$ and $\beta: Y \lto Y$ be morphisms. Then the induced map
\[
\shom(\alpha, \beta): \shom(X,Y) \lto \shom(X,Y)
\]
induces a degree zero map of finite-dimensional $\mathbb{Z}/2$-graded vector spaces
\[
H_{\alpha, \beta} = H(\shom(\alpha, \beta)): H(\shom(X,Y)) \lto H(\shom(X,Y))
\]
and (writing $\ud \bold{x} = \ud x_1 \wedge \cdots \wedge \ud x_n$)
\[
\str_k(H_{\alpha, \beta}) = (-1)^{\binom{n}{2}} \Res\begin{bmatrix}\str_R\big( \alpha \partial_{x_1}(d_X) \cdots \partial_{x_n}(d_X) \big) \cdot \str_R\big(\beta \partial_{x_1}(d_Y) \cdots \partial_{x_n}(d_Y)\big) \cdot \ud \bold{x} \\ \partial_{x_1} W, \ldots, \partial_{x_n} W \end{bmatrix}.
\]
\end{theorem}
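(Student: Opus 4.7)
Set $M := \shom(X,Y)$, $Z := \shom(\alpha,\beta)$, $\bold{t} := \{\partial_{x_1}W,\ldots,\partial_{x_n}W\}$, and let $\lambda_j := \partial_{x_j}(d_Y)\circ(-)$ be the $R$-linear null-homotopy for $\partial_{x_j}W$ on $M$ singled out in the preamble to (\ref{eq:homassummand}). The strategy is to compute $\str_k(H_{\alpha,\beta})$ by tracing through the idempotent $e$ of the main theorem, then apply the residue formula of Proposition \ref{prop:residues_on_objects}, and finally evaluate the resulting $R$-supertrace using the tensor-product decomposition $M \cong X^\vee \otimes Y$.

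First, the retract diagram (\ref{eq:homassummand}) exhibits $M[n]$ as a summand of the finite rank matrix factorisation $M/\bold{t}M = M\otimes_R J_W$ of zero over $k$. Since $J_W$ is finite-dimensional by the isolated singularity hypothesis, $M/\bold{t}M$ is a finite-dimensional $\mathbb{Z}/2$-graded complex over $k$. The homotopy category of such complexes is idempotent complete and equivalent via cohomology to the category of finite-dimensional $\mathbb{Z}/2$-graded vector spaces, so splitting $e$ yields $\varphi_*(M)\simeq H(M)$ and in particular establishes the finite-dimensionality assertion. Lemma \ref{lemma:nate} applied to $Z: M \to M$ shows that $\psi$ and $\vartheta$ intertwine $Z$ on $M/\bold{t}M$ with $H_{\alpha,\beta}[n]$ on $H(M)[n]$. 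Combining $\psi\vartheta = 1$, cyclicity of the $k$-supertrace on the finite-dimensional complex $M/\bold{t}M$, and the identity $\str_k(f[n]) = (-1)^n\str_k(f)$ yields
\[
\str_k(H_{\alpha,\beta}) = (-1)^n\,\str_k(Z\circ e).
\]

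Second, substituting the formula for $e$ from Theorem \ref{theorem:main} and invoking Proposition \ref{prop:residues_on_objects} with the degree-$n$ $R$-linear map $Z\lambda_1\cdots\lambda_n$ (which induces a morphism on $M/\bold{t}M$ because $Z$ is a morphism of factorisations and each $\lambda_j$ becomes a degree-one morphism on $M/\bold{t}M$), one obtains, after collecting signs,
\[
\str_k(H_{\alpha,\beta}) = \frac{(-1)^{\binom{n+1}{2}}}{n!}\Ress{R/k}\begin{bmatrix} \str_R\bigl(Z\,\lambda_1\cdots\lambda_n\cdot\ud(d_M)^{\wedge n}\bigr) \\ \partial_{x_1}W,\ldots,\partial_{x_n}W \end{bmatrix}.
\]

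The main obstacle is the last step: identifying this integrand with the product appearing in the theorem. I would use the canonical isomorphism $M \cong X^\vee\otimes Y$, under which $d_M = d_{X^\vee}\otimes 1 + 1\otimes d_Y$, $Z = \alpha^\vee\otimes\beta$, and $\lambda_i = 1\otimes\partial_{x_i}(d_Y)$, together with the multiplicativity $\str_{X^\vee\otimes Y} = \str_{X^\vee}\cdot\str_Y$ of the supertrace. Expanding $\ud(d_M)^{\wedge n}$ by the graded binomial theorem (the two summands anti-commute as degree-one operators with values in $1$-forms) organises the terms according to how many differentials land on each tensor factor. Working inside the residue, the factors $\partial_{x_i}(d_X)$ and $\partial_{x_i}(d_Y)$ may be reordered freely: the relations $\{d_X,\partial_{x_j}(d_X)\} = \partial_{x_j}W\cdot 1_X$ together with the $\mathbb{Z}/2$-cyclicity of $\str_R$ show that each transposition of adjacent factors alters the supertrace only by a boundary and by an element of $\bold{t}R$, neither of which contributes to the residue. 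Collapsing the $n!$ orderings to a single common representative cancels the $1/n!$ prefactor; a sign $(-1)^n$ emerging from the Kunneth-style expansion combines with the prefactor via $(-1)^{\binom{n+1}{2}+n} = (-1)^{\binom{n}{2}}$ to produce the sign stated in the theorem.
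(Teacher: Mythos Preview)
Your overall strategy matches the paper's proof exactly: reduce $\str_k(H_{\alpha,\beta})$ to $(-1)^n\str_k(Z\circ e)$ via the retract diagram and Lemma~\ref{lemma:nate}, then apply Proposition~\ref{prop:residues_on_objects} to convert this into a residue, and finally evaluate the integrand using $M\cong X^\vee\otimes Y$. The first two steps are fine.

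The gap is in the last step. After writing $\partial_j(d_M)=\partial_j(d_{X^\vee})\otimes 1 + 1\otimes\partial_j(d_Y)$ and expanding, you obtain for each permutation $\tau$ a sum of $2^n$ terms, indexed by which of the $n$ factors land on $X^\vee$ and which on $Y$. Since the $\lambda_i=1\otimes\partial_i(d_Y)$ already contribute $n$ factors on the $Y$-side, the $X^\vee$-side carries only a proper subset $\{i_1,\ldots,i_p\}\subsetneq\{1,\ldots,n\}$ of derivatives in every cross term. You must show that
\[
\str_R\!\big(\alpha^\vee\,\partial_{i_1}(d_{X^\vee})\cdots\partial_{i_p}(d_{X^\vee})\big)=0 \quad\text{in }J_W\text{ whenever }p<n,
\]
which is Lemma~\ref{lemma:nonfullstrvanishes}. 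Your reordering argument (essentially Lemma~\ref{eq:lemmaordchern}) only shows that, once restricted to the pure $p=n$ terms, all $n!$ permutations contribute equally; it does not kill the cross terms. In particular, trying to argue via a repeated index on the $Y$-side fails: one computes $2(\partial_j d_Y)^2=\partial_j^2 W-\{d_Y,\partial_j^2 d_Y\}$, and $\partial_j^2 W$ need not lie in $(\partial_1 W,\ldots,\partial_n W)$, so the repetition does not vanish modulo $\bold{t}R$ and boundaries. The vanishing really comes from the $X^\vee$-factor having too few null-homotopies, and the mechanism is the trick of Lemma~\ref{lemma:nonfullstrvanishes}: insert an extra $\partial_1 W=\{d_X,\lambda_1\}$ and cycle the $d_X$ around the trace to see the residue is zero. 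Once you insert this step, the remainder of your sign bookkeeping (Lemma~\ref{eq:lemmaordchern} plus $(-1)^{\binom{n+1}{2}+n}=(-1)^{\binom{n}{2}}$) goes through.
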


Taking $\alpha = 1_X$ and $\beta = 1_Y$ yields the Hirzebruch-Riemann-Roch theorem.

\begin{corollary} We have
\[
\chi( \Hom_R(X,Y) ) = (-1)^{\binom{n}{2}} \Res\begin{bmatrix} \ch(X) \cdot \ch(Y) \cdot \ud \bold{x} \\ \partial_{x_1} W, \ldots, \partial_{x_n} W \end{bmatrix}.
\]
\end{corollary}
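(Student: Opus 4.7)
The plan is to obtain this as an immediate specialisation of Theorem \ref{theorem:cardycond} with $\alpha = 1_X$ and $\beta = 1_Y$. With this choice the induced endomorphism $\shom(1_X, 1_Y)$ is just the identity on $\shom(X,Y)$, so $H_{1_X,1_Y}$ is the identity on the finite-dimensional $\mathbb{Z}/2$-graded vector space $H(\shom(X,Y))$, and hence
\[
\str_k(H_{1_X,1_Y}) = \dim_k H^0 \Hom_R(X,Y) - \dim_k H^1 \Hom_R(X,Y) = \chi(\Hom_R(X,Y)).
\]

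For the right-hand side, I would use the canonical homotopies $\lambda_i = \partial_{x_i}(d_X)$ and $\mu_i = \partial_{x_i}(d_Y)$ supplied by Remark \ref{remark:partial_gives_homotopy}, for which the definition of the Chern character rearranges to
\[
\str_R\!\big(\partial_{x_1}(d_X) \cdots \partial_{x_n}(d_X)\big) = (-1)^{\binom{n}{2}} \ch(X),
\]
and analogously for $Y$. Inserting these two identities into the residue in Theorem \ref{theorem:cardycond} produces the integrand $(-1)^{2\binom{n}{2}} \ch(X) \ch(Y) \,\ud \bold{x} = \ch(X) \ch(Y) \,\ud \bold{x}$, and combined with the overall prefactor $(-1)^{\binom{n}{2}}$ this is exactly the claimed formula.

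There is essentially no obstacle here: all the real work is done in Theorem \ref{theorem:cardycond}, and the only thing to verify is the sign bookkeeping, which collapses because the two $(-1)^{\binom{n}{2}}$ from the definition of $\ch(X)$ and $\ch(Y)$ multiply to $+1$. One small point worth flagging is that the product $\ch(X) \ch(Y)$ a priori lives in $J_W$ rather than in $R$, but the residue symbol depends only on the class modulo $(\partial_{x_1} W, \ldots, \partial_{x_n} W)$, so the expression on the right makes sense unambiguously.
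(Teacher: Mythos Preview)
Your proof is correct and follows exactly the paper's approach: the paper simply states that taking $\alpha = 1_X$ and $\beta = 1_Y$ in Theorem \ref{theorem:cardycond} yields the corollary, and you have unpacked precisely this specialisation, including the sign bookkeeping.
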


\begin{proof}[Proof of Theorem \ref{theorem:cardycond}]
Set $\mathscr{H} = \shom(X,Y)$ and consider the commutative diagram
\[
\xymatrix@C+2pc{
\mathscr{H} \otimes J_W \ar[r]^{\shom(\alpha, \beta)} & \mathscr{H} \otimes J_W \ar[d]^{\psi} \\
\mathscr{H}[n] \ar[r]^{\shom(\alpha, \beta)[n]} \ar[u]^{\vartheta} & \mathscr{H}[n]\\
H(\mathscr{H})[n] \ar[u]_{\cong}^f \ar[r]_{H_{\alpha, \beta}[n]} & H(\mathscr{H})[n] \ar[u]^{\cong}_{f},
}
\]
where commutativity of the top square is due to Lemma \ref{lemma:nate}. We deduce that
\begin{align*}
\str_k( H_{\alpha, \beta} ) &= (-1)^n \str_k( H_{\alpha,\beta}[n] )\\
&= (-1)^n \str_k( f^{-1} \psi \shom(\alpha, \beta) \vartheta f)\\
&= (-1)^n \str_k( \shom(\alpha, \beta) \vartheta \psi )\\
&= (-1)^n \str_k( \shom(\alpha, \beta) e ).
\end{align*}
Using Proposition \ref{prop:residues_on_objects} and the formula for $e$,
\begin{align*}
\str_k( H_{\alpha, \beta} ) &= \frac{1}{n!} (-1)^{\binom{n}{2}} \str_k( \shom(\alpha, \beta)  \partial_{x_1}(d_Y) \cdots \partial_{x_n}(d_Y) \At_{k[\bold{t}]/k}(\mathscr{H})^n )\\
&= \frac{1}{n!} (-1)^{\binom{n+1}{2}} \Ress{R/k}\begin{bmatrix} \str_R\big( \shom(\alpha, \beta)  \partial_{x_1}(d_Y) \cdots \partial_{x_n}(d_Y) \ud( d_{\mathscr{H}} )^{\wedge n} \big) \;/\; \bold{t} \end{bmatrix},
\end{align*}
where $\Ress{R/k}[ - / \bold{t} ]$ denotes a residue with denominator $\partial_1 W, \ldots, \partial_n W$. One subtlety is that in this residue $\ud(-)$ denotes the K\"ahler differential $\ud: R \lto \Omega^1_{R/k}$ on power series rather than polynomials. But since this all takes place inside the residue, which is annihilated by all monomials of sufficiently high degree, this agrees with the K\"ahler differential on $k[x_1,\ldots,x_n]$. If we now identify $\mathscr{H}$ with the tensor product $X^{\lor} \otimes Y$ and $\shom(\alpha, \beta)$ with $\alpha^{\lor} \otimes \beta$, then we find that
\begin{equation}\label{eq:chieuler}
\str_k( H_{\alpha, \beta} ) = \frac{1}{n!} (-1)^{\binom{n+1}{2}} \Ress{R/k}\begin{bmatrix} \str_R\big( \alpha^{\lor} \otimes \beta \partial_{x_1}(d_Y) \cdots \partial_{x_n}(d_Y) \circ \ud( d_{X^{\lor}} + d_Y )^{\wedge n} \big) \;/\; \bold{t} \end{bmatrix}.
\end{equation}
We expand $\ud( d_{X^{\lor}} + d_Y)^{\wedge n}$ as a sum over permutations $\tau \in S_n$ of signed terms (writing $\partial_i = \partial_{x_i}$)
\begin{equation}\label{eq:chieuler2}
\partial_{\tau(1)}(d_{X^{\lor}} + d_Y) \cdots \partial_{\tau(n)}(d_{X^{\lor}} + d_Y) \ud \bold{x}
\end{equation}
which itself expands as a sum of signed terms of the form
\begin{equation}\label{eq:chieulermult}
\partial_{i_1}(d_{X^{\lor}}) \cdots \partial_{i_p}(d_{X^{\lor}}) \otimes \partial_{j_1}(d_{Y}) \cdots \partial_{j_q}(d_{Y}).
\end{equation}
But it is a simple exercise using nondegeneracy of the residue pairing on $J_W$ (see Lemma \ref{lemma:nonfullstrvanishes}) to check that in $J_W$ we have
\[
\str_R( \alpha^{\lor} \partial_{i_1}(d_{X^{\lor}}) \cdots \partial_{i_p}(d_{X^{\lor}}) ) = 0
\]
unless $p = n$, from which we deduce that for each permutation $\tau$ only one of the summands coming from the expansion of (\ref{eq:chieuler2}) contributes to the residue, and so using $\str(c \otimes d) = \str(c) \cdot \str(d)$ and with a sign change coming from commuting $\partial_i(d_{X^{\lor}})$'s past $\partial_j(d_Y)$'s,
\begin{multline}\label{eq:hrr_almost_there}
\str_k( H_{\alpha, \beta} ) = \frac{1}{n!} (-1)^{\binom{n}{2}}\sum_{\tau \in S_n} \sgn(\tau) \Ress{R/k}\Big[ \str_R\big( \alpha^{\lor} \partial_{\tau(1)}(d_{X^{\lor}}) \cdots \partial_{\tau(n)}(d_{X^{\lor}}) )\\ \cdot \str_R( \beta \partial_{1}(d_Y) \cdots \partial_{n}(d_Y)) \cdot \ud \bold{x} \big) \;/\; \bold{t} \Big].
\end{multline}
It remains to observe that by Lemma \ref{eq:lemmaordchern}
\begin{align*}
\str_R\big( \alpha^{\lor} \partial_{\tau(1)}(d_{X^{\lor}}) \cdots \partial_{\tau(n)}(d_{X^{\lor}}) ) &= \str_R\big( \alpha \partial_{\tau(1)}(d_{X}) \cdots \partial_{\tau(n)}(d_{X}) )\\
&= \sgn(\tau) \str_R\big( \alpha \partial_{1}(d_{X}) \cdots \partial_{n}(d_{X}) ).
\end{align*}
Inserting this into (\ref{eq:hrr_almost_there}) completes the proof.
\end{proof}

\subsection{The general pushforward}\label{section:chern_character}

In this section we give the general formula for the Chern character of the pushforward. For this to make sense the base ring should be a ring of power series, so we take $k$ to be a field of characteristic zero and set $S = k\llbracket y_1,\ldots,y_m\rrbracket$, with $\mf{m}$ denoting the maximal ideal. Let $W$ be a polynomial in $\mf{m}^2$ with $J_W = S/(\partial_{y_1} W, \ldots, \partial_{y_m} W)$ a finite-dimensional $k$-algebra and $\varphi: S \lto R$ a ring morphism.

Let $(X,d_X)$ denote a finite rank matrix factorisation of $W$ over $R$ and $\bold{t} = \{ t_1,\ldots,t_n \}$ a quasi-regular sequence in $R$ such that the following conditions are satisfied:
\begin{itemize}
\item the morphism $t_j \cdot 1_X$ is null-homotopic over $R$ for $1 \le j \le n$,
\item the morphism $\partial_{y_i} W \cdot 1_X$ is null-homotopic over $R$ for $1 \le i \le m$,
\item $R$ admits a standard flat $S$-linear connection $R \lto R \otimes_{S[\bold{t}]} \Omega^1_{S[\bold{t}]/S}$,
\item and $R/\bold{t} R$ is free of finite rank over $S$.
\end{itemize}
Then we can apply the pushforward construction to see that the pushforward $\varphi_*(X)$ is homotopy equivalent to a finite matrix factorisation of $W$ over $S$, and

\begin{proposition}\label{prop:chern_char_psh} If for each $1 \le j \le m$ and $1\le i \le n$ we choose $R$-linear homotopies $\lambda_j$ and $\mu_i$ on $X$ such that $\lambda_j d_X + d_X \lambda_j = t_j \cdot 1_X$ and $\mu_i d_X + d_X \mu_i = \partial_{y_i} W \cdot 1_X$ then as elements of $J_W$,
\begin{equation}\label{eq:chern_char_psh_fom}
\ch(\varphi_*(X)) = \frac{1}{n!} (-1)^{\binom{n+1}{2} + \binom{m}{2}} \Ress{R/S} \Bigg[ \begin{matrix} \str_R\big( \mu_1 \cdots \mu_m \lambda_1 \cdots \lambda_n \ud_{R/S}(d_X)^{\wedge n} \big) \\ t_1, \ldots, t_n \end{matrix} \Bigg].
\end{equation}
\end{proposition}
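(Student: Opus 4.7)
The plan is to make rigorous the heuristic calculation sketched in the introduction: given the splitting $\vartheta, \psi$ of Theorem~\ref{theorem:main}, one has
\[
\ch(\varphi_*(X)[n]) = \beta(1_{\varphi_*(X)[n]}) = \beta(\psi \circ \vartheta) = \beta(\vartheta \circ \psi) = \beta(e),
\]
and the last term will be converted into a residue using Proposition~\ref{prop:residues_on_objects}. Here $\beta_Z$ is the boundary-bulk map of a finite rank matrix factorisation $Z$ of $W$ over $S$, computed as $\beta_Z(f) = (-1)^{\binom{m}{2}} \str_S(f \circ \nu_1^Z \cdots \nu_m^Z)$ for any choice of $S$-linear homotopies $\nu_i^Z$ for $\partial_{y_i} W$ on $Z$; its cyclicity $\beta_Y(\psi \vartheta) = \beta_Z(\vartheta \psi)$ for morphisms $\vartheta: Y \lto Z$, $\psi: Z \lto Y$ is precisely what justifies the second equality above.

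First I would observe that the $R$-linear homotopies $\mu_i$ on $X$ induce, by passing to the quotient, the required $S$-linear homotopies for $\partial_{y_i} W$ on the finite rank matrix factorisation $X/\bold{t} X$ over $S$. Applying the formula for $e$ from Theorem~\ref{theorem:main} and bringing the Atiyah class to the right via cyclic symmetry of the supertrace (which introduces no sign since $\lambda_1 \cdots \lambda_n \circ \At_{S[\bold{t}]/S}(X)^n$ has even $\mathbb{Z}/2$-degree) then gives
\[
\ch(\varphi_*(X)[n]) = \frac{(-1)^{\binom{n+1}{2} + \binom{m}{2}}}{n!} \str_S\big( \mu_1 \cdots \mu_m \lambda_1 \cdots \lambda_n \circ \At_{S[\bold{t}]/S}(X)^n \big).
\]
Now invoking Proposition~\ref{prop:residues_on_objects} with $\alpha = \mu_1 \cdots \mu_m \lambda_1 \cdots \lambda_n$ (an $R$-linear map on $X$) converts this supertrace into a residue and introduces a single extra sign $(-1)^n$, which is cancelled by the factor $(-1)^n$ coming from $\ch(\varphi_*(X)) = (-1)^n \ch(\varphi_*(X)[n])$ under the $n$-fold suspension. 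The result is exactly the formula (\ref{eq:chern_char_psh_fom}).

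The main obstacle is careful sign accounting: verifying that the cyclic permutation of the odd $R$-linear operators $\mu_i$ and $\lambda_j$ past the Atiyah class produces no Koszul signs beyond those already recorded, and that Proposition~\ref{prop:residues_on_objects} extends to an $R$-linear map $\alpha$ of $\mathbb{Z}/2$-degree $m+n$ rather than $n$. Both sides of the claimed formula vanish by parity when $m$ is odd, so only the case $m$ even is substantive, and the matrix computation in the proof of Proposition~\ref{prop:residues_on_objects} then carries over verbatim. A secondary technical point is the behaviour of $\ch$ under suspension, which requires tracking the effect of the grading flip on the supertrace and of the sign change in the differential on the choice of homotopies.
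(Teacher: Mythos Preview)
Your approach is essentially the same as the paper's: compute $\ch$ via the boundary-bulk map $\beta$, use cyclicity to replace $\psi\vartheta$ by $\vartheta\psi = e$, insert the formula for $e$ from Theorem~\ref{theorem:main}, and convert the resulting supertrace to a residue via Proposition~\ref{prop:residues_on_objects}. Your sign bookkeeping and your reduction to the case $m$ even are also handled the same way.

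One precision point where the paper is more careful: the map $\beta_Y$ and its cyclicity property $\beta_Y(\psi\vartheta)=\beta_Z(\vartheta\psi)$ are only established for \emph{finite rank} $Y,Z$, so writing $\beta(1_{\varphi_*(X)[n]})$ directly is not quite licit, since $\varphi_*(X)$ is typically of infinite rank. The paper therefore first splits $e$ inside $\hmf(S,W)$ to produce a finite rank $Z$ with $f:Z\to X/\bold{t}X$, $g:X/\bold{t}X\to Z$, $gf=1_Z$, $fg=e$, so that $Z[n]\cong\varphi_*(X)$; cyclicity is then applied to the pair $f,g$ between the finite rank objects $Z$ and $X/\bold{t}X$. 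This is a routine fix for your argument, not a structural issue. Your remark about needing $\alpha$ of $\mathbb{Z}/2$-degree $m+n$ rather than $n$ in Proposition~\ref{prop:residues_on_objects} is well taken and resolved exactly as you say: once $m$ is even the degrees agree modulo $2$, and the matrix computation in that proof only uses the $\mathbb{Z}/2$-degree.
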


Before giving the proof, let us consider the typical example.

\begin{example} Suppose our ring morphism is of the form
\[
\varphi: k\llbracket y_1,\ldots,y_m\rrbracket \lto R = k\llbracket x_1,\ldots,x_n,y_1,\ldots,y_m\rrbracket.
\]
It is automatic by Appendix \ref{section:derhamsplit} that $R$ admits a standard flat $S$-linear connection as an $S[\bold{t}]$-module, and by the usual argument $\partial_{y_i} W$ acts null-homotopically on $X$, with homotopy $\mu_i = \partial_{y_i}(d_X)$.
\end{example}

With the notation of the proposition, the results of Section \ref{section:idempotentformula} tell us that there is a diagram in the homotopy category of linear factorisations of $W$ over $S$
\begin{equation}\label{eq:push_chern_diagram1}
\xymatrix@+3pc{
X/\bold{t} X \ar@<-0.8ex>[r]_(0.6){\psi} & X[n] \ar@<-0.8ex>[l]_(0.4){\vartheta}
}
\end{equation}
with $\psi \circ \vartheta = 1$ and
\begin{equation}\label{eq:push_chern_diagram2}
e = \vartheta \psi = \frac{1}{n!} (-1)^{\binom{n+1}{2}} \lambda_1 \cdots \lambda_n \At_{S[\bold{t}]/S}(X)^n.
\end{equation}
Notice that $X/\bold{t} X$ is a finite rank matrix factorisation. The residue formula for the Chern character follows, as in the previous section, from taking the ``trace'' of $1_{X[n]} = \psi \vartheta$. Previously the appropriate trace was just the supertrace, but now we need to use the trace internal to the Calabi-Yau category $\hmf(S,W)$, which is the boundary-bulk map of \cite{kapli,murfet-2009,dyckmurf}.

Suppose for the moment that $m$ is even. For any finite rank matrix factorisation $Y$ of $W$ over $S$, it follows from \cite[Remark 5.18]{murfet-2009} that there is a canonically defined $J_W$-linear map
\begin{equation}\label{eq:beta_formula}
\begin{split}
&\beta_Y: \uHom(Y, Y) \lto J_W,\\
\beta_Y(f) = \;&(-1)^{\binom{m}{2}} \str_S( f \partial_{y_1}(d_Y) \cdots \partial_{y_m}(d_Y) )
\end{split}
\end{equation}
so that the Chern character of $Y$ is $\ch(Y) = \beta_Y(1_Y) \in J_W$. Below we will make use of the fact that $\beta_Y(f)$ can be computed using, in place of the partial derivative $\partial_{y_i}(d_Y)$, any $R$-linear homotopy $\lambda_i$ with $\lambda_i d_Y + d_Y \lambda_i = \partial_{y_i} W \cdot 1_Y$ (see Lemma \ref{lemma:indept_homotopy_chern}). In the approach of \cite{murfet-2009} one derives properties of $\beta_Y$ from the fact that it composes with the embedding $\iota$ of local duality
\begin{gather*}
\iota: J_W \lto H^m_{\mf{m}}(S),\\
r \mapsto \big[ r \;/\; \partial_{y_1} W, \ldots, \partial_{y_m} W \big]
\end{gather*}
(with $[ - / - ]$ denoting a generalised fraction) to give the trace map $\Tr_Y$ of \cite[Definition 5.11]{murfet-2009}. That is, there is a commutative diagram
\begin{equation}\label{eq:commute_iota}
\xymatrix{
\uHom(Y,Y) \ar[dr]_{\beta_Y} \ar[rr]^{\Tr_Y} & & H^m_{\mf{m}}(S).\\
& J_W \ar[ur]_{\iota}
}
\end{equation}
In \cite[Theorem 4.11]{murfet-2009} it was shown that for morphisms $f: Y \lto Z$ and $g: Z \lto Y$ we have $\Tr_Z( fg ) = \Tr_Y( gf )$, hence $\beta_Z(fg) = \beta_Y(gf)$, and further for a morphism $f: Y \lto Y$ we have $\Tr_Y(f) = - \Tr_{Y[1]}(f[1])$, hence $\beta_Y(f) = - \beta_{Y[1]}(f[1])$. Given a matrix factorisation $Y$ of infinite rank homotopy equivalent to a matrix factorisation of finite rank, any such finite model has the same Chern character and we define this to be the Chern character of $Y$.

\begin{proof}[Proof of Proposition \ref{prop:chern_char_psh}]
If $m$ is odd then the Chern character is zero and the right hand side involves a supertrace of an odd supermatrix, hence is also zero, so we may assume that $m$ is even. Let us return to the diagram (\ref{eq:push_chern_diagram1}) and the idempotent $e$. In the category $\hmf(S,W)$ all idempotents split, so we can find a finite rank matrix factorisation $Z$ and morphisms $f,g$ as in the diagram
\[
\xymatrix@+3pc{
X/\bold{t} X \ar@<-0.8ex>[r]_(0.6){g} & Z \ar@<-0.8ex>[l]_(0.4){f}
}
\]
such that $g \circ f = 1_Z$ and $f \circ g = e$. But then $Z[n] \cong \varphi_*(X)$ in $\HMF(S,W)$ and hence
\begin{align*}
\ch(\varphi_*(X)) &= \beta_{Z[n]}(1_{Z[n]}) = (-1)^n \beta_Z(1_Z)\\
&= (-1)^n \beta_Z(g \circ f)\\
&= (-1)^n \beta_{X/\bold{t} X}( f \circ g )\\
&= (-1)^{n + \binom{m}{2}} \str_S( \mu_1 \cdots \mu_m e )\\
&= \frac{1}{n!} (-1)^{\binom{n}{2} + \binom{m}{2}} \str_S( \mu_1 \cdots \mu_m \lambda_1 \cdots \lambda_n \At_{S[\bold{t}]/S}(X)^n )
\end{align*}
which by Proposition \ref{prop:residues_on_objects} is equal to the right hand side of (\ref{eq:chern_char_psh_fom}).
\end{proof}

\section{Convolution}\label{section:fusingtop}

Our main application of the finite pushforward is to give a description of the convolution of matrix factorisation kernels. We begin by explaining exactly what we mean by the convolution; this amounts to adapting standard material on integral functors, for which we refer the reader to \cite{huybrechts}. 

Let $k$ be a field of characteristic zero and $\bold{x} = \{ x_1,\ldots,x_n \}$ and $\bold{y} = \{ y_1, \ldots, y_m \}$ sets of variables. Given potentials $W \in k[\bold{x}]$ and $V \in k[\bold{y}]$ let $\hf(k[\bold{x}],W)$ and $\hf(k[\bold{y}],V)$ denote the homotopy categories of linear factorisations, $\pi_{\bold{x}}: k[\bold{x}] \lto k[\bold{x},\bold{y}]$ and $\pi_{\bold{y}}: k[\bold{y}] \lto k[\bold{x},\bold{y}]$ the canonical ring morphisms and write $\pi^*_{\bold{x}} = (-) \otimes_{k[\bold{x}]} k[\bold{x},\bold{y}] = (-) \otimes_k k[\bold{y}], (\pi_{\bold{x}})_*$ for the extension and restriction of scalars functors respectively.

\begin{definition}\label{defn:kernel_functor} Given a linear factorisation $E$ of $V - W$ over $k[\bold{x},\bold{y}]$ there is a $k$-linear functor
\[
\xymatrix@C+1pc{
\hf(k[\bold{x}],W) \ar[r]^(0.45){\pi^*_{\bold{x}}} & \hf(k[\bold{x},\bold{y}], W) \ar[r]^{E \otimes -} & \hf(k[\bold{x},\bold{y}], V) \ar[r]^(0.55){(\pi_{\bold{y}})_*} & \hf(k[\bold{y}],V)
}
\]
which we denote by $\Phi_E$. This is called the \emph{integral functor} with \emph{kernel} $E$.
\end{definition}

Given a third set of variables $\bold{z} = \{ z_1, \ldots, z_p \}$, consider the following commutative diagram,~with all morphisms canonical (there are duplicate labels, but it should be clear what we mean)
\begin{equation}\label{eq:diagram_ring_maps}
\xymatrix@C+1pc{
k[\bold{x}] \ar[dr]^{\pi_{\bold{x}}} \ar@/_3pc/[dddrr]_{\pi_{\bold{x}}} & & k[\bold{y}] \ar[dl]_{\pi_{\bold{y}}}\ar[dr]^{\pi_{\bold{y}}} & & k[\bold{z}] \ar[dl]_{\pi_{\bold{z}}}\ar@/^2pc/[dddll]^{\pi_{\bold{z}}}\\
& k[\bold{x},\bold{y}]\ar[dr]_{\pi_{\bold{x}\bold{y}}} & & k[\bold{y},\bold{z}]\ar[dl]^{\pi_{\bold{y}\bold{z}}}\\
& & k[\bold{x},\bold{y},\bold{z}] \\
& & k[\bold{x},\bold{z}] \ar[u]_{\pi_{\bold{x}\bold{z}}}
}
\end{equation}
Suppose that we are given linear factorisations $E \in \hf(k[\bold{x},\bold{y}], V - W)$ and $F \in \hf( k[\bold{y},\bold{z}], U - V)$ for some potential $U \in k[\bold{z}]$. Then $F \otimes_{k[\bold{y}]} E \cong \pi_{\bold{y}\bold{z}}^*(F) \otimes_{k[\bold{x},\bold{y},\bold{z}]} \pi_{\bold{x}\bold{y}}^*(E)$ is the external tensor product of $F$ and $E$. This is a linear factorisation of $U - W$ over $k[\bold{x},\bold{y},\bold{z}]$, and we define:

\begin{definition} The \emph{convolution} of $F$ and $E$ is the pushforward
\[
F \star E = (\pi_{\bold{x}\bold{z}})_*( F \otimes_{k[\bold{y}]} E) \in \hf( k[\bold{x},\bold{z}], U - W).
\]
\end{definition}

The kernels so far defined give rise to functors
\[
\xymatrix@C+1pc{
\hf(k[\bold{x}],W) \ar@/_2pc/[rr]_{\Phi_{F \star E}}\ar[r]^{\Phi_E} & \hf(k[\bold{y}],V) \ar[r]^{\Phi_F} & \hf(k[\bold{z}],U)
}
\]
and it is a simple exercise to check that composition corresponds to the convolution of kernels.

\begin{lemma}\label{lemma:convolution_is_composition} There is a natural equivalence $\Phi_F \circ \Phi_E \cong \Phi_{F \star E}$.
\end{lemma}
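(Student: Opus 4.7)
The plan is to unwind both sides and identify each with a common ``triple tensor product'' viewed as a linear factorisation over $k[\bold{z}]$. Directly from Definition \ref{defn:kernel_functor},
\[
\Phi_F(\Phi_E(X)) = (\pi_{\bold{z}})_*\bigl( F \otimes_{k[\bold{y}]} (\pi_{\bold{y}})_*(E \otimes_{k[\bold{x}]} X)\bigr),
\]
\[
\Phi_{F \star E}(X) = (\pi_{\bold{z}})_*\bigl( (\pi_{\bold{xz}})_*(F \otimes_{k[\bold{y}]} E) \otimes_{k[\bold{x}]} X\bigr).
\]
I would show that both sides are naturally isomorphic to the linear factorisation $T(X)$ obtained by equipping $F \otimes_{k[\bold{y}]} E \otimes_{k[\bold{x}]} X$ with its natural $k[\bold{x},\bold{y},\bold{z}]$-module structure (the $k[\bold{x}]$-, $k[\bold{y}]$-, and $k[\bold{z}]$-actions on the three tensor factors commute, so this is well defined) and then restricting scalars to $k[\bold{z}]$ along the canonical inclusion $k[\bold{z}] \hookrightarrow k[\bold{x},\bold{y},\bold{z}]$.

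The key ingredient is a trivial affine ``projection formula'': if $M$ is a $k[\bold{x},\bold{y}]$-module then, because the $k[\bold{x}]$-action on $M$ and the $k[\bold{z}]$-action on $F$ each commute with the tensor product over $k[\bold{y}]$, one has a natural identification
\[
F \otimes_{k[\bold{y}]} (\pi_{\bold{y}})_*(M) \cong (\pi_{\bold{yz}})_*(F \otimes_{k[\bold{y}]} M)
\]
of linear factorisations over $k[\bold{y},\bold{z}]$. Applying this with $M = E \otimes_{k[\bold{x}]} X$, composing with $(\pi_{\bold{z}})_*$, and using that $\pi_{\bold{yz}} \circ \pi_{\bold{z}} = \pi_{\bold{xz}} \circ \pi_{\bold{z}}$ as maps $k[\bold{z}] \to k[\bold{x},\bold{y},\bold{z}]$ (both equal the canonical inclusion, as visible in diagram (\ref{eq:diagram_ring_maps})), identifies $\Phi_F(\Phi_E(X))$ with $T(X)$. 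A parallel compatibility between $\otimes_{k[\bold{x}]}$ and $(\pi_{\bold{xz}})_*$---the $k[\bold{x}]$-action on $(\pi_{\bold{xz}})_*(F \otimes_{k[\bold{y}]} E)$ is via the canonical inclusion $k[\bold{x}] \hookrightarrow k[\bold{x},\bold{y},\bold{z}]$, so it commutes with everything in sight---identifies $\Phi_{F \star E}(X)$ with $T(X)$ as well.

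Naturality in $X$ of the resulting isomorphism is inherited from the naturality of each ingredient (tensor product, restriction of scalars, projection formula), so there is nothing further to check on that front. The only mildly technical step is sign bookkeeping: one must verify that the differential $d_F \otimes 1 \otimes 1 + 1 \otimes d_E \otimes 1 + 1 \otimes 1 \otimes d_X$ on $T(X)$, with the Koszul signs dictated by Definition \ref{defn:koszul_signs}, squares to $(U - W) \cdot 1$. This follows from $d_F^2 = (U-V) \cdot 1$, $d_E^2 = (V-W) \cdot 1$, $d_X^2 = W \cdot 1$ and cancellation of the cross terms under the sign convention, and one must track the same convention through the projection-formula isomorphisms above. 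I expect this bookkeeping to be the only obstacle; no deeper input is required beyond the generalities of Section \ref{section:fundamentals}.
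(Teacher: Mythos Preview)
Your approach is correct and is precisely the ``simple exercise'' the paper has in mind; the paper gives no proof beyond that remark, so your unwinding via a common triple tensor product $T(X)$ and the affine projection formula is exactly what is intended. One small slip: the differential on $T(X)$ squares to $U\cdot 1$, not $(U-W)\cdot 1$, since $d_F^2 + d_E^2 + d_X^2 = (U-V)+(V-W)+W = U$; this is consistent with $\Phi_{F\star E}$ landing in $\hf(k[\bold{z}],U)$.
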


Our aim is to understand the conditions under which $F \star E$ is homotopy equivalent to a finite rank matrix factorisation, and to describe the finite model in terms of an idempotent. If we want to work over power series rings instead of polynomial rings, then we should use completed tensor products and make some other adjustments; we give the details in Section \ref{section:convolution_powerseries}.

From now on the potentials $U \in k[\bold{z}]$ and $W \in k[\bold{x}]$ are arbitrary, but we assume that $V \in (\bold{y})^2$ and that the critical locus of $V$ consists of the origin. Equivalently, the Jacobi algebra
\[
J_V = k[\bold{y}]/(\partial_{y_1} V, \ldots, \partial_{y_m} V)
\]
is finite-dimensional over $k$. This implies that the sequence $\bold{t} = \{ \partial_{y_1} V, \ldots, \partial_{y_m} V \}$ is regular in $k[\bold{y}]$. We give ourselves \emph{finite rank} matrix factorisations
\[
E \in \hmf(k[\bold{x},\bold{y}], V - W), \qquad F \in \hmf(k[\bold{y},\bold{z}], U - V)
\]
with fixed homogeneous bases $\{ \xi_i \}_{i \in I}$ and $\{ \eta_j \}_{j \in J}$ respectively. We are interested in the convolution $F \star E = (\pi_{\bold{x}\bold{z}})_*( F \otimes_{k[\bold{y}]} E)$. This is a matrix factorisation of infinite rank, and we want to apply the main theorem to this pushforward. Observe that $d^2_E = V - W$ so we have for $1 \le i \le m$
\[
\partial_{y_i}(d_E) \cdot d_E + d_E \cdot \partial_{y_i}(d_E) = \partial_{y_i} V \cdot 1_{E}.
\]
Hence the sequence $\bold{t}$ acts null-homotopically on $F \otimes_{k[\bold{y}]} E$ with null-homotopies $\lambda_i = 1 \otimes \partial_{y_i}(d_E)$. Since $k[\bold{x},\bold{y},\bold{z}]$ and its quotient by the ideal generated by $\bold{t}$ are both free over $k[\bold{x},\bold{z}]$, Theorem \ref{theorem:first_attempt_e} applies to the ring morphism $\pi_{\bold{x}\bold{z}}: k[\bold{x},\bold{z}] \lto k[\bold{x},\bold{y},\bold{z}]$ and there is a diagram
\[
\xymatrix@+3pc{
F/\bold{t} F \otimes_{k[\bold{y}]} E/\bold{t} E = (F \otimes_{k[\bold{y}]} E) \otimes_{k[\bold{y}]} J_V \ar@<-0.8ex>[r]_(0.7){\psi} & (F \star E)[m] \ar@<-0.8ex>[l]_(0.3){\vartheta}
}
\]
in $\HMF(k[\bold{x},\bold{z}], U - W)$ with $\psi \circ \vartheta = 1$ and $F/\bold{t} F \otimes_{k[\bold{y}]} E/\bold{t} E$ finite rank over $k[\bold{x},\bold{z}]$. To describe the idempotent $e = \vartheta \circ \psi$ in terms of the Atiyah class we need a standard flat $k[\bold{x},\bold{z}]$-linear connection $\nabla^0$ on $k[\bold{x},\bold{y},\bold{z}]$ as a module over $k[\bold{x},\bold{z},\bold{t}]$. It is enough to produce a standard, flat $k$-linear connection
\begin{equation}\label{eq:nablaboldy}
\nabla^0_{\bold{y}}: k[\bold{y}] \lto k[\bold{y}] \otimes_{k[\bold{t}]} \Omega^1_{k[\bold{t}]/k},
\end{equation}
since tensoring $\nabla^0_{\bold{y}}$ with $k[\bold{x},\bold{z}]$ produces the desired connection $\nabla^0$ on $k[\bold{x},\bold{y},\bold{z}]$. If $V$ is homogeneous the existence of such a connection is automatic (see Remark \ref{remark:homogeneous_get_connection} below) but in general it is easier to pass from $k[\bold{y}]$ to its completion $k\llbracket \bold{y}\rrbracket$ where a connection is always guaranteed to exist. To see how this works, consider the commutative diagram of canonical ring morphisms 
\[
\xymatrix@C+1pc@R-0.5pc{
& k[\bold{x},\bold{y},\bold{z}] \ar[dd]^{\omega}\\
k[\bold{x},\bold{z}] \ar[ur]^{\pi_{\bold{xz}}} \ar[dr]_{\rho}\\
& k\llbracket \bold{y}\rrbracket[\bold{x},\bold{z}].
}
\]
Completing $E$ and $F$ yields finite rank matrix factorisations $\widehat{E},\widehat{F}$ over $k\llbracket \bold{y}\rrbracket[\bold{x}]$ and $k\llbracket \bold{y}\rrbracket[\bold{z}]$ respectively, and $\omega^*(F \otimes_{k[\bold{y}]} E) = \widehat{F} \otimes_{k\llbracket \bold{y}\rrbracket} \widehat{E}$. It follows from Remark \ref{remark:completionandpush} that this completion commutes with the pushforward: the natural transformation $\pi_* \lto \rho_* \circ \omega^*$ yields a homotopy equivalence
\[
F \star E = \pi_*(F \otimes_{k[\bold{y}]} E) \lto \rho_*( \widehat{F} \otimes_{k\llbracket \bold{y}\rrbracket} \widehat{E} ) =: \widehat{F} \star \widehat{E}
\]
over $k[\bold{x},\bold{z}]$. We therefore obtain a description of $e$ in terms of an Atiyah class:

\begin{theorem}\label{theorem:defectfusion} There is a diagram in the homotopy category $\HMF(k[\bold{x},\bold{z}], U - W)$
\begin{equation}\label{eq:push_fuse1}
\xymatrix@+3pc{
F/\bold{t} F \otimes_{k[\bold{y}]} E/\bold{t} E \ar@<-0.8ex>[r]_(0.6){\psi} & (F \star E)[m] \ar@<-0.8ex>[l]_(0.4){\vartheta}
}
\end{equation}
in which $\psi \circ \vartheta = 1$ and the idempotent $e = \vartheta \circ \psi$ is given by the formula
\begin{equation}\label{eq:idempotent_for_fusion}
e = \frac{1}{m!} (-1)^{\binom{m+1}{2}} \cdot \partial_{y_1}(d_E) \cdots \partial_{y_m}(d_E) \cdot \At(\widehat{F} \otimes_{k\llbracket \bold{y}\rrbracket} \widehat{E})^m
\end{equation}
where the Atiyah class is relative to $k[\bold{x},\bold{z},\bold{t}]/k[\bold{x},\bold{z}].$
\end{theorem}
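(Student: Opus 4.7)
The plan is to reduce the statement to a direct application of Theorem \ref{theorem:main}, with the only real subtlety being the construction of a standard flat connection. First I would verify that all the hypotheses of Section \ref{section:idempotentformula} are met for the ring morphism $\pi_{\bold{xz}}: k[\bold{x},\bold{z}] \lto k[\bold{x},\bold{y},\bold{z}]$ acting on the finite rank matrix factorisation $F \otimes_{k[\bold{y}]} E$ of $U - W$: the sequence $\bold{t} = \{\partial_{y_1} V, \ldots, \partial_{y_m} V\}$ is regular in $k[\bold{y}]$ (hence quasi-regular in $k[\bold{x},\bold{y},\bold{z}]$ by flatness) because $J_V$ is finite-dimensional; the quotient $k[\bold{x},\bold{y},\bold{z}]/\bold{t}\, k[\bold{x},\bold{y},\bold{z}] \cong J_V \otimes_k k[\bold{x},\bold{z}]$ is free of finite rank over $k[\bold{x},\bold{z}]$; and by Remark \ref{remark:partial_gives_homotopy} each $\partial_{y_i} V$ acts null-homotopically on $E$ with homotopy $\partial_{y_i}(d_E)$, hence on $F \otimes_{k[\bold{y}]} E$ with homotopy $\lambda_i = 1 \otimes \partial_{y_i}(d_E)$.

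The one remaining ingredient required by Theorem \ref{theorem:main} is a standard flat $k[\bold{x},\bold{z}]$-linear connection on $k[\bold{x},\bold{y},\bold{z}]$ viewed as a $k[\bold{x},\bold{z}][\bold{t}]$-module. This is precisely where I expect the main (if minor) obstacle to lie, since such a connection need not exist in the polynomial setting in general. The standard workaround, already indicated in the paragraph preceding the theorem, is to pass to the $\bold{y}$-adic completion. More precisely, I would factor $\pi_{\bold{xz}}$ as
\[
k[\bold{x},\bold{z}] \xlto{\rho} k\llbracket \bold{y}\rrbracket[\bold{x},\bold{z}] \xlto{\;} k\llbracket \bold{y}\rrbracket[\bold{x},\bold{z}]
\]
and apply Remark \ref{remark:completionandpush} to the flat morphism $\omega: k[\bold{x},\bold{y},\bold{z}] \lto k\llbracket \bold{y}\rrbracket[\bold{x},\bold{z}]$, which is an isomorphism modulo $\bold{t}$. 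This gives a canonical homotopy equivalence $F \star E \cong \widehat{F} \star \widehat{E}$ over $k[\bold{x},\bold{z}]$, identifies the idempotents produced by the two constructions on the common quotient $F/\bold{t}F \otimes_{k[\bold{y}]} E/\bold{t}E$, and reduces the problem to the complete ring $k\llbracket \bold{y}\rrbracket[\bold{x},\bold{z}]$.

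Over this completion, Appendix \ref{section:derhamsplit} (cf.\ Example \ref{example:localringnabla}) produces a standard flat $k$-linear connection $\nabla^0_{\bold{y}}: k\llbracket \bold{y}\rrbracket \lto k\llbracket \bold{y}\rrbracket \otimes_{k[\bold{t}]} \Omega^1_{k[\bold{t}]/k}$, and tensoring with $k[\bold{x},\bold{z}]$ yields the required standard flat $k[\bold{x},\bold{z}]$-linear connection on $k\llbracket \bold{y}\rrbracket[\bold{x},\bold{z}]$ as a $k[\bold{x},\bold{z}][\bold{t}]$-module. At this point Theorem \ref{theorem:main}, applied to $\widehat{F} \otimes_{k\llbracket \bold{y}\rrbracket} \widehat{E}$ with the null-homotopies $\lambda_i = 1 \otimes \partial_{y_i}(d_{\widehat{E}})$, produces morphisms $\psi, \vartheta$ with $\psi \circ \vartheta = 1$ and idempotent
\[
e = \frac{1}{m!}(-1)^{\binom{m+1}{2}} \lambda_1 \cdots \lambda_m \cdot \At_{k[\bold{x},\bold{z},\bold{t}]/k[\bold{x},\bold{z}]}(\widehat{F} \otimes_{k\llbracket \bold{y}\rrbracket} \widehat{E})^m.
\]
Substituting $\lambda_i = 1 \otimes \partial_{y_i}(d_{\widehat{E}})$ and noting that these factors can be pulled through the tensor product (the partial derivatives act only on the $E$-factor) yields the formula in the statement, completing the proof.
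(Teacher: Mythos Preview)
Your proposal is correct and follows essentially the same approach as the paper's proof: pass to the $\bold{y}$-adic completion so that Appendix \ref{section:derhamsplit} furnishes a standard flat connection on $k\llbracket \bold{y}\rrbracket$ (extended by tensoring with $k[\bold{x},\bold{z}]$), apply Theorem \ref{theorem:main} to $\widehat{F} \otimes_{k\llbracket \bold{y}\rrbracket} \widehat{E}$ along $\rho: k[\bold{x},\bold{z}] \lto k\llbracket \bold{y}\rrbracket[\bold{x},\bold{z}]$ with null-homotopies $\lambda_i = 1 \otimes \partial_{y_i}(d_E)$, and invoke Remark \ref{remark:completionandpush} to identify the resulting diagram and idempotent with those over the polynomial ring. (Your displayed factorisation has a harmless typo---the second arrow should presumably go back to $k[\bold{x},\bold{y},\bold{z}]$ or be omitted---but the argument is clear.)
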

\begin{proof}
There is a standard flat $k$-linear connection $k\llbracket \bold{y}\rrbracket \lto k\llbracket \bold{y}\rrbracket \otimes_{k[\bold{t}]} \Omega^1_{k[\bold{t}]/k}$ by Appendix \ref{section:derhamsplit}, and this extends to a standard flat $k[\bold{x},\bold{z}]$-linear connection on $k\llbracket \bold{y} \rrbracket [\bold{x},\bold{z}]$ as a $k[\bold{x},\bold{z},\bold{t}]$-module. It follows from Theorem \ref{theorem:main} applied to the morphism $\rho: k[\bold{x},\bold{z}] \lto k\llbracket \bold{y} \rrbracket[\bold{x},\bold{z}]$ and the factorisation $\widehat{F} \otimes_{k\llbracket \bold{y} \rrbracket} \widehat{E}$, with the homotopies $\lambda_i$ as above, that there is a diagram
\[
\xymatrix@+3pc{
(\widehat{F} \otimes_{k\llbracket \bold{y}\rrbracket} \widehat{E}) \otimes_{k\llbracket \bold{y}\rrbracket} J_V \ar@<-0.8ex>[r]_(0.6){\psi_1} & (\widehat{F} \star \widehat{E})[m] \ar@<-0.8ex>[l]_(0.4){\vartheta_1}
}
\]
in $\HMF(k[\bold{x},\bold{z}], U - W)$ with $\psi_1 \circ \vartheta_1 = 1$ and the idempotent $\vartheta_1 \circ \psi_1$ given by the right hand side of (\ref{eq:idempotent_for_fusion}). But by Remark \ref{remark:completionandpush} this diagram is homotopy equivalent to (\ref{eq:push_fuse1}), completing the proof.
\end{proof}

Evaluating the kernel functor $\Phi_F$ on an object is a special case of convolution: if we take $W = 0$ and the set of variables $\bold{x}$ to be empty, then for an object $E \in \hmf(k[\bold{y}], V)$ we have $\Phi_F(E) = F \star E$. Hence if $J_V$ is finite-dimensional as above, we infer from the theorem that $\Phi_F(E)$ is a direct summand in $\HMF(k[\bold{z}], U)$ of a finite rank matrix factorisation, namely, the $m$-th suspension of $F/\bold{t} F \otimes_{k[\bold{y}]} E/\bold{t} E$.

\begin{remark} By Corollary \ref{cor:simplified_e} we can rewrite the idempotent as
\[
e = \frac{1}{m!} (-1)^{\binom{m}{2}} \sum_{\tau \in S_n} \sgn(\tau) \cdot \partial_{y_1}(d_E) \cdots \partial_{y_m}(d_E) \cdot [\partial_{\tau(1)}, d_F + d_E] \cdots [\partial_{\tau(n)}, d_F + d_E]
\]
where $\partial_i = \partial/\partial t_i$ is the operator on $\widehat{F} \otimes_{k\llbracket \bold{y} \rrbracket} \widehat{E}$ induced by the connection on $k\llbracket \bold{y} \rrbracket[\bold{x},\bold{z}]$ and $d_F + d_E$ denotes the differential. The commutator $[\partial_i, d_F + d_E]$ is $k[\bold{x},\bold{z},\bold{t}]$-linear, and passes to a $k[\bold{x},\bold{z}]$-linear operator on $(\widehat{F} \otimes_{k\llbracket \bold{y} \rrbracket} \widehat{E}) \otimes_{k\llbracket \bold{y} \rrbracket} J_V \cong F/\bold{t} F \otimes_{k[\bold{y}]} E/\bold{t} E$.
\end{remark}

\begin{remark}\label{remark:homogeneous_get_connection} If $V$ is homogeneous, then $\bold{t}$ is a homogeneous system of parameters and consequently $k[\bold{x}]$ is free of finite rank over $k[\bold{t}]$ (see Remark \ref{example:globalringnabla}). Hence a standard flat $k$-linear connection as in (\ref{eq:nablaboldy}) exists, and we do not have to pass to the completion; by this we mean that the statement of the theorem holds with $\At(\widehat{F} \otimes_{k\llbracket \bold{y}\rrbracket} \widehat{E})$ replaced by $\At(F \otimes_{k[\bold{y}]} E)$ in the formula for $e$.
\end{remark}

\begin{remark} In the notation of the theorem:
\begin{itemize}
\item[(i)] Taking the partial derivatives of $d_F^2 = U - V$ we learn that $-\partial_{y_i}(d_F)$ gives a null-homotopy for the action of $\partial_{y_i} V$, and replacing $\partial_{y_i}(d_E)$ by $-\partial_{y_i}(d_F)$ in (\ref{eq:idempotent_for_fusion}) we obtain another idempotent splitting to give $(F \star E)[m]$.
\item[(ii)] It follows from remarks in Section \ref{section:koszulcpxs} that there is a homotopy equivalence over $k[\bold{x},\bold{z}]$ between $F/\bold{t} F \otimes_{k[\bold{y}]} E/\bold{t} E$ and $((F \star E) \oplus (F \star E)[1])^{\oplus 2^{m-1}}$.
\end{itemize}
\end{remark}

\subsection{Chern character of the convolution}

Next we compute the Chern character of $F \star E$. Observe that the idempotent $e$ on $F/\bold{t} F \otimes_{k[\bold{y}]} E/\bold{t} E$ need not split in the homotopy category of finite rank matrix factorisations over $k[\bold{x},\bold{z}]$. Passing to $k\llbracket \bold{x},\bold{z}\rrbracket$ we can split $e$ and derive a formula for the Chern character. To this end, let us now assume that the critical loci of $U$ and $W$ are equal to the origin in their respective affine spaces, i.e. that the Jacobi algebras
\begin{align*}
J_W &= k[\bold{x}]/(\partial_{x_1} W, \ldots, \partial_{x_n} W),\\
J_U &= k[\bold{z}]/(\partial_{z_1} U, \ldots, \partial_{z_p} U)
\end{align*}
are finite-dimensional over $k$. The kernels $E$ and $F$ have Chern characters
\begin{align*}
\ch(E) &= (-1)^{\binom{n+m}{2}} \str_{k[\bold{x},\bold{y}]}\big( \partial_{\bold{x}}(d_E) \cdot \partial_{\bold{y}}(d_E) \big),\\
\ch(F) &= (-1)^{\binom{m+p}{2}} \str_{k[\bold{y},\bold{z}]}\big( \partial_{\bold{y}}(d_F) \cdot \partial_{\bold{z}}(d_F) \big),
\end{align*}
where the abbreviations are hopefully self-explanatory: for example, $\partial_{\bold{x}}(d_E) = \partial_{x_1}(d_E) \cdots \partial_{x_n}(d_E)$. Strictly speaking the Chern character $\ch(E)$ should be viewed as an element of the Jacobi algebra $J_W \otimes_k J_V$, and similarly $\ch(F)$ is defined as an element of $J_V \otimes_k J_U$.

\begin{corollary}\label{corollary:chern_char_convolution} The Chern character of the convolution is
\[
\ch( F \star E ) = (-1)^{\binom{m}{2}} \Ress{k[\bold{x},\bold{y},\bold{z}]/k[\bold{x},\bold{z}]}\begin{bmatrix} \ch(F) \cdot \ch(E) \cdot \ud \bold{y} \\ \partial_{y_1} V, \ldots, \partial_{y_m} V \end{bmatrix},
\]
where this equality is understood as elements of $J_W \otimes_k J_U$.
\end{corollary}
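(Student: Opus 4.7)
The plan is to apply Proposition \ref{prop:chern_char_psh} directly to the pushforward along $\pi_{\bold{x}\bold{z}}$ that defines $F \star E$, and then to collapse the resulting residue using the tensor product structure and a vanishing argument modelled on the Cardy-type calculation in the proof of Theorem \ref{theorem:cardycond}. The Chern character $\ch(F \star E)$ is defined (and lands in $J_W \otimes_k J_U$) only after base-changing to the complete local ring $k\llbracket \bold{x}, \bold{z}\rrbracket$; further completing the $\bold{y}$-variables via Remark \ref{remark:completionandpush} leaves the pushforward homotopy-equivalent over $k\llbracket\bold{x},\bold{z}\rrbracket$, so I would work with $R = k\llbracket \bold{x}, \bold{y}, \bold{z}\rrbracket$ and $S = k\llbracket \bold{x}, \bold{z}\rrbracket$, where Appendix \ref{section:derhamsplit} furnishes the requisite standard flat connection. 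Proposition \ref{prop:chern_char_psh} then applies with quasi-regular sequence $\bold{t} = \{\partial_{y_i} V\}_{i=1}^m$, null-homotopies $\lambda_i = 1 \otimes \partial_{y_i}(d_E)$ for the $t_i$, and, for the base-ring potential $U-W$, null-homotopies $\mu_{x_j} = -\partial_{x_j}(d_E)$ for $-\partial_{x_j}W$ and $\mu_{z_j} = \partial_{z_j}(d_F)$ for $\partial_{z_j}U$.

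The formula produced by Proposition \ref{prop:chern_char_psh} involves the wedge power $\ud_{R/S}(d_{\widehat F \otimes \widehat E})^{\wedge m}$, which, since $\ud_{R/S}$ is the K\"ahler differential in the $\bold{y}$-directions only, equals $(\ud_{R/S}(d_F) + \ud_{R/S}(d_E))^{\wedge m}$. I would expand this binomially, indexing summands by a choice at each wedge position of $\ud(d_F)$ versus $\ud(d_E)$. Because $\partial_{y_i}(d_F) \otimes 1$ and $1 \otimes \partial_{y_j}(d_E)$ act on disjoint tensor factors and are both odd, they super-commute, so each summand rearranges up to Koszul signs into a product of operators acting purely on $\widehat F$ followed by operators acting purely on $\widehat E$; the supertrace $\str_R$ accordingly factorises as $\str_{\widehat F} \cdot \str_{\widehat E}$.

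To pick out the one surviving summand, I would invoke the vanishing principle used in the Cardy argument: for a finite rank matrix factorisation $X$ of a potential over a complete local ring in $N$ variables, the supertrace $\str(\alpha \cdot \partial_{i_1}(d_X) \cdots \partial_{i_q}(d_X))$ vanishes modulo the Jacobian ideal unless $q = N$. On the $\widehat E$-side ($N = n + m$), the operator carries $n$ $\bold{x}$-partials from $\mu_{\bold{x}}$, $m$ $\bold{y}$-partials from $\lambda_{\bold{y}}$, and any additional $\bold{y}$-partials contributed by $\ud(d_E)$-factors; on the $\widehat F$-side ($N = m + p$), it carries $p$ $\bold{z}$-partials and the remaining $\bold{y}$-partials from $\ud(d_F)$-factors. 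The two vanishing conditions simultaneously force all $m$ wedge-positions to select $\ud(d_F)$. Symmetrising over the resulting $\tau \in S_m$ and invoking the antisymmetrisation identity of the form used in the Cardy proof cancels the $1/m!$, and after rearranging the trace into the shapes of $\ch(F)$ and $\ch(E)$ one recovers the product $\ch(F) \cdot \ch(E)$ inside the residue. The main obstacle will be the bookkeeping of signs: those produced by Proposition \ref{prop:chern_char_psh}, by super-commutation across the tensor product, by the binomial expansion, and by matching the raw supertraces against the sign-twisted Chern characters; these should combine to give exactly the stated $(-1)^{\binom{m}{2}}$.
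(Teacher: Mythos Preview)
Your plan is essentially the paper's proof: invoke the argument of Proposition~\ref{prop:chern_char_psh}, expand $\ud_{k[\bold{y}]/k}(d_F + d_E)^{\wedge m}$, factor the supertrace across the tensor product, and use Lemma~\ref{lemma:nonfullstrvanishes} to isolate the $\ch(F)\cdot\ch(E)$ term (the paper uses only the $F$-side vanishing, but your $E$-side argument via repeated indices and Lemma~\ref{eq:lemmaordchern} is equally valid). The one refinement to be aware of is that Lemma~\ref{lemma:nonfullstrvanishes} gives vanishing only modulo the full Jacobian ideal of $U-V$ (resp.\ $V-W$), not modulo $(\partial_{\bold{y}} V)$ alone, so the paper pairs against an arbitrary $g \in J_W \otimes_k J_U$ and invokes transitivity of the residue symbol \cite{lipman92} to pass to a single residue with $\partial_{\bold{x}} W, \partial_{\bold{y}} V, \partial_{\bold{z}} U$ all in the denominator before applying the lemma; in your formulation the $\partial_{\bold{x}} W$- and $\partial_{\bold{z}} U$-components of the vanishing must instead be absorbed by the final projection to $J_W \otimes_k J_U$.
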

\begin{proof}
Here $\ch(F \star E)$ denotes the Chern character of an object of $\hmf(k\llbracket \bold{x},\bold{z}\rrbracket, U - W)$ homotopy equivalent to the completion $X = (F \star E) \otimes_{k[\bold{x},\bold{z}]} k\llbracket \bold{x},\bold{z}\rrbracket$. Such an object exists because this homotopy category is idempotent complete, and by the previous theorem $X$ is a direct summand in $\HMF$ of
\[
Y = (F/\bold{t} F \otimes_{k[\bold{y}]} E/\bold{t} E) \otimes_{k[\bold{x},\bold{z}]} k\llbracket \bold{x},\bold{z}\rrbracket.
\]
If $Z$ is a finite rank matrix factorisation splitting $e$ then $X$ is homotopy equivalent to $Z[m]$, and by the argument given in the proof of Proposition \ref{prop:chern_char_psh} we have 
\begin{align*}
\ch(F \star E) &= \ch( Z[m] )\\
&= (-1)^{m+\binom{n+p}{2}} \str\big( \partial_{\bold{x}}(d_Y) \partial_{\bold{z}}(d_Y) e \big)\\
&= \frac{1}{m!} (-1)^{\binom{m}{2} + \binom{n+p}{2}} \str\big( \partial_{\bold{x}}(d_Y) \partial_{\bold{z}}(d_Y) \partial_{\bold{y}}(d_E) \At(\widehat{F} \otimes_{k\llbracket \bold{y}\rrbracket} \widehat{E})^m \big)\\
&= \frac{1}{m!} (-1)^{\binom{m+1}{2} + \binom{n+p}{2}} \Ress{k[\bold{x},\bold{y},\bold{z}]/k[\bold{x},\bold{z}]} \begin{bmatrix} \str\big( \partial_{\bold{x}}(d ) \partial_{\bold{z}}(d) \partial_{\bold{y}}(d_E) \cdot \ud_{k[\bold{y}]/k}( d )^{\wedge m}\big) \ud \bold{y} \\ \partial_{y_1} V, \ldots, \partial_{y_m} V \end{bmatrix}.
\end{align*}
In the last line $d$ stands for the differential $d_F + d_E$ on $F \otimes_{k[\bold{y}]} E$ and the supertrace is of $k[\bold{x},\bold{y},\bold{z}]$-linear maps on this module. We have implicitly used the functoriality of the residue symbol (see \cite[Proposition 2.3]{Lipman87}) to go from a residue relative to $k\llbracket \bold{y}\rrbracket[\bold{x},\bold{z}]/k[\bold{x},\bold{z}]$, which is what Proposition \ref{prop:chern_char_psh} provides us with, to a residue relative to $k[\bold{x},\bold{y},\bold{z}]/k[\bold{x},\bold{z}]$. Clearly
\[
\partial_{\bold{x}}(d ) \partial_{\bold{z}}(d) \partial_{\bold{y}}(d_E) = (-1)^{np} \partial_{\bold{z}}(d_F) \otimes \partial_{\bold{x}}(d_E) \partial_{\bold{y}}(d_E).
\]
and
\[
\ud_{k[\bold{y}]/k}( d ) = \ud_{k[\bold{y}]/k}( d_F + d_E ) = \sum_{i=1}^m( \partial_{y_i}(d_F) + \partial_{y_i}(d_E) ) \ud y_i
\]
so the expression for $\ch(F \star E)$ expands as a linear combination of terms of the following form, for $\{i_1,\ldots,i_a\}$ a subset of $\{1,\ldots,m\}$ and $\gamma \in k[\bold{x},\bold{y}]$ a supertrace over $E$ which we suppress
\begin{equation}\label{eq:chern_convole1}
\Ress{k[\bold{x},\bold{y},\bold{z}]/k[\bold{x},\bold{z}]} \begin{bmatrix} \str\big(\partial_{\bold{z}}(d_F) \partial_{y_{i_1}}(d_F) \cdots \partial_{y_{i_a}}(d_F)\big) \cdot \gamma \cdot \ud \bold{y} \\ \partial_{y_1} V, \ldots, \partial_{y_m} V \end{bmatrix}.
\end{equation}
We claim that such a residue vanishes in $J_U \otimes_k J_W$ unless $a = m$. Using the nondegenerate residue pairing $\langle -,- \rangle$ on $J_W \otimes_k J_U$, it is enough to prove that $g$ is orthogonal to (\ref{eq:chern_convole1}) for every $g \in k[\bold{x},\bold{z}]$. But this particular pairing is
\begin{align*}
\Ress{k[\bold{x},\bold{z}]/k} \begin{bmatrix} g \cdot \Ress{k[\bold{x},\bold{y},\bold{z}]/k[\bold{x},\bold{z}]} \begin{bmatrix} \str\big(\partial_{\bold{z}}(d_F) \partial_{y_{i_1}}(d_F) \cdots \partial_{y_{i_a}}(d_F)\big) \cdot \gamma \cdot \ud \bold{y} \\ \partial_{y_1} V, \ldots, \partial_{y_m} V \end{bmatrix} \cdot \ud \bold{x} \wedge \ud \bold{z} \\ \partial_{x_1} W, \ldots, \partial_{x_n} W, \partial_{z_1} U, \ldots, \partial_{z_p} U\end{bmatrix}.
\end{align*}
Here we need to apply transitivity of the residue symbol \cite{lipman92} (see also \cite[p.244]{Conrad00}) which tells us that the above residue is equal to
\[
\Ress{k[\bold{x},\bold{y},\bold{z}]/k} \begin{bmatrix} g \cdot \str\big(\partial_{\bold{z}}(d_F) \partial_{y_{i_1}}(d_F) \cdots \partial_{y_{i_a}}(d_F)\big) \cdot \gamma \cdot \ud \bold{y}\wedge \ud \bold{x} \wedge \ud \bold{z} \;/\; \partial_{\bold{y}} V, \partial_{\bold{x}} W, \partial_{\bold{z}} U \end{bmatrix}
\]
with the obvious abbreviations. By the trick of Lemma \ref{lemma:nonfullstrvanishes} this vanishes unless $a = m$, whence
\[
\ch(F \star E) = \frac{1}{m!} \cdot A \cdot \Res \begin{bmatrix} \sum_{\tau \in S_m} \sgn(\tau) \str( \partial_{\bold{y}_\tau}(d_F) \partial_{\bold{z}}(d_F) ) \cdot \str( \partial_{\bold{x}}(d_E) \partial_{\bold{y}}(d_E) ) \ud \bold{y} \\ \partial_{y_1} V, \ldots, \partial_{y_m} V \end{bmatrix}
\]
where $A = (-1)^{\binom{m}{2} + \binom{m+p}{2} + \binom{n+m}{2}}$ and $\partial_{\bold{y}_\tau}(d_F) = \partial_{y_{\tau(1)}}(d_F) \cdots \partial_{y_{\tau(m)}}(d_F)$. Using again the transitivity of residues and the technique of Lemma \ref{eq:lemmaordchern} one checks that each of these permutations, with their signs, contributes equally to the sum, which is therefore equal to
\[
= (-1)^{\binom{m}{2} + \binom{m+p}{2} + \binom{n+m}{2}} \cdot \Res \begin{bmatrix} \str( \partial_{\bold{y}}(d_F) \partial_{\bold{z}}(d_F) ) \cdot \str( \partial_{\bold{x}}(d_E) \partial_{\bold{y}}(d_E) ) \ud \bold{y} \\ \partial_{y_1} V, \ldots, \partial_{y_m} V \end{bmatrix},
\]
and this is what we wanted to show.
\end{proof}

\begin{remark} We remark that the result in this section can also be deduced directly from the non-commutative
Riemann-Roch formalism developped in \cite{shklyarov} in combination with the
explicit formula for the Chern character and Mukai pairing given in
\cite{polishchuk}. The proof is similar to the arguments in the
proof of \cite[Theorem 3.1.4]{shklyarov}. The actual statement of \emph{loc.
cit.} corresponds to the case $n = 0$, so that $E$ is a finite rank matrix
factorisation of $V$ over $k[\bold{y}]$ and $F \star E$ is the image of $E$ under the functor $\Phi_F$. The corollary states that the diagram
\[
\xymatrix@C+4pc{
K_0( \hmf(k[\bold{y}], V )) \ar[d]_{\ch} \ar[r]^{\Phi_F} & K_0( \hmf(k[\bold{z}],U) ) \ar[d]^{\ch}\\
J_V \ar[r]_{(-1)^{\binom{m}{2}} \Res\big[ \ch(F) \cdot (-) \cdot \ud \bold{y} \;/\; \partial_{\bold{y}} V \big]} & J_U
}
\]
commutes.
\end{remark}

\subsection{Convolution over power series rings}\label{section:convolution_powerseries}

We begin with some remarks on completed tensor products, for details see \cite[Chapitre $0$,~\S~7.7]{EGA1}. Let $A \lto R$ and $A \lto S$ be local morphisms of local noetherian rings which induce isomorphisms between residue fields. Then $\mf{m} = \mf{m}_R \otimes_A S + R \otimes_A \mf{m}_S$ is a maximal ideal in $R \otimes_A S$ and we define the \emph{completed tensor product} $R \cotimes{A} S$ to be the $\mf{m}$-adic completion of $R \otimes_A S$. If $M$ is an $R$-module and $N$ an $S$-module then $M \cotimes{A} N$ denotes the $\mf{m}$-adic completion of $M \otimes_A N$.

Let $k$ be a field. If $R,S$ are noetherian $k$-algebras (not necessarily local) with respective maximal ideals $\mf{m}_R,\mf{m}_S$ such that $k \lto R/\mf{m}_R$ and $k \lto S/\mf{m}_S$ are bijective then $\mf{m} = \mf{m}_R \otimes S + R \otimes \mf{m}_S$ is a maximal ideal in $R \otimes_k S$ and there is a unique continuous morphism of $k$-algebras $\alpha$ making the diagram (in which the vertical completions are $\mf{m}$-adic)
\[
\xymatrix@C+1pc{
R \otimes_k S \ar[d]_{\can} \ar[r]^{\can} & \widehat{R} \otimes_k \widehat{S} \ar[d]^{\can}\\
(R \otimes_k S)\;\widehat{} \ar[r]_{\alpha} & \widehat{R} \cotimes{k} \widehat{S}
}
\]
commute, and this $\alpha$ is an isomorphism. In particular, for sets of variables $\bold{x},\bold{y}$ we have a canonical isomorphism of $k$-algebras $k\llbracket \bold{x}\rrbracket \cotimes{k} k\llbracket \bold{y}\rrbracket \cong k\llbracket \bold{x},\bold{y}\rrbracket$ and the ring morphism $\pi_{\bold{x}}: k\llbracket \bold{x}\rrbracket \lto k\llbracket \bold{x},\bold{y}\rrbracket$ factors as the following composite
\[
k\llbracket \bold{x}\rrbracket \lto k\llbracket \bold{x}\rrbracket \otimes_k k\llbracket \bold{y}\rrbracket \lto k\llbracket \bold{x}\rrbracket \cotimes{k} k\llbracket \bold{y}\rrbracket \cong k\llbracket \bold{x},\bold{y}\rrbracket.
\]

\begin{definition} We define the functor $\pi_{\bold{x}}^\bullet: \Modd k\llbracket \bold{x}\rrbracket \lto \Modd k\llbracket \bold{x},\bold{y}\rrbracket$ by extending scalars to $k\llbracket \bold{x}\rrbracket \otimes_k k\llbracket \bold{y}\rrbracket$ and then completing in the adic topology for the ideal $(\bold{x}) \otimes k\llbracket \bold{y}\rrbracket + k\llbracket \bold{x}\rrbracket \otimes (\bold{y})$,
\[
\pi_{\bold{x}}^\bullet(M) = M \cotimes{k} k\llbracket \bold{y}\rrbracket.
\]
\end{definition}

Notice that on finitely presented modules $\pi_{\bold{x}}^\bullet$ is canonically isomorphic to the extension of scalars functor $\pi_{\bold{x}}^*$, but in what follows we will need to also consider infinite modules. From now on $k$ denotes a field of characteristic zero and $\bold{x} = \{ x_1,\ldots,x_n \}, \bold{y} = \{ y_1, \ldots, y_m \}, \bold{z} = \{ z_1,\ldots,z_p \}$. We can draw an analogue of (\ref{eq:diagram_ring_maps}) and we use the same notation for the $\pi$'s, so for example $\pi_{\bold{y}\bold{z}}$ will denote some canonical map out of $k\llbracket \bold{y},\bold{z}\rrbracket$ (to a ring which should be clear from the context).

\begin{definition}\label{defn:kernel_functor_complete} Given $W \in k\llbracket \bold{x}\rrbracket, V \in k\llbracket \bold{y}\rrbracket$ and a linear factorisation $E$ of $V - W$ over $k\llbracket \bold{x},\bold{y}\rrbracket$ there is a $k$-linear functor
\[
\xymatrix@C+1pc{
\hf(k\llbracket \bold{x}\rrbracket,W) \ar[r]^(0.45){\pi^\bullet_{\bold{x}}} & \hf(k\llbracket \bold{x},\bold{y}\rrbracket, W) \ar[r]^{E \otimes -} & \hf(k\llbracket \bold{x},\bold{y}\rrbracket, V) \ar[r]^(0.55){(\pi_{\bold{y}})_*} & \hf(k\llbracket \bold{y}\rrbracket,V)
}
\]
which we denote by $\Psi_E$. If $U \in k\llbracket \bold{z}\rrbracket$ and $F$ is a linear factorisation of $U - V$ over $k\llbracket \bold{y},\bold{z}\rrbracket$ then we define the \emph{external tensor product} and \emph{convolution} of $F$ and $E$ respectively by 
\[
F \boxtimes E = \pi_{\bold{x}\bold{z}}^\bullet(F) \otimes_{k\llbracket \bold{x},\bold{y},\bold{z}\rrbracket} \pi_{\bold{y}\bold{z}}^\bullet(E), \qquad F \star E = (\pi_{\bold{x}\bold{z}})_*(F \boxtimes E).
\]
These are linear factorisations of $U - W$ over $k\llbracket \bold{x},\bold{y},\bold{z}\rrbracket$ and $k\llbracket \bold{x},\bold{z}\rrbracket$ respectively.
\end{definition}

It follows from the next lemma that $F \boxtimes E \cong F \cotimes{k\llbracket \bold{y}\rrbracket} E$ if both $E$ and $F$ are finitely generated.

\begin{lemma}\label{lemma:two_bifunc_completions} If $M$ is a finitely generated $k\llbracket \bold{x},\bold{y}\rrbracket$-module and $N$ is a finitely generated $k\llbracket \bold{y},\bold{z}\rrbracket$-module then there is a natural isomorphism of $k\llbracket \bold{x},\bold{y},\bold{z}\rrbracket$-modules
\begin{equation}\label{eq:two_bifunc_compl}
M \otimes_{k\llbracket \bold{x},\bold{y}\rrbracket} k\llbracket \bold{x},\bold{y},\bold{z}\rrbracket \otimes_{k\llbracket \bold{y},\bold{z}\rrbracket} N \cong M \cotimes{k\llbracket \bold{y}\rrbracket} N
\end{equation}
where the completion is with respect to the topology defined by $(\bold{x},\bold{y}) \otimes k\llbracket \bold{y},\bold{z}\rrbracket + k\llbracket \bold{x},\bold{y}\rrbracket \otimes (\bold{y},\bold{z})$.
\end{lemma}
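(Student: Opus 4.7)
My plan is to rewrite both sides of (\ref{eq:two_bifunc_compl}) in terms of tensor products over the uncompleted ring $T := k\llbracket \bold{x},\bold{y}\rrbracket \otimes_{k\llbracket \bold{y}\rrbracket} k\llbracket \bold{y},\bold{z}\rrbracket$ and then invoke the universal property of completion. The preliminary discussion at the start of this section identifies the noetherian complete local ring $T' := k\llbracket \bold{x},\bold{y},\bold{z}\rrbracket$ with the $\mf{m}$-adic completion $\widehat{T}$, where $\mf{m} = (\bold{x},\bold{y})\otimes T + T\otimes(\bold{y},\bold{z})$. This is the essential structural input.

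Pure associativity of tensor product, together with $T' = T \otimes_T T'$ and the evident identification $M \otimes_{k\llbracket \bold{x},\bold{y}\rrbracket} T \otimes_{k\llbracket \bold{y},\bold{z}\rrbracket} N = M \otimes_{k\llbracket \bold{y}\rrbracket} N$, gives
\[
M \otimes_{k\llbracket \bold{x},\bold{y}\rrbracket} T' \otimes_{k\llbracket \bold{y},\bold{z}\rrbracket} N \;=\; \bigl(M \otimes_{k\llbracket \bold{y}\rrbracket} N\bigr) \otimes_T T'.
\]
Setting $P = M \otimes_{k\llbracket \bold{y}\rrbracket} N$, the lemma therefore reduces to producing a natural isomorphism $P \otimes_T T' \cong \widehat{P}$, where $\widehat{P}$ is the $\mf{m}$-adic completion; the right-hand side of (\ref{eq:two_bifunc_compl}) is $\widehat{P}$ by the very definition of $\cotimes{k\llbracket \bold{y}\rrbracket}$.

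To produce this isomorphism, I first observe that a choice of finite generating sets for $M$ over $k\llbracket \bold{x},\bold{y}\rrbracket$ and for $N$ over $k\llbracket \bold{y},\bold{z}\rrbracket$ exhibits $P$ as a finitely generated $T$-module, so $P \otimes_T T'$ is finitely generated over the noetherian complete local ring $T'$ and is hence $(\bold{x},\bold{y},\bold{z})$-adically, equivalently $\mf{m}T'$-adically, complete and separated. The $T$-linear map $P \to P \otimes_T T'$ lands in this $\mf{m}$-adically separated complete target, so by the universal property of completion it factors uniquely through a map $\widehat{P} \to P \otimes_T T'$. Conversely, since $\widehat{P}$ carries a natural $\widehat{T} = T'$-action, the $T$-linear map $P \to \widehat{P}$ induces a $T'$-linear map $P \otimes_T T' \to \widehat{P}$ by the universal property of $\otimes_T$. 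Both composites act as the identity on the image of $P$, and since this image is dense in both complete and separated targets, the two maps are mutual inverses.

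The main subtlety is that the ring $T$ itself may fail to be noetherian, so one cannot quote the textbook assertion ``finitely generated implies $\widehat{P} \cong P \otimes_T \widehat{T}$'' as a black box. My argument sidesteps this by running the universal-property manipulation through the noetherian ring $T'$: noetherianness is invoked only to conclude that the finitely generated $T'$-module $P \otimes_T T'$ is $\mf{m}T'$-adically complete and separated, which is precisely what makes both universal-property arrows well-defined.
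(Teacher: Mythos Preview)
Your argument is correct and takes a genuinely different route from the paper's. The paper writes down the natural comparison map $\gamma$ directly as a map into the inverse limit, checks by inspection that $\gamma$ is an isomorphism when $M$ and $N$ are finite free, and then concludes for arbitrary finitely generated $M,N$ by observing that both sides of (\ref{eq:two_bifunc_compl}) are right exact in each variable (the right-exactness of the completion side ultimately resting on the fact that the relevant inverse systems consist of finite-dimensional $k$-vector spaces, hence are Mittag--Leffler).

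Your approach instead factors everything through the uncompleted ring $T = k\llbracket\bold{x},\bold{y}\rrbracket \otimes_{k\llbracket\bold{y}\rrbracket} k\llbracket\bold{y},\bold{z}\rrbracket$: associativity rewrites the left side as $P \otimes_T T'$ with $P = M \otimes_{k\llbracket\bold{y}\rrbracket} N$, and then you identify $P \otimes_T T'$ with $\widehat{P}$ by a direct universal-property argument, using only that $P \otimes_T T'$ is finitely generated over the noetherian complete ring $T'$. This neatly avoids any appeal to right-exactness of completion over the possibly non-noetherian $T$, a subtlety you explicitly flag. One small point worth making explicit in your density step: to see that the image of $P$ is dense in $P \otimes_T T'$ for the $\mf{m}T'$-adic topology, you need $T \to T'/(\mf{m}T')^j$ to be surjective, which holds here because $T$ already contains every monomial in $\bold{x},\bold{y},\bold{z}$; alternatively, $T'$-linearity of $\alpha\circ\beta$ together with the fact that the $p\otimes 1$ generate $P\otimes_T T'$ over $T'$ gives the same conclusion without invoking density at all. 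Either way the argument goes through.
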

\begin{proof}
There is a natural morphism
\begin{align*}
\gamma: M \otimes_{k\llbracket \bold{x},\bold{y}\rrbracket} k\llbracket \bold{x},\bold{y},\bold{z}\rrbracket \otimes_{k\llbracket \bold{y},\bold{z}\rrbracket} N &= M \otimes_{k\llbracket \bold{x},\bold{y}\rrbracket} \varprojlim_j \left\{ \frac{k\llbracket \bold{x},\bold{y}\rrbracket}{(\bold{x},\bold{y})^j} \otimes_{k\llbracket \bold{y}\rrbracket} \frac{k\llbracket \bold{y},\bold{z}\rrbracket}{(\bold{y},\bold{z})^j}\right\} \otimes_{k\llbracket \bold{y},\bold{z}\rrbracket} N\\
&\lto \varprojlim_j \left\{ \frac{M}{(\bold{x},\bold{y})^jM} \otimes_{k\llbracket \bold{y}\rrbracket} \frac{N}{(\bold{y},\bold{z})^j N}\right\}
\end{align*}
which is an isomorphism if $M, N$ are finite rank free modules over their respective rings. Since both bifunctors in (\ref{eq:two_bifunc_compl}) are right exact in each variable, we infer that $\gamma$ is an isomorphism when both $M$ and $N$ are finitely generated.
\end{proof}

To prove that the convolution represents $\Psi_F \circ \Psi_E$ we need a technical lemma.

\begin{lemma} Consider the diagram of functors
\[
\xymatrix@C+1pc{
\Modd k\llbracket \bold{x},\bold{y}\rrbracket \ar[d]_{\pi_{\bold{x}\bold{y}}^\bullet} \ar[r]^{(\pi_{\bold{y}})_*} & \Modd k\llbracket \bold{y}\rrbracket \ar[d]^{ \pi_{\bold{y}}^\bullet}\\
\Modd k\llbracket \bold{x},\bold{y},\bold{z}\rrbracket \ar[r]_{(\pi_{\bold{y}\bold{z}})_*} & \Modd k\llbracket \bold{y},\bold{z}\rrbracket.
}
\]
There is a natural transformation $\theta: \pi_{\bold{y}}^\bullet \circ (\pi_{\bold{y}})_* \lto (\pi_{\bold{y}\bold{z}})_* \circ \pi_{\bold{x}\bold{y}}^\bullet$ with the property that $\theta_M$ is an isomorphism whenever $M$ is a $k\llbracket \bold{x},\bold{y}\rrbracket$-module separated and complete in its $(\bold{x},\bold{y})$-adic topology.
\end{lemma}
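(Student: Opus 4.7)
The plan is to begin by unpacking what both functors actually do. The $k\llbracket \bold{y},\bold{z}\rrbracket$-module $\pi_{\bold{y}}^\bullet((\pi_{\bold{y}})_* M)$ is, by definition, the $(\bold{y},\bold{z})$-adic completion of the bare tensor product $N := M \otimes_k k\llbracket \bold{z}\rrbracket$ (with $\bold{y}$ acting via the $k\llbracket \bold{y}\rrbracket$-structure coming from $\pi_{\bold{y}}$), while $(\pi_{\bold{y}\bold{z}})_*(\pi_{\bold{x}\bold{y}}^\bullet M)$ is the $(\bold{x},\bold{y},\bold{z})$-adic completion of the same $N$. Since $(\bold{y},\bold{z})^n N \subseteq (\bold{x},\bold{y},\bold{z})^n N$ for every $n$, the identity on $N$ induces a compatible system of surjections $N/(\bold{y},\bold{z})^n N \twoheadrightarrow N/(\bold{x},\bold{y},\bold{z})^n N$ which, on passing to inverse limits, defines $\theta_M$. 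Naturality in $M$ is immediate from this construction.

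For the isomorphism claim I would compare both sides explicitly. Writing $R = k\llbracket \bold{x},\bold{y}\rrbracket$, $R' = k\llbracket \bold{x},\bold{y},\bold{z}\rrbracket$, $S = k\llbracket \bold{y}\rrbracket$, $S' = k\llbracket \bold{y},\bold{z}\rrbracket$, one uses the decompositions $R'/\mathfrak{m}_{R'}^n \cong \bigoplus_{|c|<n} R/\mathfrak{m}_R^{n-|c|}\cdot \bold{z}^c$ and $S'/\mathfrak{m}_{S'}^n \cong \bigoplus_{|c|<n} S/(\bold{y})^{n-|c|}\cdot \bold{z}^c$ as $R$- (respectively $S$-) modules to obtain identifications
\begin{align*}
(\pi_{\bold{y}\bold{z}})_*\pi_{\bold{x}\bold{y}}^\bullet(M) &\cong \varprojlim_n \bigoplus_{|c|<n} (M/\mathfrak{m}_R^{n-|c|} M)\cdot \bold{z}^c,\\
\pi_{\bold{y}}^\bullet(\pi_{\bold{y}})_*(M) &\cong \varprojlim_n \bigoplus_{|c|<n} (M/(\bold{y})^{n-|c|} M)\cdot \bold{z}^c.
\end{align*}
Both inverse limits unfold to ``formal power series in $\bold{z}$ with coefficients in $M$'' — the first with coefficients in $\widehat{M}^{\mathfrak{m}_R}$ and the second with coefficients in $\widehat{M}^{(\bold{y})}$ — and $\theta_M$ is the evident coefficient-wise comparison. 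Thus $\theta_M$ is an isomorphism exactly when $M \to \widehat{M}^{(\bold{y})}$ is, given that $M$ is already $\mathfrak{m}_R$-complete and separated.

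The crux is therefore to prove that a $\mathfrak{m}_R$-complete and $\mathfrak{m}_R$-separated module $M$ is also $(\bold{y})$-complete and separated. Separatedness is trivial: $\bigcap_n (\bold{y})^n M \subseteq \bigcap_n \mathfrak{m}_R^n M = 0$. For completeness, given a compatible family $(x_n) \in \varprojlim_n M/(\bold{y})^n M$, choose lifts $\tilde{x}_n \in M$ with $\tilde{x}_{n+1} - \tilde{x}_n \in (\bold{y})^n M \subseteq \mathfrak{m}_R^n M$; the sequence $(\tilde{x}_n)$ is then $\mathfrak{m}_R$-Cauchy and converges to some $x \in M$. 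The remaining identity $x \equiv x_n \pmod{(\bold{y})^n M}$ follows once one knows that $(\bold{y})^n M$ is closed in the $\mathfrak{m}_R$-adic topology on $M$, since then the limit $x - \tilde{x}_n = \lim_r (\tilde{x}_{n+r}-\tilde{x}_n)$ of elements of $(\bold{y})^n M$ lies in $(\bold{y})^n M$ itself.

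The hard step is this last closedness claim, which is the main obstacle. For finitely generated $M$ it is a direct consequence of the Artin--Rees lemma; for general $\mathfrak{m}_R$-complete $M$, one reduces to the finitely generated case by writing $M = \varprojlim_j M/\mathfrak{m}_R^j M$ and observing that each quotient $M/\mathfrak{m}_R^j M$ is a module over the Artinian ring $R/\mathfrak{m}_R^j$, so Artin--Rees applies level-by-level; the closedness of $(\bold{y})^n M$ in $M$ is then inherited in the inverse limit.
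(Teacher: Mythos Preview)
Your construction of $\theta$ and your identification of both sides as ``power series in $\bold{z}$ with coefficients in a completion of $M$'' match the paper's argument exactly: the paper computes $\pi_{\bold{y}}^\bullet(\pi_{\bold{y}})_*M$ and $(\pi_{\bold{y}\bold{z}})_*\pi_{\bold{x}\bold{y}}^\bullet M$ as $\varprojlim_j$ of the $(\bold{y})$-adic (respectively $(\bold{x},\bold{y})$-adic) completion of $M$ tensored with $k[\bold{z}]/(\bold{z})^j$, takes $\theta$ to be induced by $M/(\bold{y})^jM\to M/(\bold{x},\bold{y})^jM$, and then simply asserts that $\theta$ is an isomorphism when $M$ is $(\bold{x},\bold{y})$-complete and separated. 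You go further than the paper by correctly isolating the content of that assertion: one must show that an $(\bold{x},\bold{y})$-adically complete and separated $M$ is automatically $(\bold{y})$-adically complete.

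Your justification of this last step, however, does not go through. The quotients $M/\mathfrak m_R^{\,j}M$ need not be finitely generated over the Artinian ring $R/\mathfrak m_R^{\,j}$, so Artin--Rees is unavailable; what holds instead is simply that the $\mathfrak m_R$-topology on each quotient is discrete, and then the claim ``closedness of $(\bold{y})^nM$ is inherited in the inverse limit'' is exactly the statement in question and is left unjustified. A direct argument bypasses the closedness formulation: having chosen lifts with $\tilde x_{j+1}-\tilde x_j\in(\bold{y})^jM$, write each difference as $\sum_{|\alpha|=j}\bold{y}^\alpha a_{j,\alpha}$ and, for every $\beta$ with $|\beta|=n$, regroup the tail $x-\tilde x_n=\sum_{j\ge n}(\tilde x_{j+1}-\tilde x_j)$ as $\sum_{|\beta|=n}\bold{y}^\beta b_\beta$, where each $b_\beta$ is the series in the $a_{j,\alpha}$ with coefficients $\bold{y}^{\alpha-\beta}$; these series converge in the $(\bold{x},\bold{y})$-topology since $|\alpha-\beta|=j-n\to\infty$, and continuity of multiplication by $\bold{y}^\beta$ then gives $x-\tilde x_n\in(\bold{y})^nM$. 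In the paper's only application the module $M$ is finitely generated free, where none of this subtlety arises.
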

\begin{proof}
For a $k\llbracket \bold{x},\bold{y}\rrbracket$-module $M$ there is an isomorphism
\begin{align*}
\pi_{\bold{y}}^\bullet (\pi_{\bold{y}})_*(M) &\cong \varprojlim_j \left\{ \frac{M \otimes_k k\llbracket \bold{z}\rrbracket}{(\bold{y})^j \otimes k\llbracket \bold{z}\rrbracket + M \otimes (\bold{z})^j} \right\}\\
&\cong\varprojlim_j \big\{ M/(\bold{y})^j M \otimes_k k[\bold{z}]/(\bold{z})^j \big\}\\
&\cong \varprojlim_j \big\{ ( \varprojlim_\ell M/(\bold{y})^\ell M ) \otimes_k k[\bold{z}]/(\bold{z})^j \big\}
\end{align*}
and similarly
\[
(\pi_{\bold{y}\bold{z}})_* \pi_{\bold{x}\bold{y}}^\bullet(M) \cong \varprojlim_j \big\{ ( \varprojlim_\ell M/(\bold{x},\bold{y})^\ell M ) \otimes_k k[\bold{z}]/(\bold{z})^j \big\},
\]
so the maps $M/(\bold{y})^j M \lto M/(\bold{x},\bold{y})^j M$ give rise to the desired natural transformation $\theta$, which is an isomorphism if $M$ is separated and complete in the adic topology.
\end{proof}

We fix potentials $W \in k\llbracket \bold{x}\rrbracket, V \in k\llbracket \bold{y}\rrbracket$ and $U \in k\llbracket \bold{z}\rrbracket$ in what follows.

\begin{proposition} Given $E \in \hmf( k\llbracket \bold{x},\bold{y}\rrbracket, V - W)$ and $F \in \hmf( k\llbracket \bold{y},\bold{z}\rrbracket, U - V)$ the diagram
\[
\xymatrix@C+1pc{
\hmf(k\llbracket \bold{x}\rrbracket,W) \ar@/_2pc/[rr]_{\Psi_{F \star E}}\ar[r]^{\Psi_E} & \hf(k\llbracket \bold{y}\rrbracket,V) \ar[r]^{\Psi_F} & \hf(k\llbracket \bold{z}\rrbracket,U)
}
\]
commutes up to natural isomorphism.
\end{proposition}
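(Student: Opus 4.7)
The plan is to build a chain of natural isomorphisms from $\Psi_F(\Psi_E(X))$ to $\Psi_{F \star E}(X)$ by unwinding the definitions, applying the natural transformation $\theta$ of the preceding lemma, and using the standard projection formula for restriction and extension of scalars. Throughout I will use that $\pi^\bullet$ agrees with ordinary extension of scalars on finitely presented modules, together with the automatic natural isomorphism $N \otimes_R f_*(M) \cong f_*(f^*N \otimes_S M)$ that holds for any ring map $f: R \to S$, any $R$-module $N$ and any $S$-module $M$.

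First I would unwind $\Psi_F(\Psi_E(X)) = (\pi_{\bold{z}})_*\bigl(F \otimes_{k\llbracket \bold{y},\bold{z}\rrbracket} \pi_{\bold{y}}^\bullet (\pi_{\bold{y}})_*(A)\bigr)$, where $A = E \otimes_{k\llbracket \bold{x},\bold{y}\rrbracket} \pi_{\bold{x}}^\bullet(X)$. Because $E$ and $X$ are both finite rank, $A$ is a finitely generated, and hence separated and complete, $k\llbracket \bold{x},\bold{y}\rrbracket$-module, so the hypothesis of $\theta$ is met and $\theta$ produces a natural isomorphism $\pi_{\bold{y}}^\bullet (\pi_{\bold{y}})_*(A) \cong (\pi_{\bold{y}\bold{z}})_* \pi_{\bold{x}\bold{y}}^\bullet(A)$. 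Since $A$ is finitely presented I can then identify $\pi_{\bold{x}\bold{y}}^\bullet(A) \cong \pi_{\bold{x}\bold{y}}^\bullet(E) \otimes_{k\llbracket \bold{x},\bold{y},\bold{z}\rrbracket} \rho^\bullet(X)$, where $\rho: k\llbracket \bold{x}\rrbracket \to k\llbracket \bold{x},\bold{y},\bold{z}\rrbracket$ is the canonical ring morphism.

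Next I would apply the projection formula to $\pi_{\bold{y}\bold{z}}$ with the finitely presented module $F$ in order to pull the outer tensor with $F$ through $(\pi_{\bold{y}\bold{z}})_*$, and then invoke commutativity of the diagram of ring morphisms to rewrite $(\pi_{\bold{z}})_*(\pi_{\bold{y}\bold{z}})_* = (\pi_{\bold{z}})_*(\pi_{\bold{x}\bold{z}})_*$. A second application of the projection formula, now along $\pi_{\bold{x}\bold{z}}$ to extract the factor $\rho^\bullet(X) \cong \pi_{\bold{x}\bold{z}}^\bullet\,\pi_{\bold{x}}^\bullet(X)$ from inside the pushforward, leaves
\[
(\pi_{\bold{z}})_*\bigl((\pi_{\bold{x}\bold{z}})_*(\pi_{\bold{y}\bold{z}}^\bullet(F) \otimes_{k\llbracket \bold{x},\bold{y},\bold{z}\rrbracket} \pi_{\bold{x}\bold{y}}^\bullet(E)) \otimes_{k\llbracket \bold{x},\bold{z}\rrbracket} \pi_{\bold{x}}^\bullet(X)\bigr),
\]
which by definition of $F \boxtimes E$ and $F \star E$ is precisely $\Psi_{F \star E}(X)$.

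The main obstacle, and the reason this requires more than the formal associativity argument that sufficed in the polynomial case, is the interchange of the completion functor $\pi_{\bold{y}}^\bullet$ with the pushforward $(\pi_{\bold{y}})_*$: this is exactly what the preceding lemma supplies, and its adic-completeness hypothesis is the one place where the finite rank assumption on $E$ is essential. Once that step is in hand the remainder is a careful diagram chase, and naturality in $X$ is automatic because every isomorphism used is natural in its arguments.
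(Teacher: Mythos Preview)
Your proposal is correct and follows essentially the same route as the paper's proof: unwind the definitions, apply the preceding lemma's natural transformation $\theta$ to the complete module $E \otimes \pi_{\bold{x}}^* X$, replace $\pi^\bullet$ by $\pi^*$ on finitely presented modules, and then use the projection formula together with the equality $(\pi_{\bold{z}})_*(\pi_{\bold{y}\bold{z}})_* = (\pi_{\bold{z}})_*(\pi_{\bold{x}\bold{z}})_*$ to reassemble everything as $\Psi_{F\star E}(X)$. The paper presents this as a single chain of five natural isomorphisms, but the content and order of the manipulations are the same as yours.
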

\begin{proof}
For $X \in \hmf(k\llbracket \bold{x}\rrbracket, W)$ we have by the lemma a chain of natural isomorphisms
\begin{align*}
(\Psi_F \circ \Psi_E)(X) &\cong (\pi_{\bold{z}})_*\left(F \otimes \pi_{\bold{y}}^\bullet (\pi_{\bold{y}})_*(E \otimes \pi_{\bold{x}}^* X)\right)\\
&\cong (\pi_{\bold{z}})_*\left(F \otimes (\pi_{\bold{y}\bold{z}})_*\pi_{\bold{x}\bold{y}}^\bullet(E \otimes \pi_{\bold{x}}^* X)\right)\\
&\cong (\pi_{\bold{z}})_*\left(F \otimes (\pi_{\bold{y}\bold{z}})_*\pi_{\bold{x}\bold{y}}^*(E \otimes \pi_{\bold{x}}^* X)\right)\\
&\cong (\pi_{\bold{z}})_*\left( \pi_{\bold{y}\bold{z}}^* F \otimes \pi_{\bold{x}\bold{y}}^* E \otimes \pi_{\bold{x}}^* X \right)\\
&\cong (\pi_{\bold{z}})_*( (\pi_{\bold{x}\bold{z}})_*( F \boxtimes E ) \otimes \pi_{\bold{x}}^* X )\\
&= \Psi_{F \star E}(X),
\end{align*}
as claimed.
\end{proof}

We conclude by describing $F \star E$ as the splitting of an explicit idempotent. We keep the notation of the previous proposition, so that $E,F$ are finite rank kernels. We also write $\bold{t} = \{ \partial_{y_1} V, \ldots, \partial_{y_m} V \}$ and $J_V = k\llbracket \bold{y}\rrbracket/\bold{t} k\llbracket \bold{y}\rrbracket$.

\begin{theorem} Suppose that $\dim_k J_V < \infty$. Then there is a diagram
\begin{equation}\label{eq:idempotent_formula_diagram_power}
\xymatrix@+3pc{
F/\bold{t} F \cotimes{k\llbracket \bold{y}\rrbracket} E/\bold{t} E \ar@<-0.8ex>[r]_(0.6){\psi} & (F \star E)[m] \ar@<-0.8ex>[l]_(0.4){\vartheta}
}
\end{equation}
in $\HMF(k\llbracket \bold{x},\bold{z}\rrbracket, U - W)$, where $\psi \circ \vartheta = 1$ and the idempotent $e = \vartheta \circ \psi$ is given by the formula
\begin{equation}\label{eq:idempotent_formula_last_power}
e = \frac{1}{m!} (-1)^{\binom{m+1}{2}} \cdot \partial_{y_1}(d_E) \cdots \partial_{y_m}(d_E) \cdot \At(F \boxtimes E)^m
\end{equation}
where the Atiyah class is relative to $k\llbracket \bold{x},\bold{z}\rrbracket[\bold{t}]/k\llbracket \bold{x},\bold{z}\rrbracket$.
\end{theorem}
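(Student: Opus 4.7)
The plan is to deduce this from the main theorem (Theorem \ref{theorem:main}) by taking $S = k\llbracket\bold{x},\bold{z}\rrbracket$, $R = k\llbracket\bold{x},\bold{y},\bold{z}\rrbracket$, $\varphi = \pi_{\bold{x}\bold{z}}$, potential $U - W \in S$, matrix factorisation $X = F \boxtimes E$, and null-homotopies $\lambda_i = 1 \otimes \partial_{y_i}(d_E)$ on $X$ for the sequence $\bold{t} = \{\partial_{y_1}V,\ldots,\partial_{y_m}V\}$. By Lemma \ref{lemma:two_bifunc_completions} the external tensor product agrees with $F \cotimes{k\llbracket \bold{y}\rrbracket} E$, and since both $E$ and $F$ are finite rank this is a finite rank matrix factorisation of $U - W$ over $R$. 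The rest of the argument is essentially the one used in Theorem \ref{theorem:defectfusion}, with the simplification that we are already working over complete rings and therefore do not have to invoke Remark \ref{remark:completionandpush}.

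First I would verify the four hypotheses of Theorem \ref{theorem:main} in turn. The sequence $\bold{t}$ is regular in $k\llbracket \bold{y}\rrbracket$ because $\dim_k J_V < \infty$, and remains quasi-regular after the flat base change to $R$; the quotient $R/\bold{t}R \cong J_V \otimes_k k\llbracket \bold{x},\bold{z}\rrbracket$ is free of finite rank over $S$. The null-homotopy property for each $\partial_{y_i}V$ is immediate from differentiating the identity $d_E^2 = V - W$ with respect to $y_i$, which yields $\partial_{y_i}(d_E) \cdot d_E + d_E \cdot \partial_{y_i}(d_E) = \partial_{y_i}V \cdot 1_E$, hence $\lambda_i = 1 \otimes \partial_{y_i}(d_E)$ is a null-homotopy on $F \boxtimes E$ for the action of $\partial_{y_i}V$. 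Finally, Appendix \ref{section:derhamsplit} produces a standard flat $S$-linear connection on $R$ as an $S[\bold{t}]$-module, provided $R$ is separated and complete in the $\bold{t}R$-adic topology.

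The one subtle point, which I expect to be the main obstacle, is this last completeness claim. The observation is that, because $V \in (\bold{y})^2$ defines an isolated singularity, we have $\sqrt{\bold{t}k\llbracket\bold{y}\rrbracket} = (\bold{y})$, so the $\bold{t}$-adic and $(\bold{y})$-adic topologies on $k\llbracket\bold{y}\rrbracket$ coincide; tensoring the latter with $k\llbracket\bold{x},\bold{z}\rrbracket$ and completing yields exactly $R$. Hence the $\bold{t}R$-adic topology on $R$ coincides with the topology defined by $(\bold{y})R$, in which $R$ is manifestly complete. Once this is verified, Theorem \ref{theorem:main} applies and produces the diagram (\ref{eq:idempotent_formula_diagram_power}) with $\psi \circ \vartheta = 1$ and idempotent
\[
e = \frac{1}{m!}(-1)^{\binom{m+1}{2}} \lambda_1 \cdots \lambda_m \, \At_{S[\bold{t}]/S}(F \boxtimes E)^m,
\]
which is precisely the formula (\ref{eq:idempotent_formula_last_power}) after substituting $\lambda_i = \partial_{y_i}(d_E)$.
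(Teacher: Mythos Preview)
Your proposal is correct and follows essentially the same route as the paper: apply Theorem \ref{theorem:main} to $\varphi = \pi_{\bold{x}\bold{z}}: k\llbracket\bold{x},\bold{z}\rrbracket \to k\llbracket\bold{x},\bold{y},\bold{z}\rrbracket$ with $X = F \boxtimes E$, the null-homotopies $\lambda_i = 1 \otimes \partial_{y_i}(d_E)$, and the connection furnished by Appendix \ref{section:derhamsplit}, then use Lemma \ref{lemma:two_bifunc_completions} to identify $(F \boxtimes E)\otimes_{k\llbracket\bold{y}\rrbracket} J_V$ with $F/\bold{t}F \cotimes{k\llbracket\bold{y}\rrbracket} E/\bold{t}E$. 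Your verification that $R$ is $\bold{t}R$-adically complete (via $\sqrt{\bold{t}R} = (\bold{y})R$) makes explicit a point the paper leaves to the reader when it invokes Appendix \ref{section:derhamsplit}.
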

\begin{proof}
Observe that
\[
k\llbracket \bold{x},\bold{y},\bold{z}\rrbracket/\bold{t} k\llbracket \bold{x},\bold{y},\bold{z}\rrbracket \cong k\llbracket \bold{x},\bold{z}\rrbracket \otimes_k J_V
\]
is a finite rank free $k\llbracket \bold{x},\bold{z}\rrbracket$-module, each $\partial_{y_i} V$ acts null-homotopically on $F \boxtimes E$ with null-homotopy $\lambda_i = 1 \otimes \partial_{y_i}(d_E)$, and $k\llbracket \bold{x},\bold{y},\bold{z}\rrbracket$ admits a flat $k\llbracket \bold{x},\bold{z}\rrbracket$-linear connection as a $k\llbracket \bold{x},\bold{z}\rrbracket[\bold{t}]$-module by Appendix \ref{section:derhamsplit}. We can therefore apply Theorem \ref{theorem:main} to the morphism $\pi_{\bold{x}\bold{z}}: k\llbracket \bold{x},\bold{z}\rrbracket \lto k\llbracket \bold{x},\bold{y},\bold{z}\rrbracket$, the matrix factorisation $F \boxtimes E$ and the regular sequence $\bold{t}$. This gives us the diagram (\ref{eq:idempotent_formula_diagram_power}) and associated idempotent $e$, but defined on $(F \boxtimes E) \otimes_{k\llbracket \bold{y}\rrbracket} J_V$. To conclude the proof we recall that by Lemma \ref{lemma:two_bifunc_completions} this factorisation is isomorphic to $F/\bold{t} F \cotimes{k\llbracket \bold{y}\rrbracket} E/\bold{t} E$.
\end{proof}

\begin{remark} Suppose that $J_U, J_V$ and $J_W$ are finite-dimensional over $k$. Then $\hmf(k\llbracket \bold{x},\bold{z}\rrbracket, U - W)$ is idempotent complete and it follows from the theorem that $F \star E$ is homotopy equivalent to a finite rank matrix factorisation. We deduce a residue formula for the Chern character of the convolution, identical to the formula given in Theorem \ref{corollary:chern_char_convolution}. If we take $\bold{x}$ to be empty and $W = 0$, we learn that the integral functor $\Phi_F$ restricts to give a functor $\hmf(k\llbracket \bold{y}\rrbracket, V) \lto \hmf(k\llbracket \bold{z}\rrbracket, U)$ on the full subcategories of finite rank factorisations.

The fact that $F \star E$ is homotopy equivalent to a finite rank matrix factorisation is well-known both in the physics literature \cite[\S 4.2]{brunnerdefect} and in the mathematics literature, see either \cite[Prop. 13]{khovanov} or combine the results of \cite{dyck4} with \cite[Lemma 2.8]{toen.morita}. But in both cases the arguments are indirect; our presentation of $F \star E$ in terms of the finite rank matrix factorisation $F/\bold{t} F \otimes_{k[\bold{y}]} E/\bold{t} E$ with an explicit idempotent seems to be new.
\end{remark}

\section{Kn\"orrer periodicity}\label{section:knorrer}

A classical theorem of Kn\" orrer \cite{knorrer} states that for an algebraically closed field $k$ of characteristic zero, and a potential $W \in k\llbracket \bold{x}\rrbracket = k\llbracket x_1,\ldots,x_n\rrbracket$ with an isolated singularity, there is an equivalence
\begin{equation}\label{eq:knorrerequiv}
\Phi: \hmf(k\llbracket \bold{x}\rrbracket, W) \xlto{\cong} \hmf(k\llbracket \bold{x},u,v\rrbracket, W + uv)
\end{equation}
This phenomenon is known as \emph{Kn\"orrer periodicity}, and was generalised to schemes by Orlov \cite{Orlov04}. The functor $\Phi$ is defined by tensoring with the Koszul factorisation of $uv$, but the inverse $\Psi$ is more subtle and involves a pushforward. In order to have a concrete example of our main theorem, we construct $\Psi$ and define a natural isomorphism $\kappa: \Psi \circ \Phi \lto 1$ in terms of residues; this is in contrast to \cite{knorrer} which does not directly construct the inverse. The same construction of $\Psi$ appears in the work of Herbst-Hori-Page \cite[\S 3.4]{herbsthoripage}, but there the disposal of contractible summands in the pushforward is done in an \emph{ad hoc} fashion, whereas in our case $\kappa$ takes care of this canonically.

Let $S$ be a $\mathbb{Q}$-algebra and $W \in S$ a potential. Consider the Koszul matrix factorisation $K$ of $uv$ over $R = S\llbracket u,v\rrbracket$ with differential
\[
d_K = \begin{pmatrix} 0 & u \\ v & 0 \end{pmatrix}, \text{ and its dual } d_{K^{\lor}} = \begin{pmatrix} 0 & v \\ -u & 0 \end{pmatrix}.
\]
The tensor product defines a functor
\[
\Phi: \hmf(S,W) \lto \hmf(S\llbracket u,v\rrbracket, W + uv), \quad \Phi(X) = X \otimes_S K.
\]
We are interested in the inverse construction: given a finite rank matrix factorisation $Y$ of $W + uv$ over $S\llbracket u,v\rrbracket$ we tensor $Y$ with $K^{\lor}$ to yield a finite rank matrix factorisation of $W$, which we then pushforward along the ring morphism $\varphi: S \lto S\llbracket u,v\rrbracket$, to define the functor
\[
\Psi: \hmf(S\llbracket u,v\rrbracket, W + uv) \lto \HMF(S, W), \quad \Psi(Y) = \varphi_*(Y \otimes K^{\lor}).
\]
Now $\bold{t} = \{ u, v \}$ is a regular sequence in $S\llbracket u,v\rrbracket$ and $\nabla^0(r) = \partial_u(r) \otimes \ud u + \partial_v( r ) \otimes \ud v$ is a standard flat $S$-linear connection on $S\llbracket u,v \rrbracket$. Since in this case $R/\bold{t} R = S$ we may apply Theorem \ref{theorem:main} to see that in $\HMF(S,W)$ there is a diagram
\[
\xymatrix@+3pc{
(Y \otimes K^{\lor})|_{u=v=0} \ar@<-0.8ex>[r]_(0.6){\psi} & \Psi(Y) \ar@<-0.8ex>[l]_(0.4){\vartheta}
}
\]
where $(-)|_{u=v=0}$ denotes the functor $(-) \otimes_{S\llbracket u,v\rrbracket} S$, we have $\psi \circ \vartheta = 1$ and
\begin{align*}
e = \vartheta \circ \psi &= -\frac{1}{2} \partial_{v}(d_{K^{\lor}})\partial_u(d_{K^{\lor}}) \At_{S[u,v]/S}(Y \otimes K^{\lor})^2.
\end{align*}
Here we have used $d_{K^{\lor}}^2 = -uv$ and the Leibniz rule to produce null-homotopies $\lambda_1 = - 1 \otimes \partial_v(d_{K^{\lor}})$ and $\lambda_2 = -1 \otimes \partial_u(d_{K^{\lor}})$ for the action of $u$ and $v$. We have also implicitly fixed a homogeneous basis of $Y$ and $K$. As operators on $(Y \otimes K^{\lor})|_{u=v=0}$ we have
\begin{align*}
[\partial_u, d_{Y \otimes K^{\lor}}] &= \partial_u(d_Y)|_{u=v=0} \otimes 1_{K^{\lor}} + 1 \otimes \begin{pmatrix} 0 & 0 \\ - 1 & 0 \end{pmatrix}\\
[\partial_v, d_{Y \otimes K^{\lor}}] &= \partial_v(d_Y)|_{u=v=0} \otimes 1_{K^{\lor}} + 1 \otimes \begin{pmatrix} 0 & 1 \\ 0 & 0 \end{pmatrix}.
\end{align*}
Combining this with the simplified form of $e$ in Corollary \ref{cor:simplified_e} we find that
\[
e = \frac{1}{2} [ \partial_v(d_Y)|_{u=v=0}, \partial_u(d_Y)|_{u=v=0} ] \otimes \begin{pmatrix} -1 & 0 \\ 0 & 0 \end{pmatrix} + \frac{1}{2}1_Y \otimes \begin{pmatrix} 1 & 0 \\ 0 & 0 \end{pmatrix} - \partial_u(d_Y)|_{u=v=0} \otimes \begin{pmatrix} 0 & -1 \\ 0 & 0 \end{pmatrix}.
\]
So the pushforward $\varphi_*(Y \otimes K^{\lor})$ is the splitting of this idempotent in $\HMF(S,W)$. To prove that this construction gives a left inverse to $\Phi$, suppose that $Y = \Phi(X)$ for some $X \in \hmf(S,W)$. Then
\[
\partial_u(d_Y) = 1_X \otimes \begin{pmatrix} 0 & 1 \\ 0 & 0 \end{pmatrix}, \quad \partial_v(d_Y) = 1_X \otimes \begin{pmatrix} 0 & 0 \\ 1 & 0 \end{pmatrix},
\]
and substituting this in the above one finds that, as an endomorphism of $( X \otimes K \otimes K^{\lor} )|_{u=v=0}$,
\[
e = 1_X \otimes \left(\begin{smallmatrix} 1 & 0 & 0 &1 \\ 0 & 0 & 0 &0 \\ 0 & 0 &0 &0\\ 0 &0 &0 &0 \end{smallmatrix}\right)
\]
the splitting of which is obviously $X$, via the maps
\[
\xymatrix@+5pc{
(X \otimes K \otimes K^{\lor})|_{u=v=0} \ar@<-0.8ex>[r]_(0.6){f = \left(\begin{smallmatrix} 1 & 0 & 0 & 1 \end{smallmatrix}\right)} & X \ar@<-0.8ex>[l]_(0.4){g = \left(\begin{smallmatrix} 1 & 0 & 0 & 0 \end{smallmatrix}\right)^t}.
}
\]
Hence $\Psi \Phi(X) \cong X$. Note that this calculation serves as a good check on the signs and normalisations on $e$ in the main theorem, and that by composing $f$ with $\vartheta$ we have an explicit homotopy equivalence between $X$ and $\Psi \Phi(X)$ in $\HMF(S,W)$. To present this it is best to use the canonical isomorphism
\[
\Psi \Phi(X) = X \otimes K \otimes K^{\lor} \cong X \otimes K^{\lor} \otimes K \cong X \otimes \shom(K,K).
\]
Under this isomorphism $f$ corresponds to the supertrace, so taking $\kappa = f \circ \vartheta$ we have

\begin{proposition} There is a natural isomorphism in $\HMF(S,W)$
\[
\kappa: \Psi \Phi(X) = X \otimes \shom(K,K) \lto X,
\]
given for $x \in X$ and $\alpha \in \shom(K,K)$ homogeneous by
\[
\kappa( x \otimes \alpha ) = \Ress{S\llbracket u,v\rrbracket/S} \begin{bmatrix} \str( \alpha \cdot \partial_v(d_K) \partial_u(d_K) ) \cdot \ud u \wedge \ud v \\ u,v \end{bmatrix} \cdot x.
\]
\end{proposition}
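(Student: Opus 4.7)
\emph{Proposal.} The plan is to specialise Theorem~\ref{theorem:main} to the ring morphism $\varphi: S \to S\llbracket u,v\rrbracket$ with quasi-regular sequence $\bold{t} = \{u,v\}$, and then compute the composite $f \circ \vartheta$ concretely on $X \otimes K \otimes K^{\lor}|_{u=v=0}$. The preceding calculation in the section already does most of the work: for $Y = \Phi(X)$ the idempotent $e$ has been displayed explicitly, it splits through $X$ via the matrices $f$ and $g$, and under the canonical identification $K \otimes K^{\lor} \cong \shom(K,K)$ the retraction $f$ becomes $x \otimes \alpha \mapsto \str(\alpha) \cdot x$, since $f = (1,0,0,1)$ is precisely the supertrace.

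Next I would invoke the formula $\vartheta = (-1)^n \lambda_1 \cdots \lambda_n$ from Theorem~\ref{theorem:first_attempt_e}: with the null-homotopies $\lambda_1 = -\partial_v(d_{K^{\lor}})$ and $\lambda_2 = -\partial_u(d_{K^{\lor}})$ forced by $d_{K^{\lor}}^{\,2} = -uv$, one obtains $\vartheta = \partial_v(d_{K^{\lor}})\partial_u(d_{K^{\lor}})$ acting on the $K^{\lor}$ factor of $X \otimes K \otimes K^{\lor}$. Composing this with the supertrace and passing through the canonical isomorphism $K \otimes K^{\lor} \cong \shom(K,K)$ (which converts differentiation of $d_{K^{\lor}}$ to differentiation of $d_K$, modulo signs that cancel with those of the supertrace) gives
\[
\kappa(x \otimes \alpha) = \str\bigl(\alpha \cdot \partial_v(d_K)\partial_u(d_K)\bigr)\Big|_{u=v=0} \cdot x.
\]

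Finally, I would recognise the constant term of an element of $S\llbracket u,v\rrbracket$ as a residue. With the standard $S$-linear connection $\nabla^0(r) = \partial_u(r)\,\ud u + \partial_v(r)\,\ud v$ on $S\llbracket u,v\rrbracket$, Definition~\ref{defn:residuesymbol} directly gives
\[
\Ress{S\llbracket u,v\rrbracket/S}\!\begin{bmatrix} g \cdot \ud u \wedge \ud v \\ u, v \end{bmatrix} = g(0,0)
\]
for $g \in S\llbracket u,v\rrbracket$, which matches the expression in the statement. Naturality in $X$ is immediate from Lemma~\ref{lemma:nate} applied to the construction of $\vartheta$. The main delicate point is the Koszul-sign bookkeeping (coming from the dual factorisation, the swap $K \otimes K^{\lor} \simeq K^{\lor} \otimes K$, and the $\tfrac{1}{n!}$ normalisation in Theorem~\ref{theorem:main}); these are already correctly absorbed into the preceding verification that $\Psi\Phi(X) \cong X$, so translating that equivalence into the stated residue formula is essentially a matter of reading off signs rather than a substantive obstacle.
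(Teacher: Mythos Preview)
Your proposal is correct and follows essentially the same approach as the paper: the proposition is stated as the conclusion of the preceding calculation, and your plan---take $\kappa = f \circ \vartheta$, identify $f$ with the supertrace under $K \otimes K^{\lor} \cong \shom(K,K)$, compute $\vartheta = \partial_v(d_{K^{\lor}})\partial_u(d_{K^{\lor}})$ from the null-homotopies, and rewrite evaluation at $u=v=0$ as the residue $\Ress{S\llbracket u,v\rrbracket/S}[\,-\,/u,v]$---is exactly what the paper does. One small clarification: the $\tfrac{1}{n!}$ lives in $e$ (equivalently in $\psi$), not in $\vartheta$, so it plays no role once you work directly with $f \circ \vartheta$.
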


\appendix

\section{Properties of the Chern character}\label{section:ordering_chernchar}

Let $k$ be a field of characteristic zero and set $R = k\llbracket x_1,\ldots,x_n\rrbracket$. Denoting by $\mf{m}$ the maximal~ideal in $R$, let $W \in \mf{m}^2$ be a polynomial defining an isolated singularity, by which we mean that the Jacobi algebra $J_W = R/(\partial_{x_1} W, \ldots, \partial_{x_n} W)$ is finite-dimensional. Recall that $J_W$ is a Frobenius algebra with the nondegenerate residue pairing of local duality
\[
\langle f, g \rangle = \Res \Big[ \begin{matrix} f g \cdot \ud \bold{x} \;/\; \partial_1 W,\ldots,\partial_n W \end{matrix} \Big]
\]
where we write $\ud \bold{x}$ for $\ud x_1 \wedge \cdots \wedge \ud x_n$ and $\partial_i = \partial/\partial x_i$. Let $(X,d_X)$ be a finite rank matrix factorisation of $W$ over $R$. The Chern character of $X$ is the supertrace
\begin{equation}\label{eq:chern_char_formula}
\ch(X) = (-1)^{\binom{n}{2}} \str( \partial_1 d_X \cdots \partial_n d_X )
\end{equation}
considered as an element of $J_W$. If $n$ is odd then this supertrace is zero and all the statements below are trivial, so let us assume that $n$ is even. We begin by clarifying the role of the odd matrices $\partial_i d_X$. Recall from Remark \ref{remark:partial_gives_homotopy} that these act as null-homotopies for the action of the $\partial_i W$, i.e.
\begin{equation}\label{eq:app_chern_char_partials}
\partial_i(d_X) \cdot d_X + d_X \cdot \partial_i(d_X) = \partial_i W \cdot 1_X.
\end{equation}
In fact any family of null-homotopies will serve just as well in the definition of the Chern character.

\begin{lemma}\label{lemma:indept_homotopy_chern} Let $\{ \lambda_i \}_{1\le i \le n}$ be a family of $R$-linear homotopies on $X$ with $d_X \cdot \lambda_i + \lambda_i \cdot d_X = \partial_i W \cdot 1_X$ for $1 \le i \le n$. Then we have $\str( \lambda_1 \cdots \lambda_n ) = \str( \partial_1 d_X \cdots \partial_n d_X )$ as elements of $J_W$.
\end{lemma}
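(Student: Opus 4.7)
My plan is to reduce the statement to a computation involving the Atiyah class together with an antisymmetrisation identity in $J_W$. Both supertraces automatically vanish when $n$ is odd (supertrace of an odd supermatrix is zero), so assume $n$ is even. Set $\mu_i := \lambda_i - \partial_i d_X$; since both $\lambda_i$ and $\partial_i d_X$ are $R$-linear null-homotopies of $\partial_i W$, the difference $\mu_i$ is an odd cocycle in $\End_R(X)$, i.e.\ $\{d_X, \mu_i\} = 0$. Expanding $\prod_i(\partial_i d_X + \mu_i)$ and using multilinearity of $\str$ gives
\[
\str(\lambda_1 \cdots \lambda_n) - \str(\partial_1 d_X \cdots \partial_n d_X) = \sum_{\emptyset \neq S \subseteq \{1,\ldots,n\}} \str(A_S),
\]
where $A_S$ has $\mu_i$ in positions $i \in S$ and $\partial_i d_X$ elsewhere. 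It therefore suffices to show each $\str(A_S) \in I := (\partial_1 W, \ldots, \partial_n W)$, and a telescoping induction on $|S|$ reduces us to the following key claim: for every odd cocycle $\mu$ and arbitrary $R$-linear null-homotopies $\lambda_j$ of $\partial_j W$, one has $\str(\lambda_1 \cdots \lambda_{i-1}\,\mu\,\lambda_{i+1}\cdots\lambda_n) \in I$.

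The coboundary subcase $\mu = [d_X, h]$ for some even $h$ is immediate: applying the cyclic-invariance identity $\str \circ [d_X,-] = 0$ to $\lambda_1 \cdots h \cdots \lambda_n$ and expanding via super-Leibniz, every summand apart from the one landing the differential onto $h$ carries a factor $w_j = [d_X, \lambda_j]$ that can be pulled outside the supertrace, forcing the remaining term itself to lie in $I$. For a general odd cocycle $\mu$ (which need not be a coboundary) the plan is to leverage the intrinsic nature of the Atiyah class. With the standard flat $k$-linear connection $\nabla^0(r\xi_i) = \xi_i \otimes \ud r$ on $X$, one computes $\At_{R/k}(X) = [d_X, \nabla^0]$ with matrix $-\sum_j \partial_j(d_X)\,\ud x_j$; trivialising $\Omega^n_{R/k}$ by $\ud x_1 \wedge \cdots \wedge \ud x_n$ gives
\[
\At_{R/k}(X)^n = (-1)^n \sum_{\tau\in S_n}\sgn(\tau)\,\partial_{\tau(1)}d_X\cdots\partial_{\tau(n)}d_X.
\]
Since $\At^n_{R/k}(X)$ is independent of the choice of connection up to $[d_X,-]$-homotopy and $\str$ vanishes on coboundaries, the scalar $\str(\At^n_{R/k}(X)) \in R$ is intrinsic to $X$.

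The next step is an antisymmetrisation identity in the spirit of Lemma \ref{eq:lemmaordchern}, proved for any null-homotopies: $\str(\lambda_{\tau(1)}\cdots\lambda_{\tau(n)}) \equiv \sgn(\tau)\,\str(\lambda_1\cdots\lambda_n) \pmod I$. Summing over $\tau$ produces
\[
\str(\At^n_{R/k}(X)) \equiv (-1)^n n!\,\str(\lambda_1\cdots\lambda_n) \pmod I,
\]
and the same identity specialised at $\lambda_i = \partial_i d_X$ gives $\str(\At^n_{R/k}(X)) \equiv (-1)^n n!\,\str(\partial_1 d_X\cdots\partial_n d_X) \pmod I$. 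Dividing by $(-1)^n n!$ (legal in characteristic zero) yields the desired equality in $J_W$.

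The main obstacle is establishing the antisymmetrisation identity for an arbitrary family of null-homotopies. The anticommutator $\{\lambda_i, \lambda_j\}$ is always a cocycle, as can be checked by applying super-Jacobi to $\{d_X, \lambda_i\} = w_i$, and the identity reduces to showing that $\{\lambda_i, \lambda_j\}$ (inserted into an otherwise benign product) gives supertrace in $I$ modulo coboundaries. For the specific choice $\lambda_k = \partial_k d_X$ one has the explicit formula $\{\partial_i d_X, \partial_j d_X\} = \partial_i\partial_j W \cdot 1_X - [d_X, \partial_i\partial_j d_X]$, where the Hessian scalar is absorbed using $\str(1_X) = 0$ for balanced matrix factorisations; handling a generic pair of null-homotopies requires writing each as $\partial_k d_X$ plus a cocycle and then re-applying the single-cocycle reduction, so the antisymmetrisation lemma and the key claim must be proved together by a simultaneous induction on the number of variables.
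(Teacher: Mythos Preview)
Your proposal has a genuine gap. The Atiyah-class detour does not connect $\str(\At_{R/k}(X)^n)$ to $\str(\lambda_1\cdots\lambda_n)$ for \emph{arbitrary} null-homotopies $\lambda_i$: changing the connection $\nabla^0$ only perturbs the Atiyah class by a $[d_X,-]$-coboundary, so the entries of $\At_{R/k}(X)$ differ from $-\partial_i d_X$ only by coboundaries, never by an arbitrary odd cocycle $\mu_i$. Thus the intrinsic nature of $\str(\At^n)$ gives you nothing beyond the coboundary subcase you already handled. Your closing sentence --- that the key claim and the antisymmetrisation identity ``must be proved together by a simultaneous induction on the number of variables'' --- is precisely where the real work would have to happen, and it is not carried out; worse, each step of the induction you sketch still invokes the key claim for a general (non-coboundary) cocycle, so the circularity is not broken. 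The auxiliary facts you would need along the way (antisymmetrisation, vanishing of non-full products, absorption of the Hessian term) are themselves the content of the paper's Lemmas~\ref{eq:lemmaordchern} and~\ref{lemma:nonfullstrvanishes}, all of which are proved by the same technique as the present lemma.

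The paper's argument avoids this entirely by working directly with the nondegenerate residue pairing on $J_W$. To swap $\partial_1 d_X$ for $\lambda_1$ (the induction step), pair against an arbitrary $g$, pass from denominator $\partial_1 W,\partial_2 W,\ldots$ to $(\partial_1 W)^2,\partial_2 W,\ldots$ by inserting a factor $\partial_1 W$ in the numerator, and then replace that factor by $\lambda_1 d_X + d_X\lambda_1$. Inside the residue $d_X$ anticommutes with each $\partial_j d_X$ for $j\neq 1$ (their anticommutator $\partial_j W$ is killed by the denominator), so the two $d_X$'s can be cycled around the supertrace and reassembled as $d_X\,\partial_1 d_X + \partial_1 d_X\, d_X = \partial_1 W$, cancelling the extra denominator power and leaving $\lambda_1$ in place of $\partial_1 d_X$. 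This single residue manipulation replaces your whole cocycle/coboundary analysis and makes no appeal to Atiyah classes or antisymmetrisation.
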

\begin{proof}
By induction it suffices to prove that interchanging $\partial_1 d_X$ and $\lambda_1$ does not change the equivalence class of the supertrace. We do this by proving that both supertraces have the same pairing with every $g \in J_W$, so that nondegeneracy of the pairing gives the desired result.

The denominators of our residues will always be either $\partial_1 W$ or $(\partial_1 W)^2$ followed by $\partial_2 W, \ldots, \partial_n W$, and we drop the latter $n-1$ terms from the notation. If some $\partial_i W$ appears in the denominator of a residue then we can apply (\ref{eq:app_chern_char_partials}) to see that $d_X$ and $\partial_i d_X$ anticommute inside the residue. Hence
\begin{align*}
\langle \str( \partial_1 d_X \cdots \partial_n d_X ), g \rangle &= \Res \Big[ \begin{matrix} g \cdot \str( \partial_1 d_X \cdots \partial_n d_X ) \cdot \ud \bold{x} \;/\; \partial_1 W,\ldots \end{matrix} \Big]\\
&= \Res \Big[ \begin{matrix} g \cdot \str( \partial_1 W \cdot \partial_1 d_X \cdots \partial_n d_X ) \cdot \ud \bold{x} \;/\; (\partial_1 W)^2, \ldots \end{matrix} \Big]\\
&= \Res \Big[ \begin{matrix} g \cdot \str( (\lambda_1 d_X + d_X \lambda_1 ) \partial_1 d_X \cdots \partial_n d_X ) \cdot \ud \bold{x} \;/\; (\partial_1 W)^2, \ldots \end{matrix} \Big]
\end{align*}
and after anticommuting the $d_X$ in $d_X \lambda_1$ around the supertrace and through each $\partial_i d_X$ with $i \neq 1$,
\begin{align*}
\quad &= \Res \Big[ \begin{matrix} g \cdot \str( \lambda_1 \cdot d_X \cdot \partial_1 d_X \cdot \partial_2 d_X \cdots \partial_n d_X ) \cdot \ud \bold{x} \;/\; (\partial_1 W)^2, \ldots \end{matrix} \Big]\\
&+ (-1)^n \Res \Big[ \begin{matrix} g \cdot \str( \lambda_1 \cdot \partial_1 d_X \cdot d_X \cdot \partial_2 d_X \cdots \partial_n d_X ) \cdot \ud \bold{x} \;/\; (\partial_1 W)^2, \ldots \end{matrix} \Big]\\
&= \Res \Big[ \begin{matrix} g \cdot \str( \lambda_1 ( d_X \cdot \partial_1 d_X + \partial_1 d_X \cdot d_X ) \partial_2 d_X \cdots \partial_n d_X ) \cdot \ud \bold{x} \;/\; (\partial_1 W)^2, \ldots \end{matrix} \Big]\\
&= \Res \Big[ \begin{matrix} g \cdot \str( \lambda_1 \partial_2 d_X \cdots \partial_n d_X ) \cdot \ud \bold{x} \;/\; \partial_1 W, \ldots \end{matrix} \Big]\\
&= \langle \str( \lambda_1 \partial_2 d_X \cdots \partial_n d_X ), g \rangle
\end{align*}
which is what we needed to show.
\end{proof}

From now on $\{ \lambda_i \}_{1\le i \le n}$ denotes a family of $R$-linear homotopies with $d_X \cdot \lambda_i + \lambda_i \cdot d_X = \partial_i W \cdot 1_X$ for $1 \le i \le n$. It follows indirectly from the derivations of the Chern character in \cite{polishchuk} and \cite{dyckmurf} that the matrices $\partial_i d_X$ anticommute in (\ref{eq:chern_char_formula}). Let us give a more direct proof.

\begin{lemma}\label{eq:lemmaordchern} For $\sigma \in S_n$ we have
\[
\str( \lambda_{\sigma(1)} \cdots \lambda_{\sigma(n)} ) = \sgn(\sigma) \cdot \str( \lambda_1 \cdots \lambda_n )
\]
as elements of $J_W$.
\end{lemma}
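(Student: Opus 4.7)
The plan is to first reduce to the case $\lambda_i = \partial_i d_X$ for all $i$, where I abbreviate $\partial_i = \partial_{x_i}$. Since at each position $k$ both $\lambda_{\sigma(k)}$ and $\partial_{\sigma(k)} d_X$ are null-homotopies for $\partial_{\sigma(k)} W \cdot 1_X$, a routine extension of the argument of Lemma \ref{lemma:indept_homotopy_chern} (replacing one entry at a time) gives $\str(\lambda_{\sigma(1)}\cdots\lambda_{\sigma(n)}) = \str(\partial_{\sigma(1)} d_X \cdots \partial_{\sigma(n)} d_X)$ in $J_W$. Since adjacent transpositions generate $S_n$ and $\sgn$ is multiplicative, it suffices to prove that for any distinct $i,j \in \{1,\ldots,n\}$ and any product decomposition of the remaining $\partial_k d_X$'s into $B$ and $C$, one has $\str(B\{\partial_i d_X, \partial_j d_X\}C) \equiv 0$ in $J_W$.

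Differentiating the identity $d_X^2 = W$ twice yields
\[
\{\partial_i d_X, \partial_j d_X\} = \partial_i \partial_j W \cdot 1_X - \{d_X,\, \partial_i \partial_j d_X\},
\]
so the task splits into showing that each of the two resulting contributions vanishes in $J_W$. For the $\{d_X, \partial_i \partial_j d_X\}$ piece, let $N = \partial_i \partial_j d_X$ and consider the odd element $\Pi = B \cdot N \cdot C$; then $\str([d_X,\Pi]_{\mathrm{gr}}) = 0$ since graded commutators with $d_X$ always have zero supertrace. Expanding via the graded Leibniz rule and using $[d_X,\partial_k d_X]_{\mathrm{gr}} = \partial_k W$, every term other than $\pm\str(B\{d_X, N\} C)$ carries a factor $\partial_k W$ for some index $k \in \{1,\ldots,n\} \setminus \{i,j\}$, which places $\str(B\{d_X, N\} C)$ inside the Jacobian ideal $(\partial_1 W,\ldots,\partial_n W)$.

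The main obstacle is the first term $\partial_i \partial_j W \cdot \str(BC)$, since $\partial_i \partial_j W$ can represent a nonzero class in $J_W$. I would handle this via the auxiliary claim that any \emph{incomplete} supertrace $\str(\partial_{k_1} d_X \cdots \partial_{k_m} d_X)$ with $m < n$ already lies in $(\partial_1 W, \ldots, \partial_n W)$. By nondegeneracy of the residue pairing it suffices to show that $\Res[g \cdot \str(\partial_{k_1} d_X \cdots \partial_{k_m} d_X)\,\ud \bold{x} \,/\, \partial_1 W, \ldots, \partial_n W] = 0$ for every $g \in R$. I would pick $\ell \in \{1,\ldots,n\} \setminus \{k_1,\ldots,k_m\}$, available since $m < n$, and apply the transition formula used in Lemma \ref{lemma:indept_homotopy_chern} to replace the denominator $\partial_\ell W$ by $(\partial_\ell W)^2$ at the cost of multiplying the numerator by $\partial_\ell W$. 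Using $\partial_\ell W = \{d_X, \partial_\ell d_X\}$ and applying $\str(\{d_X, \partial_\ell d_X \cdot \partial_{k_1} d_X \cdots \partial_{k_m} d_X\}) = 0$ expanded via Leibniz produces
\[
\partial_\ell W \cdot \str(\partial_{k_1} d_X \cdots \partial_{k_m} d_X) = \sum_{s=1}^{m} (-1)^{s-1} \partial_{k_s} W \cdot \str\big(\partial_\ell d_X \cdot \partial_{k_1} d_X \cdots \widehat{\partial_{k_s} d_X} \cdots \partial_{k_m} d_X\big),
\]
and each summand contributes $0$ to the residue because the numerator factor $\partial_{k_s} W$ (with $k_s \ne \ell$) belongs to the ideal generated by the denominator sequence. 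Applying the auxiliary claim to $\str(BC)$, whose $n-2$ indices omit both $i$ and $j$, completes the proof.
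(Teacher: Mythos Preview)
Your argument is correct, but it follows a different route from the paper's proof. The paper works directly with arbitrary null-homotopies $\lambda_i$ and never reduces to $\partial_i d_X$: it pairs $\str(\lambda_1\lambda_2\lambda_3\cdots\lambda_n)$ with an arbitrary $g$, raises the denominator $\partial_1 W$ to $(\partial_1 W)^2$, inserts $\partial_1 W = \lambda_1 d_X + d_X\lambda_1$ \emph{between} the existing $\lambda_2$ and $\lambda_3$, and then slides the two copies of $d_X$ around the supertrace (past $\lambda_i$'s with $i\neq 1$, which is legitimate modulo terms killed by the denominator) until they recombine as $d_X\lambda_1+\lambda_1 d_X=\partial_1 W$ on the far left, now sitting in front of $\lambda_2\lambda_1\lambda_3\cdots\lambda_n$; lowering the denominator again yields the sign flip in one stroke.

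Your approach instead (i) reduces to $\lambda_i=\partial_i d_X$ via Lemma~\ref{lemma:indept_homotopy_chern}, (ii) invokes the differentiated relation $\{\partial_i d_X,\partial_j d_X\}=\partial_i\partial_j W-\{d_X,\partial_i\partial_j d_X\}$, (iii) disposes of the $\{d_X,\partial_i\partial_j d_X\}$ piece by the vanishing of $\str$ on graded commutators plus Leibniz, and (iv) disposes of the $\partial_i\partial_j W$ piece by proving the ``incomplete supertrace'' vanishing, which is exactly the content of the paper's next lemma (Lemma~\ref{lemma:nonfullstrvanishes}). So you have effectively proved Lemmas~\ref{eq:lemmaordchern} and~\ref{lemma:nonfullstrvanishes} together, at the cost of needing the reduction step and access to the second derivative $\partial_i\partial_j d_X$ (which is why the reduction is forced on you). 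The paper's single residue manipulation is slicker and treats general $\lambda_i$ uniformly; your argument is more modular and makes the role of the incomplete-supertrace vanishing explicit.
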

\begin{proof}
It is enough to prove that in $J_W$, $\str( \lambda_1 \lambda_2 \lambda_3 \cdots \lambda_n ) = - \str( \lambda_2 \lambda_1 \lambda_3 \cdots \lambda_n )$ which we do using the technique of the previous lemma:
\begin{align*}
\langle \str( \lambda_1 \lambda_2 \lambda_3 \cdots \lambda_n ), g \rangle &= \Res \Big[ \begin{matrix} g \cdot \str( \lambda_1 \lambda_2 \lambda_3 \cdots \lambda_n ) \cdot \ud \bold{x} \;/\; \partial_1 W,\ldots \end{matrix} \Big]\\
&= \Res \Big[ \begin{matrix} g \cdot \str( \lambda_1 \lambda_2 \cdot \partial_1 W \cdot \lambda_3 \cdots \lambda_n ) \cdot \ud \bold{x} \;/\; (\partial_1 W)^2, \ldots \end{matrix} \Big]\\
&= \Res \Big[ \begin{matrix} g \cdot \str( \lambda_1 \lambda_2 ( \lambda_1 d_X + d_X \lambda_1 )  \lambda_3 \cdots \lambda_n ) \cdot \ud \bold{x} \;/\; (\partial_1 W)^2, \ldots \end{matrix} \Big].
\end{align*}
Anticommuting the $d_X$ in $\lambda_1 d_X$ to the right through various $\lambda_i$ with $i \neq 1$ and then cyclically to the start of the supertrace, and the $d_X$ in $d_X \lambda_1$ one spot to the left, this is
\begin{align*}
\quad &= - \Res \Big[ \begin{matrix} g \cdot \str( d_X \lambda_1 \lambda_2 \lambda_1 \lambda_3 \cdots \lambda_n ) \cdot \ud \bold{x} \;/\; (\partial_1 W)^2, \ldots \end{matrix} \Big]\\
&- \Res \Big[ \begin{matrix} g \cdot \str( \lambda_1 d_X \lambda_2 \lambda_1 \lambda_3 \cdots \lambda_n ) \cdot \ud \bold{x} \;/\; (\partial_1 W)^2, \ldots \end{matrix} \Big]\\
&= - \Res \Big[ \begin{matrix} g \cdot \str( (d_X \lambda_1 + \lambda_1 d_X) \cdot \lambda_2 \lambda_1 \lambda_3 \cdots \lambda_n ) \cdot \ud \bold{x} \;/\; (\partial_1 W)^2, \ldots \end{matrix} \Big]\\
&= - \Res \Big[ \begin{matrix} g \cdot \str( \lambda_2 \lambda_1 \lambda_3 \cdots \lambda_n ) \cdot \ud \bold{x} \;/\; \partial_1 W, \ldots \end{matrix} \Big]\\
&= - \langle \str( \lambda_2 \lambda_1 \lambda_3 \cdots \lambda_n ), g \rangle
\end{align*}
as claimed.
\end{proof}

The supertrace of a product of null-homotopies which is not ``full'' must be zero.

\begin{lemma}\label{lemma:nonfullstrvanishes} Let $\{ i_1, \ldots, i_p \}$ be a proper subset of $\{ 1, \ldots, n \}$. Then $\str( \lambda_{i_1} \cdots \lambda_{i_p} ) = 0$ in $J_W$.
\end{lemma}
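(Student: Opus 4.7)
The plan is to mimic the proof of Lemma \ref{eq:lemmaordchern}: pair $\str(\lambda_{i_1} \cdots \lambda_{i_p})$ with an arbitrary $g \in J_W$ via the residue pairing, show the result vanishes, and invoke nondegeneracy. Choose an index $j \in \{1,\ldots,n\} \setminus \{i_1,\ldots,i_p\}$, which exists by hypothesis.

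Using the trick of promoting $\partial_j W$ from the denominator,
\[
\langle \str(\lambda_{i_1} \cdots \lambda_{i_p}), g \rangle = \Res\Bigg[\begin{matrix} g \cdot \partial_j W \cdot \str(\lambda_{i_1} \cdots \lambda_{i_p}) \cdot \ud\bold{x} \;/\; (\partial_j W)^2, \partial_1 W, \ldots, \widehat{\partial_j W}, \ldots, \partial_n W\end{matrix}\Bigg],
\]
and then substituting $\partial_j W = \lambda_j d_X + d_X \lambda_j$ inside the supertrace, I would obtain two terms. The key point is that since $i_k \neq j$ for every $k$, the relations $\partial_{i_k} W = d_X \lambda_{i_k} + \lambda_{i_k} d_X$ give genuine anticommutation of $d_X$ and $\lambda_{i_k}$ in this residue (because $\partial_{i_k} W$ now sits in the denominator ideal and therefore may be set to zero inside the residue).

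In the term $\str(\lambda_j d_X \lambda_{i_1} \cdots \lambda_{i_p})$ I would push $d_X$ to the right through all the $\lambda_{i_k}$'s, accumulating a sign $(-1)^p$. In the term $\str(d_X \lambda_j \lambda_{i_1} \cdots \lambda_{i_p})$ I would instead use graded cyclicity of the supertrace to bring $d_X$ all the way around to the end, accumulating a sign $(-1)^{p+1}$ since $\lambda_j \lambda_{i_1} \cdots \lambda_{i_p}$ has total degree $p+1$. The two resulting multiples of $\str(\lambda_j \lambda_{i_1} \cdots \lambda_{i_p} d_X)$ have opposite signs and cancel, so the residue vanishes.

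This gives $\langle \str(\lambda_{i_1}\cdots \lambda_{i_p}), g\rangle = 0$ for every $g \in J_W$, and by nondegeneracy of the residue pairing the class $\str(\lambda_{i_1}\cdots \lambda_{i_p})$ is zero in $J_W$. No genuine obstacle arises; the only delicate point is to verify the sign bookkeeping carefully (and to be consistent about the supertrace cyclicity $\str(AB) = (-1)^{|A||B|}\str(BA)$), since this is exactly what makes the two terms cancel rather than add.
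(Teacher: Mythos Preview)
Your proposal is correct and follows essentially the same approach as the paper's own proof: pair with an arbitrary $g$, promote a missing $\partial_j W$ in the denominator to a square, substitute $\partial_j W = \lambda_j d_X + d_X \lambda_j$ inside the supertrace, and use anticommutation of $d_X$ with the remaining $\lambda_{i_k}$ (valid because each $\partial_{i_k} W$ is still in the denominator) together with graded cyclicity to make the two terms cancel. The only cosmetic difference is that the paper inserts $\lambda_j d_X + d_X \lambda_j$ at the right end of the product and ends up with the coefficient $1 + (-1)^{p+1}$, disposing of odd $p$ by noting the supertrace of an odd operator is zero; your placement at the front gives $(-1)^p + (-1)^{p+1} = 0$ directly, which is slightly cleaner.
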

\begin{proof}
Without loss of generality assume $1 \notin \{ i_1, \ldots, i_p \}$. Given $g \in J_W$ we have
\begin{align*}
\langle \str(\lambda_{i_1} \cdots \lambda_{i_p}), g\rangle &= \Res \Big[ \begin{matrix} g \cdot \str( \lambda_{i_1} \cdots \lambda_{i_p} ) \cdot \ud \bold{x} \;/\; \partial_1 W, \ldots \end{matrix} \Big]\\
&= \Res \Big[ \begin{matrix} g \cdot \str( \lambda_{i_1} \cdots \lambda_{i_p} \cdot \partial_1 W) \cdot \ud \bold{x} \;/\; (\partial_1 W)^2, \ldots \end{matrix} \Big]\\
&= \Res \Big[ \begin{matrix} g \cdot \str( \lambda_{i_1} \cdots \lambda_{i_p} (\lambda_1 d_X + d_X \lambda_1) ) \cdot \ud \bold{x} \;/\; (\partial_1 W)^2, \ldots \end{matrix} \Big],
\end{align*}
and if we commute the $d_X$ in $\lambda_1 d_X$ around the supertrace in the fashion of the proof of the previous lemma (using the fact that $\lambda_1$ does not appear among the $\lambda_i$'s) we find that
\begin{align*}
&= \Res \Big[ \begin{matrix} g \cdot \str( \lambda_{i_1} \cdots \lambda_{i_p} \cdot d_X \lambda_1 ) \cdot \ud \bold{x} \;/\; (\partial_1 W)^2, \ldots \end{matrix} \Big]\\
&\qquad + (-1)^{p+1} \Res \Big[ \begin{matrix} g \cdot \str( \lambda_{i_1} \cdots \lambda_{i_p} \cdot d_X \lambda_1 ) \cdot \ud \bold{x} \;/\; (\partial_1 W)^2, \ldots \end{matrix} \Big]
\end{align*}
which is zero (we can assume that $p$ is even).
\end{proof}

\section{Connections and residues}\label{section:derhamsplit}

In this appendix we construct connections on complete rings, as a complement to our discussion of residues in Section \ref{section:residuesandtraces}. Let $S$ be a ring, $R$ a $S$-algebra and $\bold{t} = \{t_1,\ldots,t_n\}$ a quasi-regular sequence in $R$ such that $R/\bold{t} R$ is a finitely generated projective $S$-module and $R$ is separated and complete in the $\bold{t} R$-adic topology. We denote by $S[\bold{t}] = S[t_1,\ldots,t_n]$ the formal polynomial ring in the $t_i$, and make $R$ into a $S[\bold{t}]$-algebra in the natural way. Let us fix a $S$-linear section
\[
\sigma: P = R/\bold{t} R \lto R
\]
of the quotient map $\pi: R \lto P$. By extension of scalars there is a $S\llbracket \bold{t}\rrbracket$-linear map
\[
\sigma^*: P \otimes_S S\llbracket \bold{t}\rrbracket \lto R
\]
and one can show that this is a bijection \cite[(3.3.2)]{Lipman87}. This follows from quasi-regularity of $\bold{t}$ and the fact that $R$ is separated and complete in its $\bold{t}R$-adic topology, since together these imply that there exists for every $M \in \mathbb{N}^n$ and $r \in R$ unique residue classes $r_M \in R/\bold{t} R$ with
\begin{equation}\label{eq:expansion_r_sigma}
r = \sum_M \sigma(r_M) t^M.
\end{equation}
Hence we can identify $R$ as a $S$-module with $\prod_M P$ and from this one deduces that $\sigma^*$ is a bijection. It follows that $R$ is a finitely generated projective $S\llbracket \bold{t}\rrbracket$-module. If we denote by $\omega$ the canonical flat $S$-linear connection on $S\llbracket \bold{t}\rrbracket$ as a $S[\bold{t}]$-module
\begin{gather*}
\omega: S\llbracket \bold{t}\rrbracket \lto S\llbracket \bold{t}\rrbracket \otimes_{S[\bold{t}]} \Omega^1_{S[\bold{t}]/S},\\
\omega( f ) = \sum_{j=1}^n \frac{\partial f}{\partial t_j} \otimes \ud t_j
\end{gather*}
then there is an induced $S$-linear connection on $R$. It is given explicitly by
\begin{gather}
\nabla^0: R \lto R \otimes_{S[\bold{t}]} \Omega^1_{S[\bold{t}]/S},\\
\nabla^0(r) = \sum_{j=1}^n \sum_{M \in \mathbb{N}^n} M_j \sigma(r_M) \cdot t^{M-e_j} \otimes \ud t_j \label{eq:nabla_complete_ring_app}
\end{gather} 
where we write $t^M = t_1^{M_1} \cdots t_n^{M_n}$ and for a tuple $M$ involving negative integers we set $t^M = 0$. It is routine using (\ref{eq:nabla_complete_ring_app}) to check that $\nabla^0$ is flat, and since $\nabla^0 \sigma = 0$ it is also clear that $\Ker(\nabla^0) + \bold{t} R = R$. Note that we do not assume that $\sigma(1) = 1$, although such a section can be found \cite[(3.3.3)]{Lipman87}. For the next lemma, let $\delta$ denote the Koszul differential on the exterior algebra $R \otimes_{S[\bold{t}]} \Omega_{S[\bold{t}]/S}$ as defined in Section \ref{section:contractingkoszul}. 

\begin{lemma} If $S$ is a $\mathbb{Q}$-algebra then $\Im(\delta \nabla^0) = \bold{t} R$, and for any integer $p > 0$ the map $p \cdot 1_R + \delta \nabla^0$ is a $S$-linear bijection on $R$ identifying $\bold{t} R$ with $\bold{t} R$. Hence in this case, $\nabla^0$ is standard.
\end{lemma}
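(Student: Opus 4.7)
The whole lemma will follow from a single clean calculation: expressing the composite $\delta \nabla^0$ diagonally in the ``monomial basis'' provided by the section $\sigma$. Concretely, every $r \in R$ has a unique expansion $r = \sum_M \sigma(r_M) t^M$ with $r_M \in P = R/\bold{t} R$ (this is (\ref{eq:expansion_r_sigma}) and the bijectivity of $\sigma^\ast$ explained above). Plugging this expansion into formula (\ref{eq:nabla_complete_ring_app}) gives
\[
\nabla^0(r) = \sum_{j=1}^n \sum_M M_j\, \sigma(r_M) \, t^{M-e_j} \otimes \ud t_j,
\]
and applying the Koszul contraction $\delta$, which sends $\ud t_j$ to $t_j$, yields
\[
\delta \nabla^0(r) = \sum_{j=1}^n \sum_M M_j\, \sigma(r_M)\, t^M = \sum_M |M|\, \sigma(r_M)\, t^M,
\]
where $|M| = M_1 + \cdots + M_n$. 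So $\delta \nabla^0$ acts by multiplying the $M$-th ``coefficient'' $\sigma(r_M)$ by the nonnegative integer $|M|$.

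From this explicit formula the two statements are essentially arithmetic. First, every summand in $\delta \nabla^0(r)$ has $|M| \geq 1$, so $\Im(\delta \nabla^0) \subseteq \bold{t} R$; conversely, any $s \in \bold{t} R$ has expansion $s = \sum_{M \neq 0} \sigma(s_M) t^M$ (since $s_0 = \pi(s) = 0$), and because $S$ is a $\mathbb{Q}$-algebra we may set $r_M = s_M / |M|$ for $M \neq 0$ to obtain an element $r \in R$ (using completeness) with $\delta \nabla^0(r) = s$. This proves $\Im(\delta \nabla^0) = \bold{t} R$. Second, for $p > 0$,
\[
(p\cdot 1_R + \delta \nabla^0)(r) = \sum_M (p + |M|)\, \sigma(r_M)\, t^M,
\]
and each scalar $p + |M|$ is a positive integer, hence invertible in $S$. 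Thus this operator is diagonal in the $\sigma$-expansion with invertible eigenvalues, so it is an $S$-linear bijection on $R$. Its action on the constant term is multiplication by $p$, so it preserves the subspace defined by $r_0 = 0$ (namely $\bold{t} R$) and in fact identifies $\bold{t} R$ with itself.

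The only mild obstacle is justifying that the formally defined element $r = \sum_M \sigma(r_M / (p+|M|)) t^M$ genuinely lives in $R$ rather than in some completion, but this is immediate from the bijectivity of $\sigma^\ast$ (equivalently, from the $\bold{t} R$-adic completeness and separatedness of $R$), and there are no subtleties beyond this. The last assertion that $\nabla^0$ is standard is then just the definition unpacked.
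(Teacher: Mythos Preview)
Your proof is correct and follows essentially the same approach as the paper: both compute that $\delta\nabla^0$ acts diagonally on the expansion $r = \sum_M \sigma(r_M) t^M$ with eigenvalue $|M|$, and then invert termwise using that $S$ is a $\mathbb{Q}$-algebra. The paper's proof is terser (it writes down the preimage $\sum_{M\neq 0} |M|^{-1}\sigma(r_M)t^M$ directly and says ``a similar argument proves the other claims''), but your version simply unpacks the same computation in full.
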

\begin{proof}
It is clear that $\Im(\delta \nabla^0) \subseteq \bold{t} R$. To see that $\delta \nabla^0$ is onto $\bold{t} R$ let $r \in \bold{t} R$ be given. Since $(t_i)_0 = 0$ we have $r_0 = 0$, and hence $\sum_{M \neq 0} |M|^{-1} \sigma(r_M) t^M$ maps to $r$ under $\delta \nabla^0$. A similar argument proves the other claims.
\end{proof}

\subsection{Residues}\label{section:lipmanres}

We keep the above notation, but we drop the assumption that $R$ is separated and complete in the $\bold{t} R$-adic topology and let $\nabla^0$ denote any flat $S$-linear connection on $R$ as a $S[\bold{t}]$-module satisfying $\Ker(\nabla^0) + \bold{t} R = R$. It follows from this hypothesis that there is a $S$-linear section $\sigma$ of $\pi: R \lto R/\bold{t} R$ such that $\nabla^0 \sigma = 0$, and we fix such a section. With $I = \bold{t} R$ it is clear that for $t > 0$
\[
\nabla^0( I^{t+1} ) \subseteq I^{t} \cdot (R \otimes_{S[\bold{t}]} \Omega^1_{S[\bold{t}]/S}),
\]
and hence $\nabla^0$ naturally extends to a flat $S$-linear connection on the $I$-adic completion $\widehat{R}$
\begin{equation}\label{eq:nabla_0_hat}
\widehat{\nabla}^0: \widehat{R} \lto \widehat{R} \otimes_{S[\bold{t}]} \Omega^1_{S[\bold{t}]/S}
\end{equation}
making the following diagram with canonical columns commute:
\[
\xymatrix@C+1pc{
R \ar[d]\ar[r]^(0.35){\nabla^0} & R \otimes_{S[\bold{t}]} \Omega^1_{S[\bold{t}]/S} \ar[d]\\
\widehat{R} \ar[r]_(0.35){\widehat{\nabla}^0} & \widehat{R} \otimes_{S[\bold{t}]} \Omega^1_{S[\bold{t}]/S}.
}
\]
The section $\sigma$ composes with the canonical map $R \lto \widehat{R}$ to give a section of $\widehat{R} \lto \widehat{R}/\bold{t} \widehat{R} \cong R/\bold{t} R$, which determines a flat $S$-linear connection on the completion, as in (\ref{eq:nabla_complete_ring_app}). We claim that this agrees with the connection (\ref{eq:nabla_0_hat}), so that after passing to the completion a connection on $R$ is completely determined by the associated section of $\pi$.

\begin{lemma}\label{lemma:lipmanresdiagram} For $r \in \widehat{R}$ we have
\begin{equation}\label{eq:lipmanresdiagram}
\widehat{\nabla}^0(r) = \sum_{j=1}^n \sum_{M \in \mathbb{N}^n} M_j \sigma(r_M) \cdot t^{M-e_j} \otimes \ud t_j
\end{equation}
\end{lemma}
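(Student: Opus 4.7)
The plan is to express any $r \in \widehat{R}$ via the $\sigma$-expansion, use continuity of $\widehat{\nabla}^0$ to commute it past the infinite sum, and then reduce to a termwise computation using the Leibniz rule and the hypothesis $\nabla^0 \sigma = 0$.

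First I would recall from the paragraph preceding \eqref{eq:expansion_r_sigma} (applied to $\widehat{R}$, which is separated and complete in its $\bold{t}\widehat{R}$-adic topology) that the map $P \cotimes{S} S\llbracket \bold{t}\rrbracket \to \widehat{R}$ induced by $\sigma$ is a bijection. Hence every $r \in \widehat{R}$ has a unique expansion $r = \sum_{M \in \mathbb{N}^n} \sigma(r_M) t^M$ with $r_M \in P$, and the partial sums $r^{(N)} = \sum_{|M|\le N} \sigma(r_M) t^M$ lie in $R$ and converge to $r$ in the $\bold{t}\widehat{R}$-adic topology.

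Next I would justify that $\widehat{\nabla}^0$ is continuous for the $\bold{t}$-adic topologies. From the Leibniz rule one has $\nabla^0(I^{t+1}) \subseteq I^t \cdot (R \otimes_{S[\bold{t}]} \Omega^1_{S[\bold{t}]/S})$, as already observed in the discussion preceding \eqref{eq:nabla_0_hat}, so $\nabla^0$ is continuous and its completion $\widehat{\nabla}^0$ commutes with $\bold{t}$-adic limits. In particular
\[
\widehat{\nabla}^0(r) \;=\; \lim_{N \to \infty} \nabla^0(r^{(N)}) \;=\; \sum_{M \in \mathbb{N}^n} \nabla^0(\sigma(r_M) t^M),
\]
the last sum being $\bold{t}$-adically convergent in $\widehat{R} \otimes_{S[\bold{t}]} \Omega^1_{S[\bold{t}]/S}$.

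Finally, for each $M$ I would apply the Leibniz rule $\nabla^0(f \cdot s) = f \nabla^0(s) + s \otimes \ud f$ with $f = t^M \in S[\bold{t}]$ and $s = \sigma(r_M) \in R$, obtaining
\[
\nabla^0(\sigma(r_M) t^M) \;=\; t^M \nabla^0(\sigma(r_M)) + \sigma(r_M) \otimes \ud(t^M).
\]
The first summand vanishes by the hypothesis $\nabla^0 \sigma = 0$, while $\ud(t^M) = \sum_{j=1}^n M_j t^{M - e_j} \ud t_j$ by the standard formula for the K\"ahler differential. Substituting and reindexing yields \eqref{eq:lipmanresdiagram}. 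The only real subtlety, and what I expect to require the most care, is the passage to the limit in step two: one must check that $\widehat{\nabla}^0$ is genuinely $\bold{t}$-adically continuous and that the target module is separated, which is why we work in $\widehat{R}$ rather than $R$ itself.
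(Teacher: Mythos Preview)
Your proof is correct and follows the same approach as the paper: expand $r$ via $\sigma$, apply the Leibniz rule termwise, and use $\nabla^0\sigma = 0$. The paper's proof is a one-line sketch that omits the continuity justification you supply in step two, so your version is strictly more detailed but not different in substance.
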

\begin{proof}
This follows by applying $\widehat{\nabla}^0$ to the expansion (\ref{eq:expansion_r_sigma}) of $r$ in terms of $\sigma$ and using the Leibniz rule, together with $\nabla^0 \sigma = 0$.
\end{proof}


We are now prepared to explain how the residue of Definition \ref{defn:residuesymbol} fits into the theory developed in \cite{Lipman87}. Set $P = R/\bold{t} R$ and $E = \Hom_{S}(P, P)$. Given $r \in R$, Lipman defines in \cite[(3.7)]{Lipman87} an operator-valued power series 
\[
r^{\#} = \sum_M r^{\#}_M t^M \in E\llbracket \bold{t}\rrbracket
\]
where $r^{\#}_M \in E$ sends a residue class $p \in P$ to $( r \sigma(p) )_M$, using the notation of (\ref{eq:expansion_r_sigma}). Recall that we have a $S$-linear operator $\partial_i = \partial/\partial t_i$ on $R$ determined by $\nabla^0$. Applying the projection $(\ud t_i)^*$ to (\ref{eq:lipmanresdiagram}) yields the following lemma.

\begin{lemma}\label{lemma:lipmanagreeminor} For $1 \le i \le n$ we have $r^{\#}_{e_i} = \pi \circ \partial_i \circ r \circ \sigma$.
\end{lemma}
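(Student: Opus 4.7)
The plan is to unfold both sides on an arbitrary residue class $p \in P$ and compare. By definition $r^{\#}_{e_i}(p) = (r\sigma(p))_{e_i}$, the coefficient of $t^{e_i}$ in the $\sigma$-expansion of the element $r\sigma(p) \in R \subseteq \widehat{R}$. So the statement reduces to showing
\[
(r\sigma(p))_{e_i} = \pi \partial_i (r\sigma(p)),
\]
which is a purely computational identity in the completion $\widehat{R}$.

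First I would apply Lemma \ref{lemma:lipmanresdiagram} to the element $r\sigma(p)$, obtaining
\[
\widehat{\nabla}^0( r\sigma(p) ) = \sum_{j=1}^n \sum_{M \in \mathbb{N}^n} M_j \, \sigma\!\big((r\sigma(p))_M\big) \cdot t^{M-e_j} \otimes \ud t_j.
\]
Projecting onto the $\ud t_i$ component via $(\ud t_i)^*$ gives the formula
\[
\partial_i( r\sigma(p) ) = \sum_{M \in \mathbb{N}^n} M_i \, \sigma\!\big((r\sigma(p))_M\big) \cdot t^{M-e_i}.
\]
Next I would apply $\pi: \widehat{R} \lto P$, which kills every $t^N$ with $N \neq 0$. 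In the displayed sum the only surviving term is the one with $M - e_i = 0$, i.e. $M = e_i$, where $M_i = 1$. Hence
\[
\pi \partial_i ( r\sigma(p) ) = \pi \sigma\!\big((r\sigma(p))_{e_i}\big) = (r\sigma(p))_{e_i},
\]
the last equality because $\sigma$ is a section of $\pi$.

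Combining the two displays yields $\pi \partial_i ( r\sigma(p) ) = r^{\#}_{e_i}(p)$ for all $p \in P$, which is exactly the claimed identity of elements of $E$. There is no real obstacle; the only thing to be careful about is that the expansion coefficients $(r\sigma(p))_M$ live in $P$ so that $\pi\sigma$ applied to them is the identity, which is ensured by our standing choice of section.
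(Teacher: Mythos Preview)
Your argument is correct and is exactly the approach indicated in the paper: apply the projection $(\ud t_i)^*$ to the expansion \eqref{eq:lipmanresdiagram} for the element $r\sigma(p)$, then reduce modulo $\bold{t}R$ and use $\pi\sigma = 1$. You have simply written out in full the details the paper leaves implicit.
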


\begin{proposition}\label{prop:agreementlipman} The residue symbol of Definition \ref{defn:residuesymbol} agrees with the residue symbol of Lipman \cite[(3.7)]{Lipman87}, that is, for $s, r_1, \ldots, r_n \in R$,
\[
\sum_{\tau \in S_n} \sgn(\tau) \tr_S\big( s (r_1)^{\#}_{e_{\tau(1)}} \cdots (r_n)^{\#}_{e_{\tau(n)}} \big) = \tr_S\big( s [\nabla, r_1] \cdots [\nabla, r_n] \big).
\]
\end{proposition}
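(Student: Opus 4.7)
The plan is to reduce the identity to Lemma \ref{lemma:reformulateressym} and Lemma \ref{lemma:lipmanagreeminor}, so that what remains is an identification of two $S$-linear endomorphisms of $P = R/\bold{t} R$. First I would apply Lemma \ref{lemma:reformulateressym} to rewrite the right hand side as
\[
\sum_{\tau \in S_n} \sgn(\tau) \tr_S\big( s [\partial_{\tau(1)}, r_1] \cdots [\partial_{\tau(n)}, r_n] \big),
\]
where each $[\partial_j, r]$ is viewed as the induced $S$-linear endomorphism of $P$ (recall from Lemma \ref{lemma:reformulateressym} that $[\partial_j, r]$ is $S[\bold{t}]$-linear on $R$, hence descends). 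In this form the claim becomes a statement about the operators, independent of $s$ and the trace.

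Next, using the hypothesis $\Ker(\nabla^0) + \bold{t} R = R$ I would choose the section $\sigma: P \lto R$ so that $\nabla^0 \sigma = 0$; equivalently $\partial_j \sigma = 0$ for all $j$. (This is the same section used implicitly in the definition of $r^\#$, and indeed in the proof of Lemma \ref{lemma:lipmanagreeminor}.) For any $r \in R$ and $p \in P$ one then has
\[
[\partial_j, r]\,\sigma(p) \;=\; \partial_j(r\sigma(p)) - r \partial_j(\sigma(p)) \;=\; \partial_j(r\sigma(p)),
\]
so the induced map of $[\partial_j, r]$ on $P$ equals $\pi \circ \partial_j \circ r \circ \sigma$, which by Lemma \ref{lemma:lipmanagreeminor} is precisely $r^\#_{e_j}$.

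The remaining step is to check that composition commutes with passage to induced maps. Because each commutator $[\partial_{\tau(k)}, r_k]$ is $S[\bold{t}]$-linear on $R$, it preserves $\bold{t} R$, and so the composition $[\partial_{\tau(1)}, r_1] \cdots [\partial_{\tau(n)}, r_n]$ is itself $S[\bold{t}]$-linear and its induced map on $P$ agrees with the composition of the induced maps. Together with the previous paragraph this yields
\[
[\partial_{\tau(1)}, r_1] \cdots [\partial_{\tau(n)}, r_n] \;=\; (r_1)^\#_{e_{\tau(1)}} \circ \cdots \circ (r_n)^\#_{e_{\tau(n)}}
\]
as $S$-linear endomorphisms of $P$. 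Premultiplying by the multiplication operator $s$ on $P$ and taking $\tr_S$ then gives the desired equality term by term in $\tau$.

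The only potentially subtle point is the last one: \emph{a priori} a composition of maps of the form $\pi \circ (-) \circ \sigma$ on $P$ is not given by $\pi \circ (-)(-) \circ \sigma$, because $\sigma \pi \neq 1_R$. The reconciliation is exactly $S[\bold{t}]$-linearity of each commutator, which ensures that the difference between $\sigma \pi$ and the identity is absorbed into $\bold{t} R$ and killed by the next projection. Beyond this bookkeeping, the proof is formal.
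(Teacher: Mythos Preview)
Your proof is correct and follows exactly the approach indicated in the paper: reduce via Lemma~\ref{lemma:reformulateressym} and then identify the operators term by term using Lemma~\ref{lemma:lipmanagreeminor}. The additional details you supply (that $\partial_j\sigma=0$ forces the induced map of $[\partial_j,r]$ on $P$ to be $\pi\circ\partial_j\circ r\circ\sigma$, and that $S[\bold{t}]$-linearity makes composition compatible with passage to $P$) are precisely what the paper's one-line proof leaves implicit.
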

\begin{proof}
In view of Lemma \ref{lemma:reformulateressym} this follows from Lemma \ref{lemma:lipmanagreeminor}.
\end{proof}

It is clear from the development in \cite{Lipman87} that Lipman's residue symbol does not depend on the choice of section $\sigma$, and it follows that the residue symbol of Definition \ref{defn:residuesymbol} does not depend on the choice of connection $\nabla^0$.

\bibliographystyle{amsalpha}
\providecommand{\bysame}{\leavevmode\hbox to3em{\hrulefill}\thinspace}
\providecommand{\href}[2]{#2}

\end{document}